\chardef\forshowkeys=0
\chardef\showllabel=0
\chardef\refcheck=0
\chardef\sketches=0
\chardef\showfont=0         
\author[M.S.~Ayd{\i}n]{Mustafa Sencer Ayd{\i}n}
\address{Department of Mathematics, University of Southern California, Los Angeles, CA 90089}
\email{maydin@usc.edu}
\title[The inviscid limit for the Navier-Stokes equations with Navier boundary conditions]{Uniform bounds and the inviscid limit for the Navier-Stokes equations with Navier boundary conditions}
\author[I.~Kukavica]{Igor Kukavica}
\address{Department of Mathematics\\
 University of Southern California\\
 Los Angeles, CA 90089}
\email{kukavica@usc.edu}
\begin{document}
 
\def\inprogress{{\colg IN PROGRESS} }
\def\bnew{\colr {}}
\def\enew{\colb {}}
\def\bold{\colu {}}
\def\eold{\colb{}}
\def\YY{X}
\def\OO{\mathcal O}
\def\SS{\mathbb S}
\def\CC{\mathbb C}
\def\RR{\mathbb R}
\def\TT{\mathbb T}
\def\ZZ{\mathbb Z}
\def\HH{\mathbb H}
\def\RSZ{\mathcal R}
\def\LL{\mathcal L}
\def\SL{\LL^1}
\def\ZL{\LL^\infty}
\def\GG{\mathcal G}
\def\tt{\langle t\rangle}
\def\erf{\mathrm{Erf}}
\def\mgt#1{\textcolor{magenta}{#1}}
\def\ff{\rho}
\def\gg{G}
\def\sqrtnu{\sqrt{\nu}}
\def\ww{w}
\def\ft#1{#1_\xi}
\def\ges{\gtrsim}
\renewcommand*{\Re}{\ensuremath{\mathrm{{\mathbb R}e\,}}}
\renewcommand*{\Im}{\ensuremath{\mathrm{{\mathbb I}m\,}}}

\ifnum\showllabel=1
\def\llabel#1{\marginnote{\color{gray}\rm(#1)}[-0.0cm]\notag}
\else
\def\llabel#1{\notag}
\fi

\newcommand{\norm}[1]{\left\|#1\right\|}
\newcommand{\nnorm}[1]{\lVert #1\rVert}
\newcommand{\abs}[1]{\left|#1\right|}
\newcommand{\NORM}[1]{|\!|\!| #1|\!|\!|}
\theoremstyle{plain}
\newtheorem{theorem}{Theorem}[section]
\newtheorem{Theorem}{Theorem}[section]
\newtheorem{corollary}[theorem]{Corollary}
\newtheorem{Corollary}[theorem]{Corollary}
\newtheorem{proposition}[theorem]{Proposition}
\newtheorem{Proposition}[theorem]{Proposition}
\newtheorem{Lemma}[theorem]{Lemma}
\newtheorem{lemma}[theorem]{Lemma}
\theoremstyle{definition}
\newtheorem{definition}{Definition}[section]
\newtheorem{remark}[Theorem]{Remark}
\def\theequation{\thesection.\arabic{equation}}
\numberwithin{equation}{section}
\definecolor{mygray}{rgb}{.6,.6,.6}
\definecolor{myblue}{rgb}{9, 0, 1}
\definecolor{colorforkeys}{rgb}{1.0,0.0,0.0}
\newlength\mytemplen
\newsavebox\mytempbox
\makeatletter
\newcommand\mybluebox{%
  \@ifnextchar[
  {\@mybluebox}%
  {\@mybluebox[0pt]}}
\def\@mybluebox[#1]{%
  \@ifnextchar[
  {\@@mybluebox[#1]}%
  {\@@mybluebox[#1][0pt]}}
\def\@@mybluebox[#1][#2]#3{
  \sbox\mytempbox{#3}%
  \mytemplen\ht\mytempbox
  \advance\mytemplen #1\relax
  \ht\mytempbox\mytemplen
  \mytemplen\dp\mytempbox
  \advance\mytemplen #2\relax
  \dp\mytempbox\mytemplen
  \colorbox{myblue}{\hspace{1em}\usebox{\mytempbox}\hspace{1em}}}
 \makeatother
\def\XX{{\mathcal X}}
\def\XXT{{\mathcal X}_T}
\def\XXTzero{{\mathcal X}_{T_0}}
\def\XXi{{\mathcal X}_\infty}
\def\YY{{\mathcal Y}}
\def\YYT{{\mathcal Y}_T}
\def\YYTzero{{\mathcal Y}_{T_0}}
\def\YYi{{\mathcal Y}_\infty}
\def\cc{\text{c}}
\def\rr{r}
\def\weaks{\text{\,\,\,\,\,\,weakly-* in }}
\def\inn{\text{\,\,\,\,\,\,in }}
\def\cof{\mathop{\rm cof\,}\nolimits}
\def\Dn{\frac{\partial}{\partial N}}
\def\Dnn#1{\frac{\partial #1}{\partial N}}
\def\tdb{\tilde{b}}
\def\tda{b}
\def\qqq{u}
\def\lat{\Delta_2}
\def\biglinem{\vskip0.5truecm\par==========================\par\vskip0.5truecm}
\def\inon#1{\hbox{\ \ \ \ \ \ \ }\hbox{#1}}                
\def\onon#1{\inon{on~$#1$}}
\def\inin#1{\inon{in~$#1$}}
\def\FF{F}
\def\andand{\text{\indeq and\indeq}}
\def\ww{w(y)}
\def\ll{{\color{red}\ell}}
\def\ee{\epsilon_0}
\def\startnewsection#1#2{ \section{#1}\label{#2}\setcounter{equation}{0}}   
\def\loc{\text{loc}}
\def\nnewpage{ }
\def\sgn{\mathop{\rm sgn\,}\nolimits}    
\def\Tr{\mathop{\rm Tr}\nolimits}    
\def\div{\mathop{\rm div}\nolimits}
\def\curl{\mathop{\rm curl}\nolimits}
\def\dist{\mathop{\rm dist}\nolimits}  
\def\supp{\mathop{\rm supp}\nolimits}
\def\indeq{\quad{}}           
\def\period{.}                       
\def\semicolon{\,;}                  
\def\colr{\color{red}}
\def\colrr{\color{black}}
\def\colb{\color{black}}
\def\coly{\color{lightgray}}
\definecolor{colorgggg}{rgb}{0.1,0.5,0.3}
\definecolor{colorllll}{rgb}{0.0,0.7,0.0}
\definecolor{colorhhhh}{rgb}{0.3,0.75,0.4}
\definecolor{colorpppp}{rgb}{0.7,0.0,0.2}
\definecolor{coloroooo}{rgb}{0.45,0.0,0.0}
\definecolor{colorqqqq}{rgb}{0.1,0.7,0}
\def\colg{\color{colorgggg}}
\def\collg{\color{colorllll}}
\def\cole{\color{colorpppp}}
\def\cole{\color{coloroooo}}
\def\coleo{\color{colorpppp}}
\def\cole{\color{black}}
\def\colu{\color{blue}}
\def\colc{\color{colorhhhh}}
\def\colW{\colb}   
\definecolor{coloraaaa}{rgb}{0.6,0.6,0.6}
\def\colw{\color{coloraaaa}}
\def\comma{ {\rm ,\qquad{}} }            
\def\commaone{ {\rm ,\quad{}} }          
\def\lec{\lesssim}
\def\nts#1{{\color{red}\hbox{\bf ~#1~}}} 
\def\ntsf#1{\footnote{\color{colorgggg}\hbox{#1}}}
\def\ntsik#1{{\color{purple}\hbox{\bf ~#1~}}} 
\def\blackdot{{\color{red}{\hskip-.0truecm\rule[-1mm]{4mm}{4mm}\hskip.2truecm}}\hskip-.3truecm}
\def\bluedot{{\color{blue}{\hskip-.0truecm\rule[-1mm]{4mm}{4mm}\hskip.2truecm}}\hskip-.3truecm}
\def\purpledot{{\color{colorpppp}{\hskip-.0truecm\rule[-1mm]{4mm}{4mm}\hskip.2truecm}}\hskip-.3truecm}
\def\greendot{{\color{colorgggg}{\hskip-.0truecm\rule[-1mm]{4mm}{4mm}\hskip.2truecm}}\hskip-.3truecm}
\def\cyandot{{\color{cyan}{\hskip-.0truecm\rule[-1mm]{4mm}{4mm}\hskip.2truecm}}\hskip-.3truecm}
\def\reddot{{\color{red}{\hskip-.0truecm\rule[-1mm]{4mm}{4mm}\hskip.2truecm}}\hskip-.3truecm}
\def\gdot{\greendot}
\def\tdot{\gdot}
\def\bdot{\bluedot}
\def\ydot{\cyandot}
\def\rdot{\reddot}
\def\fractext#1#2{{#1}/{#2}}
\def\ii{\hat\imath}
\def\fei#1{\textcolor{blue}{#1}}
\def\vlad#1{\textcolor{cyan}{#1}}
\def\igor#1{\text{{\textcolor{colorqqqq}{#1}}}}
\def\igorf#1{\footnote{\text{{\textcolor{colorqqqq}{#1}}}}}
\newcommand{\p}{\partial}
\newcommand{\UE}{U^{\rm E}}
\newcommand{\PE}{P^{\rm E}}
\newcommand{\KP}{K_{\rm P}}
\newcommand{\uNS}{u^{\rm NS}}
\newcommand{\vNS}{v^{\rm NS}}
\newcommand{\pNS}{p^{\rm NS}}
\newcommand{\omegaNS}{\omega^{\rm NS}}
\newcommand{\uE}{u^{\rm E}}
\newcommand{\vE}{v^{\rm E}}
\newcommand{\pE}{p^{\rm E}}
\newcommand{\omegaE}{\omega^{\rm E}}
\newcommand{\ua}{u_{\rm   a}}
\newcommand{\va}{v_{\rm   a}}
\newcommand{\omegaa}{\omega_{\rm   a}}
\newcommand{\ue}{u_{\rm   e}}
\newcommand{\ve}{v_{\rm   e}}
\newcommand{\omegae}{\omega_{\rm e}}
\newcommand{\omegaeic}{\omega_{{\rm e}0}}
\newcommand{\ueic}{u_{{\rm   e}0}}
\newcommand{\veic}{v_{{\rm   e}0}}
\newcommand{\up}{u^{\rm P}}
\newcommand{\vp}{v^{\rm P}}
\newcommand{\tup}{{\tilde u}^{\rm P}}
\newcommand{\bvp}{{\bar v}^{\rm P}}
\newcommand{\omegap}{\omega^{\rm P}}
\newcommand{\tomegap}{\tilde \omega^{\rm P}}
\renewcommand{\up}{u^{\rm P}}
\renewcommand{\vp}{v^{\rm P}}
\renewcommand{\omegap}{\Omega^{\rm P}}
\renewcommand{\tomegap}{\omega^{\rm P}}
\def\hh{\text{h}}
\def\cco{\text{co}}

\begin{abstract}
  We consider the
  vanishing viscosity problem for solutions of the Navier-Stokes equations
  with Navier boundary conditions in the half-space.
  We lower the currently known conormal regularity needed to establish that the inviscid limit holds.
  Our requirement for the Lipschitz initial data is that the first four conormal derivatives are bounded along with two for the gradient.
In addition,
we establish a new class of initial data for the local existence and
 uniqueness for the Euler equations in the half-space or a channel
 for initial data in the conormal space without conormal requirements
 on the gradient.
\end{abstract}

\maketitle

\startnewsection{Introduction}{sec.int}
We address the vanishing viscosity limit problem for the three-dimensional incompressible Navier-Stokes equations
 \begin{align}
  \partial_t u^\nu - \nu \Delta u^\nu + u^\nu \cdot \nabla u^\nu + \nabla p^\nu =0 
  \comma
  \nabla \cdot u^\nu = 0 \comma (x,t) \in \Omega \times (0,T)
  ,
  \label{NSE}
 \end{align}
with the Navier boundary conditions
 \begin{align}
  u^\nu\cdot n = 0 \comma 
  \left(\frac{1}{2}(\nabla u^\nu + \nabla^T u^\nu)\cdot n\right)_\tau
  =-\mu u^\nu_\tau
  \comma (x,t) \in \partial \Omega \times (0,T)
  ,
  \label{navierbdry}
 \end{align}
 where $\mu \in \mathbb{R}$ is a constant, $n$ is the outward unit normal vector, and 
 $v_\tau = v - (v \cdot n)n$
 is the tangential part of~$v$.
 It is a well-known problem whether the solutions $\{u^\nu\}_\nu$ for \eqref{NSE}
 converge to a solution $u$ for the Euler equations
 \begin{align}
  u_t + u\cdot \nabla u + \nabla p =0
  \comma \nabla \cdot u = 0
  \comma (x,t) \in \Omega \times (0,T)
  ,
  \label{euler}
 \end{align}
 with the slip boundary condition
 \begin{align}
  u \cdot n = 0 
  \text{ on } \partial \Omega \times (0,T)
,
  \label{eulerb}
 \end{align}
 as~$\nu \to 0$. 
 For the no-slip boundary condition 
 	$
  u^\nu = 0 
  \text{ on } \partial \Omega \times (0,T),
  $
 the inviscid limit problem 
 is to a large extent open due to the presence of strong boundary layers;
 see \cite{CL+,FTZ,GK+,GMM,K,Ke,KLS,KVW,M,MM,SC1,SC2,TW} to point out only few references.
 However, substantial progress has been made for the Navier-type boundary conditions~\eqref{navierbdry}.
 When $\Omega \subset \mathbb{R}^3$, this problem was first addressed
 by Iftimie and Planas in~\cite{IP}, where they
 consider \eqref{euler}--\eqref{eulerb} 
 with the initial data $u_0 \in H^s$ for $s > 2.5$,
 the condition that guarantees the local-in-time well-posedness of Euler equations.
 Then, by assuming the $L^2$ convergence of the initial 
 data $u_0^\nu$ for \eqref{NSE} to $u_0$, they prove
 the $L^\infty_t L^2_x$ convergence of the weak Leray solutions
 to the local strong solution of the Euler equations.
 A quantitative result is obtained by Gie and Kelliher in~\cite{GK},
 who considered
 the generalized slip condition
 \begin{align}
  u^\nu\cdot n = 0 \comma 
  \left(\frac{1}{2}(\nabla u^\nu + \nabla^T u^\nu)\cdot n\right)_\tau
  = (Au^\nu)_\tau
  \comma (x,t) \in \partial \Omega \times (0,T),
  \label{gnavierbdry}
 \end{align}
 where $A$ is a smooth symmetric tensor.
 Assuming that the initial data belongs to $H^5$,
 the authors establish the 
 $L^\infty_t L^2_x$ and $L^2_t H^1_x$ convergence 
 with the rates $\nu^\frac{3}{4}$ and $\nu^\frac{1}{4}$, respectively.
 These rates were improved in~\cite{XZ2}
 in the case of the generalized vorticity-slip condition
 \begin{align}
  u^\nu\cdot n = 0 \comma 
  (\curl u^\nu) \times n = (B u^\nu)_\tau  
  \comma (x,t) \in \partial \Omega \times (0,T),
  \llabel{gvslip}
 \end{align} 
 which is equivalent to 
 \eqref{gnavierbdry} for some smooth symmetric tensor~$A$.
 Their initial data belongs to $H^3$ and satisfies~\eqref{mrassumption}.
 Under these assumptions, the authors establish, for any $s>0$ and $2\le p < \infty$,
 the convergence in
$L^\infty_t L^2_x, L^\infty_t H^1_x, L^\infty_t W^{1,p}_x, L^\infty_{t,x}  
$,
 with the rates
$\nu^{1-s}, \nu^{\frac{1-s}{2}}, \nu^\frac{1-s}{p}, \nu^{\frac{2}{5}(1-s)}
$, respectively.
 We note that the inviscid limit problem
 has also been studied with several other Navier-type boundary conditions
 such as 
 \begin{equation}
  \label{other}
  u^\nu \cdot n = 0
  \comma
  \begin{cases}
   &(\curl u)\cdot n = 0 \\
   &(\curl u)\cdot n = 0 = \Delta u \times n  \\
   & (\curl u)_\tau = 0 \\
   & (\curl u) \times n = 0 \\
   &    ((-pI + \frac{\nu}{2}(\nabla u^\nu + \nabla^T u^\nu))\cdot n)\cdot \tau = 0
   ;
  \end{cases}
 \end{equation}
we refer the reader to~\cite{BdVC1,BdVC2,BdVC3,BdVC4,BS1,BS2,CQ,DN,IS,NP,TWZ,WXZ,X,XX,XZ1} for the corresponding results.

In~\cite{MR1},
 Masmoudi and Rousset
 derived uniform-in-$\nu$ estimates for the Navier-Stokes system  
 in the functional framework of Sobolev conormal spaces.
 In particular, they assumed that
 \begin{align}
  (u,\nabla u)|_{t=0}  \in H^7_\cco \times (H^6_\cco \cap W^{1,\infty}_\cco)
  ,\label{mrassumption}
 \end{align}
 (see the definitions~\eqref{con.def} and~\eqref{norm} below)
 and established the local well-posedness of Euler equations in the same functional setting.
 Moreover, the unique solution for \eqref{euler}--\eqref{eulerb}
 is realized as the $L^\infty_{x,t} \cap L^\infty_t L^2_x$ limit of $u^\nu$
 as~$\nu \to 0$. After establishing the uniform estimates, 
 they use a compactness argument to prove qualitatively that the inviscid limit holds.
 
 In this work, we lower the regularity assumption~\eqref{mrassumption} in~\cite{MR1}
 for the case of the three-dimensional half-space.
 In particular, we address the initial data 
 \begin{align}
  (u,\nabla u)|_{t=0}  \in (H^4_\cco \cap W^{2,\infty}_{\cco}) \times (H^2_\cco \cap L^\infty)
  \label{ourassumption}
 \end{align}
 and establish the vanishing viscosity limit for $\mu \in \mathbb{R}$; see Theorem~\ref{T02}.
Then, assuming $\mu\geq 0$, we obtain the inviscid limit under the condition
 \begin{align}
  (u,\nabla u)|_{t=0}  \in (H^4_\cco \cap W^{2,\infty}_{\cco}) \times (H^1_\cco \cap L^\infty)
  ;
  \label{ourassumption2}
 \end{align}
see Theorem~\ref{T03}(ii).
 To prove both of these results, we first obtain uniform-in-$\nu$ estimates, stated in Theorems~\ref{T01}
 and~\ref{T03}(i). 
 
 We now briefly describe the key steps allowing us to establish the inviscid limit with
 the data specified in \eqref{ourassumption} and~\eqref{ourassumption2}.
 First, we note that in \eqref{mrassumption} the condition $\nabla u|_{t=0} \in W^{1,\infty}_\cco$
 is required. To maintain this,
 it seems that we need at least six and five conormal derivatives on $u$ and $\nabla u$, respectively, 
 as demonstrated in~\cite{MR2} for the free boundary Navier-Stokes equations.
 Due to the derivative loss in Euler equations, the need to control $\nabla u \in W^{1,\infty}_\cco$ 
 arises from requiring $\nabla u \in H^{m-1}_\cco$ when $u \in H^m_\cco$.
 Namely, in~\cite{MR1}, the authors derive the uniform-in-$\nu$ estimate
 \begin{align}
  \frac{d}{dt}\Vert \nabla u^\nu\Vert_{H^{m-1}_\cco}^2
  \lec
  \Vert \nabla p^\nu\Vert_{H^{m-1}_\cco}\Vert \nabla u^\nu\Vert_{H^{m-1}_\cco}
  +(1+ \Vert \nabla u^\nu\Vert_{W^{1,\infty}_\cco})
  (\Vert \nabla u^\nu\Vert_{H^{m-1}_\cco}^2+\Vert u^\nu\Vert_{H^{m}_\cco}^2)
  ,\llabel{mr1}
 \end{align}
 for all integer $m \ge 7$, and it is challenging to 
 eliminate $\Vert \nabla u^\nu\Vert_{W^{1,\infty}_\cco}$ 
 without introducing more than six conormal derivatives of~$u^\nu$.
 We overcome this in Proposition~\ref{P.Nor} by establishing 
 an estimate that may be simplified to
 \begin{align}
  \frac{d}{dt}\Vert \nabla u^\nu \Vert_{H^m_\cco}^2
  \lec
  \Vert \nabla p^\nu\Vert_{H^m_\cco}\Vert \nabla u^\nu\Vert_{H^m_\cco}
  +  (1+\Vert u^\nu\Vert_{W^{2,\infty}_\cco}+\Vert \nabla u^\nu\Vert_{L^\infty})
  (\Vert \nabla u^\nu\Vert_{H^m_\cco}^2+\Vert u^\nu\Vert_{H^{m+2}_\cco}^2)
  ,
  \llabel{our1}
 \end{align}
 for $m=1,2$, which does not require $\nabla u^\nu \in W^{1,\infty}_\cco$. 
 However, we still need to ensure that $u^\nu \in H^4_\cco$ is bounded uniformly~in~$\nu$,
 which cannot be achieved by repeating the arguments in~\cite{MR1}. This leads to
 \begin{align}
  \frac{d}{dt}\Vert u^\nu\Vert_{H^{m}_\cco}^2
  \lec
  \Vert \nabla p^\nu\Vert_{H^{m-1}_\cco}\Vert u^\nu\Vert_{H^{m}_\cco}
  +(1+ \Vert \nabla u^\nu\Vert_{L^\infty})
  (\Vert \nabla u^\nu\Vert_{H^{m-1}_\cco}^2+\Vert u^\nu\Vert_{H^{m}_\cco}^2)
  ,\label{mr2}
 \end{align}
 for all integer~$m \ge 0$. In our case, \eqref{mr2} dictates $\nabla u^\nu \in H^3_\cco$,
 which is undesirable. For this reason, we revise the conormal estimates and obtain 
 \begin{align}
  \frac{d}{dt}\Vert u(t)\Vert_{4}^2
  \lec
  \Vert \nabla p^\nu\Vert_{H^{3}_\cco}\Vert u^\nu\Vert_{H^{4}_\cco}
  +
  (1+ \Vert \nabla u^\nu\Vert_{L^\infty}+ \Vert u^\nu\Vert_{W^{2,\infty}_\cco})
  \Vert u^\nu\Vert_{H^{4}_\cco}^2
  ;
  \label{our2}
 \end{align}
 for the precise statement, see Proposition~\ref{P.Con}.
 Observe that \eqref{our2} does not contain $\Vert \nabla u^\nu\Vert_{H^m_\cco}$, and
 this suggests that conormal derivatives of $u$ do not explicitly
 depend on conormal derivatives of~$\nabla u$. This
 dependence is only by the term $\Vert u^\nu\Vert_{W^{2,\infty}_\cco}$.
 Similarly, our pressure estimates \eqref{EQ.Pre} also remove the dependence between
 the conormal derivatives of $p$ and of $\nabla u$ present in~\cite{MR1}.
 We remark that all the results are possible by relying on $\Vert u\Vert_{W^{2,\infty}_\cco}$
 rather than $\Vert \nabla u^\nu\Vert_{H^m_\cco}$ or $\Vert \nabla u^\nu\Vert_{W^{1,\infty}_\cco}$.
 The requirement $u_0 \in W^{2,\infty}_\cco$ is much weaker than~\eqref{mrassumption}.
 Indeed, from \eqref{EQ.emb} we can obtain
 \begin{align}
  \Vert u\Vert_{W^{2,\infty}_\cco}^2
  \lec
  \Vert \nabla u\Vert_{H^{2+s_1}_\cco}\Vert u\Vert_{H^{2+s_2}_\cco}
  ,\llabel{hypo}
 \end{align}
 for $s_1+s_2 = 2+$. This means that we can control $\Vert u\Vert_{W^{2,\infty}_\cco}$
 with $\Vert u\Vert_{5}$ and~$\Vert \nabla u\Vert_{2}$. However, controlling 
 $\Vert u\Vert_{W^{2,\infty}_\cco}$
 under the assumptions \eqref{ourassumption} and \eqref{ourassumption2} requires a different approach.
 This is where the cases \eqref{ourassumption} and \eqref{ourassumption2} differ from each other
 depending on the sign of the friction parameter~$\mu$.
 
 After establishing the uniform estimates, we pass to the limit by showing that 
 $\{u^\nu\}$ is a Cauchy sequence in $L^\infty_{t,x} \cap L^\infty_t L^2_x$, from where we obtain
 a unique solution for the Euler equations with an explicit convergence rate
 for the Navier-Stokes solutions given in~\eqref{invlimit}.
 We emphasize that  both of these inviscid limit results
 are new existence results for the Euler equations. Moreover, 
 they are sharper than the previously mentioned results 
 in the sense that our assumptions on the normal derivative of $u$ are minimal. 
 The earlier works on the inviscid limit problem under Navier-type boundary conditions
 either assume that a strong solution
 for the Euler equations exists or that the initial data belongs to a space where 
 the local well-posedness for the Euler equations is already known, such as~$H^{2.5+\delta}$.
 However, here, as can be seen from \eqref{ourassumption} and \eqref{ourassumption2}, 
 we require the boundedness of only a single normal derivative
 at the expense of several conormal derivatives, which is sharp from the
perspective of inviscid limits, but it is further improvable when we disregard 
 the vanishing viscosity approach. 
 
 In Theorem~\ref{T04},
 we obtain a well-posedness result for the Euler equations under the assumption
 \begin{align}
  (u,\nabla u)|_{t=0}  \in (H^4_\cco \cap W^{2,\infty}_{\cco}) \times L^\infty,
  \label{ourassumption3}
 \end{align}
 which completely removes the differentiability requirement
 on $\nabla u$ in the conormal sense. The a~priori estimates in this case
 follow from repeating the proof of Proposition~\ref{P.Ap} for~$\nu = 0$.
However, the inviscid limit approach does not apply as a construction method. This is 
 due to the Laplacian term when we perform $W^{2,\infty}_\cco$
 estimates. To remedy this, we may employ Theorem~\ref{T02} or Theorem~\ref{T03} 
 to obtain an approximate sequence of solutions with a lifespan independent of the approximation. 
 We elaborate more on this in Section~\ref{sec.e}.  
 
In addition to our main results, our uniform estimates given 
for example in~\eqref{ap1}--\eqref{ap4}
are also sharp in integer-valued Sobolev conormal spaces.
In particular, when we consider \eqref{ourassumption}, \eqref{ourassumption2},
or \eqref{ourassumption3} with fewer
integer conormal derivatives on either $u$ or $\nabla u$, we lose control
of the key quantity $\Vert u\Vert_{W^{2,\infty}_\cco}$.
We elaborate more on this in Section~\ref{sec.inf}.
We also emphasize that, in the view of \eqref{eta}, \eqref{EQ62}, and \eqref{EQ07}, 
the assumption $\nabla u_0 \in L^\infty$
in \eqref{ourassumption}, \eqref{ourassumption2}, and \eqref{ourassumption3}
can be replaced by $(\omega_1,\omega_2) \in L^\infty$.
In other words, our results hold when we assume that the tangential part of the initial
vorticity is bounded, and we may remove the requirement that the initial velocity is Lipschitz.
 
 \startnewsection{Preliminaries and the Main Results}{sec.pre}
 
We denote $\Omega =\mathbb{R}^3_+$, and 
 for $x \in \Omega$ we write 
 $x = (x_\hh,z) = (x_1,x_2,z) \in \mathbb{R}^2 \times \mathbb{R}_+$.
We consider
 \begin{align}
  \partial_t u^\nu - \nu \Delta u^\nu + u^\nu \cdot \nabla u^\nu + \nabla p^\nu =0 
  \comma
  \nabla \cdot u^\nu = 0 \comma (x,t) \in \Omega \times (0,T)
  ,
  \label{NSE0}
 \end{align}
with the boundary conditions 
 \begin{align}
  u^\nu_3 = 0 \comma \partial_z u^\nu_\hh = 2\mu u^\nu_\hh \comma (x,t)\in \{z=0\}\times (0,T),
  \label{hnavierbdry}
 \end{align}
where $u_\hh = (u_1,u_2)$.
To introduce Sobolev conormal spaces, let $\varphi(z) = z/(1+z)$, and
 denote 
 $Z_1=\partial_1$, $Z_2=\partial_2$, and $Z_3=\varphi \partial_z$.
 Then define
 \begin{align}
  \begin{split}
   H^m_\cco(\Omega)
   =&
   \{f \in L^2(\Omega) : Z^\alpha f \in L^2(\Omega), \alpha \in \mathbb{N}_0^3, 0\le |\alpha|\le m  \}
   \\
   W^{m,\infty}_\cco(\Omega)
   =&
   \{f \in L^\infty(\Omega) : Z^\alpha f \in L^\infty(\Omega), \alpha \in \mathbb{N}_0^3, 0\le |\alpha|\le m  \}
   .
   \label{con.def}
  \end{split}
 \end{align} 
 These are Hilbert and Banach spaces, respectively, 
 endowed with the norms
 \begin{align}
  \begin{split}
   \Vert f\Vert_{H^m_\cco(\Omega)}^2
   =&
   \Vert f\Vert_{m}^2
   =
   \sum_{|\alpha|\le m} \Vert Z^\alpha f\Vert_{L^2(\Omega)}^2
   \\
   \Vert f\Vert_{W^{m,\infty}_\cco(\Omega)}
   =&
   \Vert f\Vert_{m,\infty}
   =
   \sum_{|\alpha|\le m} \Vert Z^\alpha f\Vert_{L^\infty(\Omega)}
   .
   \label{norm}
  \end{split}
 \end{align}
 When $m=0$, we write $\Vert f\Vert_{L^2}$ and $\Vert f\Vert_{L^\infty}$
 for $\Vert f\Vert_{0}$ and $\Vert f\Vert_{0,\infty}$.
 We are now in a position to state our main results.
 
 \cole
 \begin{Theorem}[Existence and Uniqueness]
  \label{T01}
  Assume that $\nu \in (0,1]$ and $\mu \in \mathbb{R}$.
  Let $u_0 \in H^4_\cco(\Omega) \cap W^{2,\infty}_\cco(\Omega)$,
  with $\nabla u_0 \in H^2_\cco(\Omega) \times L^\infty(\Omega)$, be
  such that $\div u_0 = 0$, and $u_0 \cdot n= 0$ on~$\partial \Omega$.
  Then, for some $T>0$, there exists a unique solution
  $u^\nu \in C([0,T];H^2_\cco(\Omega)) \cap L^\infty(0,T;H^4_\cco(\Omega)\cap W^{2,\infty}_\cco(\Omega))$
  to
  \eqref{NSE0}--\eqref{hnavierbdry} on~$[0,T]$, and
  $M>0$ independent of~$\nu$ such that
  \begin{align}
   \sup_{[0,T]}
   (\Vert u^\nu(t)\Vert_{4}^2
   +\Vert u^\nu(t)\Vert_{2,\infty}^2
   +\Vert \nabla u^\nu(t)\Vert_{2}^2
   +\Vert \nabla u^\nu(t)\Vert_{L^\infty}^2)
   +\nu
   \int_0^T (\Vert \nabla u^\nu(s)\Vert_{5}^2
   +\Vert D^2 u^\nu(s)\Vert_{2}^2)\,ds
   \le M,
   \label{EQ.main1}
  \end{align}
where $M>0$ depends on the norms of the initial data. 
 \end{Theorem}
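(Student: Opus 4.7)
The plan is to fold the conormal, normal, pressure, and $L^\infty$-type estimates anticipated in the introduction into a single Gronwall inequality for the energy-type quantity
\begin{equation*}
  \Phi(t) := \Vert u^\nu(t)\Vert_4^2 + \Vert u^\nu(t)\Vert_{2,\infty}^2 + \Vert \nabla u^\nu(t)\Vert_2^2 + \Vert \nabla u^\nu(t)\Vert_{L^\infty}^2,
\end{equation*}
closed uniformly in $\nu \in (0,1]$. For each fixed $\nu>0$, the existence and uniqueness of a local-in-time strong solution at this regularity level is classical (parabolic theory for Navier-Stokes with Navier boundary conditions in a half-space). The substance of the theorem therefore lies in upgrading the (a~priori $\nu$-dependent) lifespans to a common interval $[0,T]$ and proving the uniform bound \eqref{EQ.main1}. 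I would accomplish this by deriving a differential inequality of the form $\Phi'(t)\lec (1+\Phi(t))^k$ with constants independent of $\nu$, and closing it via a standard continuity argument applied to the $\nu$-dependent solutions.

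The first block is the conormal velocity estimate: applying Proposition~\ref{P.Con} at the top order and feeding in the pressure bound to absorb $\Vert \nabla p^\nu\Vert_{H^3_\cco}$, one obtains
\begin{equation*}
  \frac{d}{dt}\Vert u^\nu\Vert_4^2 + \nu\,(\text{good dissipation}) \lec (1+\Phi(t))^{k_1},
\end{equation*}
where the essential feature is that the right-hand side involves no conormal derivative of $\nabla u^\nu$ beyond $\Vert \nabla u^\nu\Vert_{L^\infty}$ and $\Vert u^\nu\Vert_{2,\infty}$, precisely because \eqref{our2} and the revised pressure bound are designed to decouple $\Vert u^\nu\Vert_{H^m_\cco}$ from $\Vert \nabla u^\nu\Vert_{H^m_\cco}$. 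Applying Proposition~\ref{P.Nor} at $m=1,2$ and the same pressure estimate gives, in parallel, $\frac{d}{dt}\Vert \nabla u^\nu\Vert_2^2 + \nu\Vert D^2 u^\nu\Vert_2^2 \lec (1+\Phi(t))^{k_2}$.

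The main obstacle, emphasized in the introduction, is propagating $\Vert u^\nu\Vert_{2,\infty}$ and $\Vert \nabla u^\nu\Vert_{L^\infty}$ uniformly in $\nu$. The anisotropic embedding route $\Vert u\Vert_{W^{2,\infty}_\cco}\lec \Vert u\Vert_5^{1/2}\Vert\nabla u\Vert_2^{1/2}$ is unavailable under our hypothesis $u_0\in H^4_\cco$, so one is forced into a direct argument. My plan is to apply a conormal vector field $Z^\alpha$ with $|\alpha|\le 2$ to \eqref{NSE0} and extract a maximum-principle-type bound for $Z^\alpha u^\nu$ along the characteristics of $u^\nu$; the Navier condition $\partial_z u^\nu_\hh=2\mu u^\nu_\hh$ on $\{z=0\}$ controls the boundary traces that arise from commuting $Z^\alpha$ with $\Delta$, and it is here that the sign of $\mu$ enters the estimate. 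For $\Vert \nabla u^\nu\Vert_{L^\infty}$, I would work with the tangential vorticity: under the Navier condition, the boundary values of $\omega^\nu_\hh$ reduce, up to horizontal derivatives, to quantities already controlled by $\Phi$, which yields a transport-diffusion equation with a uniform $L^\infty$ bound via the parabolic maximum principle; the remaining components of $\nabla u^\nu$ are recovered from the divergence-free constraint and tangential derivatives of $u^\nu$ via \eqref{eta}, \eqref{EQ62}, and~\eqref{EQ07}.

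Summing the four differential inequalities produces $\Phi'(t)\lec (1+\Phi(t))^k$ uniformly in $\nu\in(0,1]$, yielding a lifespan $T>0$ and a constant $M$, both depending only on the initial data, such that $\Phi(t)\le M$ on $[0,T]$; integrating the dissipative terms over $[0,T]$ supplies the $\nu\int_0^T(\cdots)\,ds$ contribution in \eqref{EQ.main1}. Uniqueness at each fixed $\nu$ then follows from a routine $L^2$ energy estimate on the difference of two solutions, requiring only $\nabla u^\nu\in L^1_tL^\infty_x$, which is supplied by \eqref{EQ.main1}.
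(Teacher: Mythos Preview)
Your proposal is correct and tracks the paper's own argument closely: the paper assembles Propositions~\ref{P.Con}, \ref{P.Nor} (with $m=2$), \ref{P.Pre}, and~\ref{P.Inf}(i) into the integral inequality $N^2(t)\le C\bigl(N^2(0)+\int_0^t N^3\,ds\bigr)$ of Proposition~\ref{P.Ap}(i), absorbing the $\nu\Vert\nabla u\Vert_4$ contribution from the pressure bound into the conormal dissipation, and then constructs the solution by smoothing the data and passing to the limit via an $L^2$ Cauchy argument. One small imprecision: the boundary term carrying the sign of $\mu$ in the $W^{2,\infty}_\cco$ estimate does not come from the commutator $[Z^\alpha,\Delta]$ but from integrating $-\nu\Delta Z^\alpha u$ by parts when $Z^\alpha$ contains no $Z_3$ (see \eqref{EQ103}--\eqref{EQ145}); the commutator itself produces only interior terms handled via~\eqref{EQ105}--\eqref{EQ112}.
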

 \colb
 
As pointed out in the introduction, the requirement
$\nabla u_0 \in H^2_\cco(\Omega) \times L^\infty(\Omega)$
can be replaced by 
$\nabla \omega_0 \in H^2_\cco(\Omega) \times L^\infty(\Omega)$
The analogous remark also applies to Theorems~\ref{T02}, \ref{T03},
and~\ref{T04}.
Upon establishing the existence of $u^\nu$
on a time interval $[0,T]$ which does not depend on $\nu$,
the estimate \eqref{EQ.main1} is sufficient to pass to the limit in \eqref{NSE0}
to obtain~\eqref{euler}. Namely, we have the following inviscid limit statement.
 
 \cole
 \begin{Theorem}[Inviscid Limit]
  \label{T02}
Assume that $\mu \in \mathbb{R}$. Let $u_0$ and $u^\nu$ be as in Theorem~\ref{T01}.
Then, there exists  a unique solution 
$u \in L^\infty(0,T;H^4_\cco(\Omega)\cap W^{2,\infty}_\cco(\Omega))$
with $\nabla u \in L^\infty(0,T;H^2_\cco(\Omega)\cap L^\infty(\Omega))$
to the Euler equations   \eqref{euler}
and $M>0$ independent of~$\nu$ such that
  \begin{align}
   \sup_{[0,T]}
   (\Vert u^\nu -u\Vert_{L^2}^2
   +\Vert u^\nu -u\Vert_{L^\infty}^2)
   \le M\nu^\frac{2}{5},
   \label{invlimit}
  \end{align}
  where $\nu \in [0,1]$ and $M>0$ depends on the norms of the initial data.
 \end{Theorem}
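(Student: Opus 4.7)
The plan is to construct the Euler solution $u$ as the strong $L^\infty_tL^2_x$ limit of the Navier-Stokes sequence $\{u^\nu\}_{\nu\in(0,1]}$ supplied by Theorem~\ref{T01}, upgrade the convergence to $L^\infty_{t,x}$ via a Gagliardo-Nirenberg interpolation against the uniform Lipschitz bound in~\eqref{EQ.main1}, and recover the conormal regularity of $u$ from weak-$*$ compactness.

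For the Cauchy estimate in $L^2$, let $0<\nu_2\le\nu_1\le 1$, set $w=u^{\nu_1}-u^{\nu_2}$ and $q=p^{\nu_1}-p^{\nu_2}$, which satisfy
\begin{align*}
 \partial_t w -\nu_1\Delta w+u^{\nu_1}\cdot\nabla w+w\cdot\nabla u^{\nu_2}+\nabla q=(\nu_1-\nu_2)\Delta u^{\nu_2},
\end{align*}
with $w_3|_{z=0}=0$ and $\partial_z w_\hh|_{z=0}=2\mu w_\hh$. Testing against $w$, the transport term drops by $\div u^{\nu_1}=0$, the viscous term produces $\nu_1\|\nabla w\|_{L^2}^2+2\mu\nu_1\|w_\hh(\cdot,0)\|_{L^2_\hh}^2$ after invoking the Navier BC, and the forcing $(\nu_1-\nu_2)\Delta u^{\nu_2}$ is integrated by parts using the BC satisfied by $u^{\nu_2}$. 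Combining with Young's inequality, the trace inequality $\|g|_{z=0}\|_{L^2_\hh}^2\lesssim \|g\|_{L^2(\Omega)}\|\partial_z g\|_{L^2(\Omega)}$, and the uniform estimates from~\eqref{EQ.main1} (in particular $\|\nabla u^\nu\|_{L^\infty}$ and $\|u^\nu\|_{H^1}$ are bounded by $M^{1/2}$), we arrive at
\begin{align*}
\frac{d}{dt}\|w\|_{L^2}^2\le C\|w\|_{L^2}^2+C\nu_1,
\end{align*}
so Gronwall yields $\|w(t)\|_{L^2}^2\le C(T,M,\mu)\nu_1$, proving that $\{u^\nu\}$ is Cauchy in $C([0,T];L^2(\Omega))$. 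Call the limit $u$.

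For passage to the limit and the rate, Banach-Alaoglu applied to~\eqref{EQ.main1} yields, along a subsequence, weak-$*$ convergence of $u^\nu$ to $u$ in $L^\infty(0,T;H^4_\cco\cap W^{2,\infty}_\cco)$ and of $\nabla u^\nu$ to $\nabla u$ in $L^\infty(0,T;H^2_\cco\cap L^\infty)$; lower semi-continuity of norms gives the claimed regularity. We pass to the limit in~\eqref{NSE0} in the distributional sense: $\nu\Delta u^\nu\to 0$ since $\langle\nu\Delta u^\nu,\phi\rangle=\nu\langle u^\nu,\Delta\phi\rangle\to 0$ for any test function $\phi$; the quadratic term converges strongly via $\|u^\nu\otimes u^\nu-u\otimes u\|_{L^\infty_tL^2_x}\lesssim \|u^\nu-u\|_{L^\infty_tL^2_x}(\|u^\nu\|_{L^\infty}+\|u\|_{L^\infty})\to 0$; and the pressure is recovered from the divergence-free constraint. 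The boundary condition $u\cdot n=0$ is inherited through the trace. Sending $\nu_2\downarrow 0$ in Step~1 gives $\|u^\nu-u\|_{L^\infty_tL^2_x}^2\le C\nu$, and the three-dimensional Gagliardo-Nirenberg interpolation
\begin{align*}
\|w\|_{L^\infty(\Omega)}\le C\|w\|_{L^2(\Omega)}^{2/5}\|\nabla w\|_{L^\infty(\Omega)}^{3/5}
\end{align*}
(exponents dictated by the scaling $0=-(3/5)(1/3)+(2/5)(1/2)$ in dimension three), together with the uniform bound $\|\nabla u^\nu\|_{L^\infty},\|\nabla u\|_{L^\infty}\lesssim M^{1/2}$, yields $\|u^\nu-u\|_{L^\infty}^2\le C\nu^{2/5}$, which is~\eqref{invlimit}. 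Uniqueness of $u$ follows from the same energy estimate with $\nu_1=\nu_2=0$: $\frac{d}{dt}\|u-u'\|_{L^2}^2\lesssim \|u-u'\|_{L^2}^2$ and Gronwall.

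The principal technical nuisance will be the sign-indefinite boundary contribution $2\mu\nu_1\|w_\hh(\cdot,0)\|_{L^2_\hh}^2$ together with the trace term arising from integrating $(\nu_1-\nu_2)\Delta u^{\nu_2}$ by parts: when $\mu<0$ both terms must be absorbed into $\nu_1\|\nabla w\|_{L^2}^2$ via the trace inequality and Young's inequality, so that the argument needs no sign condition on $\mu$. Aside from this bookkeeping, once Theorem~\ref{T01} and its uniform estimate~\eqref{EQ.main1} are in hand, the remainder is a standard compactness-plus-interpolation argument with the Gagliardo-Nirenberg exponent producing precisely the stated rate $\nu^{2/5}$.
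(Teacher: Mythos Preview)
Your proposal is correct and follows essentially the same route as the paper: an $L^2$ energy estimate on the difference $u^{\nu_1}-u^{\nu_2}$ (handling the sign-indefinite boundary term via the trace inequality and Young, exactly as you indicate) to get the Cauchy property with rate $\nu$, followed by the Gagliardo--Nirenberg interpolation $\Vert w\Vert_{L^\infty}\lesssim\Vert w\Vert_{L^2}^{2/5}\Vert\nabla w\Vert_{L^\infty}^{3/5}$ against the uniform Lipschitz bound to obtain the $\nu^{2/5}$ rate, with regularity of the limit coming from weak-$*$ compactness. The only cosmetic difference is that the paper writes the difference equation with the viscous splitting $(\nu_1-\nu_2)\Delta u^{\nu_1}+\nu_2\Delta U$ rather than your $\nu_1\Delta w+(\nu_1-\nu_2)\Delta u^{\nu_2}$, which is an equivalent rearrangement.
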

 \colb
 
 When the friction parameter $\mu$ is non-negative, we obtain a
 sharper result by imposing conditions on one less conormal derivative on~$\nabla u$.
 
 \cole
 \begin{Theorem}[A sharper result for non-negative friction]
  \label{T03}
  Assume that $\nu \in (0,1]$ and $\mu \ge 0$.
  Let $u_0 \in H^4_\cco(\Omega) \cap W^{2,\infty}_\cco(\Omega)$,
  and $\nabla u_0 \in H^1_\cco(\Omega) \times L^\infty(\Omega)$ be
  such that $\div u_0 = 0$ and $u_0 \cdot n= 0$ on~$\partial \Omega$.
  \begin{itemize}
   \item[i.] (Existence and Uniqueness)
   For all $\nu \in (0,1]$
   and $\mu \ge 0$ fixed, there exists a unique solution
   $u^\nu \in C([0,T];H^1_\cco(\Omega)) \cap L^\infty(0,T;H^4_\cco(\Omega)\cap W^{2,\infty}_\cco(\Omega))$
   to \eqref{NSE0}--\eqref{hnavierbdry} on~$[0,T]$, for some $T>0$.
 Moreover, there exists $M>0$ independent of $\nu$ such that
   \begin{align}
    \begin{split}
    \sup_{[0,T]}&
    (\Vert u^\nu(t)\Vert_{4}^2
    +\Vert u^\nu(t)\Vert_{2,\infty}^2
    +\Vert \nabla u^\nu(t)\Vert_{1}^2
    +\Vert \nabla u^\nu(t)\Vert_{L^\infty}^2)
    \\&
    +\nu
    \int_0^T (\Vert \nabla u^\nu(s)\Vert_{4}^2
    +\Vert D^2 u^\nu(s)\Vert_{1}^2)\,ds
    \le M,
    \llabel{EQ.main2}
    \end{split}
   \end{align}
where $M$ depends on the norms of the initial data.
 \item[ii.] (Inviscid Limit)
   There exists  a unique solution 
   $u \in L^\infty(0,T;H^4_\cco(\Omega)\cap W^{2,\infty}_\cco(\Omega))$
   with 
   $\nabla u \in L^\infty(0,T;H^1_\cco(\Omega)\cap L^\infty(\Omega))$
   to the Euler equations
   \eqref{euler}
   such that 
   \eqref{invlimit} holds for some constant
   $M>0$ depending on the norms of the initial data
   and independent of~$\nu$.
\end{itemize}
 \end{Theorem}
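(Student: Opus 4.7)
The proof follows the architecture of Theorems~\ref{T01}--\ref{T02}: establish uniform-in-$\nu$ a priori bounds for
\[
E(t) = \Vert u^\nu(t)\Vert_{4}^2 + \Vert u^\nu(t)\Vert_{2,\infty}^2 + \Vert \nabla u^\nu(t)\Vert_{1}^2 + \Vert \nabla u^\nu(t)\Vert_{L^\infty}^2,
\]
construct $u^\nu$ on an interval $[0,T]$ independent of $\nu$, and then pass to the inviscid limit. For part~(i), the conormal piece $\Vert u^\nu\Vert_{4}$ is handled by Proposition~\ref{P.Con}, i.e.\ the refined estimate \eqref{our2} whose right-hand side involves only $\Vert u^\nu\Vert_{W^{2,\infty}_\cco}$, $\Vert \nabla u^\nu\Vert_{L^\infty}$, and a pressure contribution; the normal part $\Vert \nabla u^\nu\Vert_{1}$ is handled by Proposition~\ref{P.Nor} with $m=1$; and the pressure terms are controlled by \eqref{EQ.Pre}, which by design does not re-introduce conormal derivatives of $\nabla u^\nu$.

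The reduction from the hypothesis $\nabla u_0 \in H^2_\cco$ used in Theorem~\ref{T01} to $\nabla u_0 \in H^1_\cco$ here is exactly where the sign condition $\mu \geq 0$ enters. When one performs the energy estimate on $Z^\alpha \nabla u^\nu$ with $|\alpha|=1$, integration by parts of the viscous term against \eqref{hnavierbdry} produces a boundary contribution proportional to $-\nu\mu\int_{\{z=0\}} |Z u_\hh^\nu|^2$; this has favorable sign when $\mu \geq 0$ and may simply be dropped, whereas for $\mu<0$ the only way to dominate it is to admit one more normal derivative, which forces the stronger assumption of Theorem~\ref{T01}. The $W^{2,\infty}_\cco$ and $L^\infty$ bounds on $u^\nu$ and $\nabla u^\nu$ are propagated via the vorticity equation, where the boundary data imposed on $\omega^\nu$ through \eqref{hnavierbdry} again interacts with the half-space heat semigroup in a sign-favorable manner when $\mu \geq 0$. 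Once all bounds close uniformly on $[0,T]$, existence of $u^\nu$ follows by mollifying the initial data, invoking the classical Navier-Stokes theory at fixed $\nu>0$, and passing to the limit in the mollification parameter while preserving the uniform bound. Uniqueness is then obtained by an $L^2$ energy estimate on the difference of two solutions, controlled by $\Vert \nabla u^\nu\Vert_{L^\infty}$.

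For part~(ii) I would show that $\{u^\nu\}$ is Cauchy in $L^\infty_t L^2_x \cap L^\infty_{t,x}$. Writing $w = u^\nu - u^{\nu'}$, an $L^2$ energy estimate combined with the uniform bound on $\nu\int_0^T \Vert D^2 u^\nu\Vert_{1}^2\,ds$ from part~(i), the Navier boundary condition, and Gronwall's inequality yields $\Vert w\Vert_{L^\infty_t L^2_x}^2 \lec |\nu - \nu'|$. Interpolating against the uniform $W^{1,\infty}_\cco$ control from part~(i) via a Gagliardo-Nirenberg-type inequality then produces the $L^\infty_{t,x}$ convergence rate $\nu^{2/5}$ in \eqref{invlimit}. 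The pointwise limit $u$ inherits all the uniform bounds by weak-$*$ compactness and satisfies \eqref{euler} in the distributional sense, since $\nu\Delta u^\nu \to 0$ in $\mathcal{D}'$. Uniqueness of the Euler solution at this regularity is obtained by an $L^2$ estimate on the difference of two such solutions, which remain Lipschitz in space.

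The main obstacle I expect is the propagation of $\Vert u^\nu\Vert_{W^{2,\infty}_\cco}$ under the weaker assumption $\nabla u_0 \in H^1_\cco$. As emphasized in the introduction, the Gagliardo-Nirenberg route of \eqref{EQ.emb} would require $\nabla u^\nu \in H^{2+s_1}_\cco$ with $s_1>0$, which is unavailable here. One must therefore analyze the $W^{2,\infty}_\cco$-norm directly through the vorticity equation and the half-space heat semigroup, and it is precisely at this step that the favorable boundary contribution provided by $\mu \geq 0$ is indispensable and sets Theorem~\ref{T03} apart from Theorem~\ref{T01}.
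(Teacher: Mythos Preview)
Your overall architecture---combine Proposition~\ref{P.Con}, Proposition~\ref{P.Nor} with $m=1$, Proposition~\ref{P.Pre}, and an $L^\infty$ estimate, then run the Cauchy argument of Proposition~\ref{P01} followed by interpolation---is exactly the paper's plan, and your treatment of part~(ii) matches the paper essentially verbatim. However, you have mislocated the step at which the sign condition $\mu\ge 0$ is actually used, and the mechanism you describe there does not work as stated.

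You claim the sign enters in the $H^1_\cco$ energy estimate on $\nabla u^\nu$, via a boundary contribution of the form $-\nu\mu\int_{\{z=0\}}|Zu_\hh^\nu|^2$. In the paper this estimate is carried out not on $\nabla u^\nu$ but on the shifted quantity $\eta=\omega_\hh-2\mu u_\hh^\perp$, which is chosen precisely so that $\eta|_{z=0}=0$; see~\eqref{eta}--\eqref{EQ.eta}. Consequently the proof of Proposition~\ref{P.Nor} produces \emph{no} boundary term at all (cf.\ \eqref{EQ64} and \eqref{EQ74}), and \eqref{EQ.Nor} with $m=1$ holds for every $\mu\in\mathbb{R}$. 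The sign of $\mu$ is irrelevant there.

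The place where $\mu\ge 0$ is decisive is the $W^{2,\infty}_\cco$ estimate on $u$ itself, Proposition~\ref{P.Inf}. The paper tests the equation for $Z^\alpha u$, $|\alpha|=2$, against $Z^\alpha u\,|Z^\alpha u|^{p-2}$ and sends $p\to\infty$; see \eqref{EQ102}--\eqref{EQ114}. When $\alpha$ is purely tangential, the viscous term produces the boundary contribution $2\mu\nu\,\delta_{0k}\Vert Z^{\tilde\alpha}_\hh u_\hh\Vert_{L^p(\partial\Omega)}^p$ in \eqref{EQ103}. For $\mu\ge 0$ this is coercive and dropped, yielding \eqref{EQ.Inf2}, whose right side needs only $\nu\int_0^t\Vert\nabla\eta\Vert_1^2\,ds$ from \eqref{EQ115}. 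For $\mu<0$ one is forced through the trace bound \eqref{EQ145}, which after \eqref{EQ146} introduces $\nu\int_0^t\Vert\nabla\eta\Vert_2^2\,ds$; this is the sole reason Theorem~\ref{T01} requires $\nabla u_0\in H^2_\cco$. Your proposal to handle $\Vert u\Vert_{2,\infty}$ ``through the vorticity equation and the half-space heat semigroup'' is not what the paper does and would not, as described, isolate this sign structure; the $L^p$-testing on the \emph{velocity} equation is the essential device here.
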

 \colb
 
 Furthermore, it is possible to improve on the Euler equations part of Theorem~\ref{T03}.
 
 \cole
 \begin{Theorem}[Euler equations in Sobolev conormal spaces]
  \label{T04}
  Let $u_0 \in H^4_\cco(\Omega) \cap W^{2,\infty}_\cco(\Omega)$,
  and $\nabla u_0 \in L^\infty(\Omega)$ be
  such that $\div u_0 = 0$, and $u_0 \cdot n= 0$ on~$\partial \Omega$.
For some $T>0$,
   there exists  a unique solution 
   $u \in L^\infty(0,T;H^4_\cco(\Omega)\cap W^{2,\infty}_\cco(\Omega))$
   with $\nabla u \in L^\infty(\Omega \times (0,T))$
   to the Euler equations
   \eqref{euler}
   and
   $M>0$ independent of~$\nu$ 
   such that
   \begin{align}
    \sup_{[0,T]}
    (\Vert u(t)\Vert_{4}^2
    +\Vert u(t)\Vert_{2,\infty}^2
    +\Vert \nabla u(t)\Vert_{L^\infty}^2)
    \le M,
    \label{EQ.main3}
   \end{align}
where $M>0$ depends on the norms of the initial data. 
 \end{Theorem}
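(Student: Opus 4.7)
The plan rests on an a priori estimate combined with an Euler-based approximation scheme, since the direct vanishing viscosity construction is obstructed at the level of $W^{2,\infty}_\cco$ estimates by the Laplacian term when only $\nabla u_0 \in L^\infty$ is available. First, I would derive the bound \eqref{EQ.main3} for smooth Euler solutions by repeating the proof of Proposition~\ref{P.Ap} with $\nu=0$; the viscous contributions either vanish on the right-hand side or are dropped from the left, leaving a closed differential inequality of the form $\frac{d}{dt}\mathcal E \lec P(\mathcal E)$, where $\mathcal E = \Vert u\Vert_4^2 + \Vert u\Vert_{2,\infty}^2 + \Vert \nabla u\Vert_{L^\infty}^2$ and $P$ is polynomial. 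This yields a local-in-time bound with lifespan depending only on $\mathcal E(0)$.

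Next, I would regularize the initial datum to produce a sequence $u_0^\epsilon$ with the extra conormal smoothness $\nabla u_0^\epsilon \in H^2_\cco$ required by Theorem~\ref{T02}, with $u_0^\epsilon \to u_0$ in $L^2 \cap L^\infty$, with uniform bounds on $\Vert u_0^\epsilon\Vert_4 + \Vert u_0^\epsilon\Vert_{2,\infty} + \Vert \nabla u_0^\epsilon\Vert_{L^\infty}$, and preserving both the divergence-free condition and the boundary condition $u_0^\epsilon \cdot n = 0$. Theorem~\ref{T02} then provides an Euler solution $u^\epsilon$ on an interval $[0,T_\epsilon]$ for each $\epsilon$. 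Since the regularized data are uniformly bounded in the T04-norm, applying the a priori estimate from the first step to the smooth solution $u^\epsilon$ promotes the lifespan to a common interval $[0,T]$ and supplies \eqref{EQ.main3} uniformly in $\epsilon$.

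To pass to the limit, I would estimate the difference $w = u^{\epsilon_1} - u^{\epsilon_2}$ in $L^\infty(0,T;L^2 \cap L^\infty)$. Since $w$ satisfies a linear transport-type equation whose coefficients are uniformly bounded in $W^{1,\infty}$ by the previous step, a Gr\"onwall argument mirroring the Cauchy estimate underlying \eqref{invlimit} shows $\{u^\epsilon\}$ is Cauchy in $L^\infty(0,T;L^2\cap L^\infty)$, and the limit $u$ inherits \eqref{EQ.main3} by weak-$*$ lower semicontinuity in $L^\infty_tH^4_\cco \cap L^\infty_t W^{2,\infty}_\cco$. Uniqueness follows from the same difference estimate applied to two solutions with the stated regularity, since both carry $\nabla u \in L^\infty(\Omega \times (0,T))$.

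The principal obstacle I anticipate is the regularization step: standard tangential mollification in $x_\hh$ commutes with $Z_1,Z_2,\div$ and is compatible with the boundary condition, but it does not introduce normal-conormal smoothness for $\nabla u_0^\epsilon$, while a full heat-type smoothing gains the $H^2_\cco$-regularity on the gradient but must be modified to respect the divergence-free constraint and the Navier condition. A careful construction (for instance, a Stokes semigroup with Navier boundary data, followed by a boundary-layer corrector) should yield a family with the required properties, and verifying that the T04-norm is preserved uniformly in the limit is the technical heart of the argument.
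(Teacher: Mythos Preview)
Your plan matches the paper's sketch in Section~\ref{sec.e} almost step for step: derive the $\nu=0$ a~priori estimate by rerunning Propositions~\ref{P.Con}, \ref{P.Pre}, and~\ref{P.Inf}, regularize the data, invoke one of the earlier inviscid-limit theorems to produce approximate Euler solutions, and pass to the limit via a Cauchy estimate in $L^\infty_t L^2_x$. The paper regularizes into the class \eqref{ourassumption2} and invokes Theorem~\ref{T03}(ii) rather than Theorem~\ref{T02}, but either route works, and your identification of the mollification/divergence-free/boundary compatibility issue as the main technical burden is consistent with the paper's remark that ``a detailed construction will be addressed elsewhere.''

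There is one point you elide that the paper treats explicitly. You write that the uniform T04-norm bound ``promotes the lifespan to a common interval $[0,T]$.'' But Theorem~\ref{T02} supplies $u^\epsilon$ on an interval $[0,T_\epsilon]$ whose length is governed by the full $N(0)$ of~\eqref{ap1}, in particular by $\Vert\nabla u_0^\epsilon\Vert_2$, which need not be bounded uniformly in~$\epsilon$. The a~priori estimate you derived controls only $\Vert u\Vert_4+\Vert u\Vert_{2,\infty}+\Vert\nabla u\Vert_{L^\infty}$, so it does not by itself serve as a continuation criterion for the approximate solutions. The paper closes this gap by observing that, once the T04-norm is uniformly bounded on $[0,T]$, the estimate~\eqref{EQ.Nor} for $\Vert\eta\Vert_m$ becomes \emph{linear} in $\Vert\eta\Vert_m$ (all coefficients $\Vert u\Vert_{2,\infty}$, $\Vert\eta\Vert_{L^\infty}$, $\Vert u\Vert_{m+2}$, $\Vert\nabla p\Vert_m$ are now controlled); Gr\"onwall then keeps $\Vert\nabla u^\epsilon\Vert_2$ finite---possibly growing in $\epsilon$---on all of $[0,T]$, and the solution continues. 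You should insert this linearization step between your a~priori bound and the uniform-lifespan claim.
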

 \colb

 When we perform integration by parts in the term involving the Laplacian, 
 \begin{align}
  -\nu\int_{\Omega} \Delta Z^\alpha u^\nu Z^\alpha u^\nu 
  = 
  \nu\int_{\Omega} |\nabla Z^\alpha u^\nu|^2  +2\nu \mu \int_{\partial \Omega}  |Z^\alpha u^\nu_\hh|^2,
  \llabel{example}
 \end{align}
 we obtain a boundary term that is coercive when~$\mu \ge 0$. 
 When estimating $\Vert u^\nu\Vert_{2,\infty}$ with $\mu \in \mathbb{R}$ a boundary term similar to the one above
 is only controllable when $\nabla u^\nu \in H^2_\cco(\Omega)$; see~\eqref{EQ102}--\eqref{EQ145} for more details. On the other hand,  
 when $\mu \ge 0$, we can estimate $\Vert u\Vert_{2,\infty}$ using~$\Vert \nabla u\Vert_{1}$. This is why in Theorem~\ref{T03}
 we only consider non-negative~$\mu$. We also remark that the time of existence obtained in Theorems~\ref{T01},~\ref{T03}(i), and~\ref{T04} need not be the same. However, for convenience, we denote the time by~$T$.
 
 Note that one of the difficulties is that, unlike
 $Z_1$ and $Z_2$, the operator $Z_3$ does not commute with~$\partial_z$.
 Therefore, we analyze 
 the conormal derivatives with or without
 $Z_3$ separately, and when we do, we write
 \begin{align}
    Z^\alpha = Z^{\tilde{\alpha}}_\hh Z^k_3
    \comma \alpha = (\tilde{\alpha}, k) \in \mathbb{N}_0^2 \times \mathbb{N}_0
    .
    \label{ZhZ3}  
 \end{align} 
 We shall use the following commutator identities.
 
 \cole
 \begin{lemma}
  \label{L01}
  Let $f$ be a smooth function. Then there exist smooth, bounded 
  $\{c^k_{j,\varphi}\}_{j=0}^k$ and $\{\tilde{c}^k_{j,\varphi}\}_{j=0}^k$ of $z$, for $k \in \mathbb{N}$, depending on $\varphi$ such that
  \begin{align}
   \begin{split}    
    &(i)\, Z^k_3 \partial_z f 
    =
    \sum_{j=0}^{k}
    c^k_{j,\varphi} \partial_z Z^j_3 f
    =\partial_z Z_3^kf + \sum_{j=0}^{k-1}
    c^k_{j,\varphi} \partial_z Z^j_3 f
    ,
    \\
    &(ii)\, \partial_z Z^k_3 f 
    =
    \sum_{j=0}^{k}
    \tilde{c}^k_{j,\varphi} Z^j_3 \partial_z f
    =Z_3^k \partial_z f +\sum_{j=0}^{k-1}
    \tilde{c}^k_{j,\varphi} Z^j_3 \partial_z f
    ,
    \\
    &(iii)\, Z^k_3 \partial_{zz} f 
    =
    \sum_{j=0}^{k}
    \sum_{l=0}^{j}
    \left(
    c^j_{l,\varphi} c^k_{j,\varphi} \partial_{zz} Z^l_3 f 
    +
    (c^j_{l,\varphi})' c^k_{j,\varphi} \partial_{z} Z^l_3 f 
    \right)
    ,
    \\
    &(iv)\, \partial_{zz} Z^k_3 f 
    =
    \sum_{j=0}^{k}
    \sum_{l=0}^{j}
    \tilde{c}^j_{l,\varphi} \tilde{c}^k_{j,\varphi} Z^l_3 \partial_{zz} f 
    +
    \sum_{j=0}^{k}
    (\tilde{c}^k_{j,\varphi})' Z^j_3 \partial_z f 
    ,
    \label{EQL02}
   \end{split}
  \end{align}
  where $\tilde{c}^k_{k,\varphi} = 1 = c^k_{k,\varphi}$, and
  the prime indicates the derivative with respect to the variable~$z$.
 \end{lemma}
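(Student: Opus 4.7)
The plan is to prove (i) and (ii) by induction on $k$, and then deduce (iii) and (iv) by applying the two identities consecutively to $\partial_z f$. The only underlying commutator needed is the elementary identity
\begin{align*}
[Z_3,\partial_z]
= \varphi\partial_z\partial_z - \partial_z(\varphi \partial_z)
= -\varphi'\partial_z,
\end{align*}
so that $Z_3 \partial_z = \partial_z Z_3 - \varphi'\partial_z$ and $\partial_z Z_3 = Z_3 \partial_z + \varphi'\partial_z$. These give the base cases $k=1$ of (i) and (ii) with $c^1_{1,\varphi} = \tilde c^1_{1,\varphi}=1$, $c^1_{0,\varphi} = -\varphi'$, and $\tilde c^1_{0,\varphi} = \varphi'$.

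For the inductive step in (i), assume the identity holds at level $k$. Applying $Z_3 = \varphi\partial_z$ to both sides and using the Leibniz rule gives
\begin{align*}
Z_3^{k+1}\partial_z f
= \sum_{j=0}^{k} \varphi (c^k_{j,\varphi})' \partial_z Z_3^j f + \sum_{j=0}^{k} c^k_{j,\varphi}\, Z_3 \partial_z Z_3^j f.
\end{align*}
Replacing $Z_3 \partial_z Z_3^j f$ by $\partial_z Z_3^{j+1} f - \varphi' \partial_z Z_3^j f$ via the base case and relabeling indices yields the recursion
\begin{align*}
c^{k+1}_{j,\varphi} = c^k_{j-1,\varphi} + \varphi (c^k_{j,\varphi})' - \varphi' c^k_{j,\varphi},
\qquad 0 \le j \le k+1,
\end{align*}
with the conventions $c^k_{-1,\varphi} = c^k_{k+1,\varphi} = 0$. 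In particular $c^{k+1}_{k+1,\varphi} = c^k_{k,\varphi} = 1$. Since $\varphi(z)=z/(1+z)$ and all of its derivatives are smooth and bounded on $[0,\infty)$, the recursion preserves smoothness and boundedness of the $c^k_{j,\varphi}$ and of their derivatives in $z$, as needed. The argument for (ii) is identical, replacing $-\varphi'\partial_z$ by $+\varphi'\partial_z$ and producing the analogous recursion for $\tilde c^k_{j,\varphi}$.

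For (iii), write $\partial_{zz} = \partial_z \partial_z$, apply (i) to get
\begin{align*}
Z_3^k \partial_{zz} f
= \sum_{j=0}^{k} c^k_{j,\varphi}\, \partial_z \bigl( Z_3^j \partial_z f \bigr),
\end{align*}
then apply (i) once more to $Z_3^j \partial_z f$ and use the product rule on $\partial_z$; the two sums combine into the claimed double sum, with the cross term $(c^j_{l,\varphi})' c^k_{j,\varphi}\partial_z Z_3^l f$ coming from differentiating the coefficient. Identity (iv) is obtained analogously by writing $\partial_{zz} Z_3^k f = \partial_z (\partial_z Z_3^k f)$, applying (ii) to the inner factor, and then (ii) again to $Z_3^j \partial_z f$ after differentiating through with Leibniz; the leftover single sum $\sum_j (\tilde c^k_{j,\varphi})' Z_3^j \partial_z f$ comes from the derivative landing on the coefficient rather than on the conormal-derivative expression.

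The only possible obstacle is purely bookkeeping, namely verifying that the recursion produces coefficients that are smooth bounded functions of $z$ alone; this follows at once from $\varphi \in C^\infty([0,\infty))$ with all derivatives bounded, together with the fact that $\varphi'$ and each subsequent derivative decays like a power of $1/(1+z)$. No analytical difficulty arises.
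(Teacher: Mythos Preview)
Your proof is correct and follows exactly the approach indicated in the paper, which merely states that the proof is by induction based on the identity $Z_3\partial_z f = \partial_z Z_3 f - \varphi'\partial_z f$. You have simply carried out this induction in full detail, including the explicit recursion for the coefficients and the derivation of (iii) and (iv) from (i) and (ii).
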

 \colb
 The proof is by induction and is based on
 \begin{align}
  Z_3 \partial_z f  
  =
  \partial_z Z_3 f - \varphi' \partial_z f 
  .
  \llabel{EQL03}
 \end{align} 
 We also take advantage of the well-known product estimate
 given in the next lemma.
 \cole
 \begin{Lemma}[An anisotropic Sobolev product estimate]
  \label{L02}
  Let $f$ and $g$ be sufficiently smooth functions and $p_i,q_i \in [2,\infty]$, for $i=1,2$. Then
  \begin{align}
   \Vert fg\Vert_{L^2(\Omega)}
   \lec
   \Vert f\Vert_{L^{p_1}_\hh(\mathbb{R}^2;L_z^{q_1}(\mathbb{R}_+))}
   \Vert g\Vert_{L^{p_2}_\hh(\mathbb{R}^2;L_z^{q_2}(\mathbb{R}_+))}
   \lec
   \Vert f\Vert_{H^{r_1}_\hh(\mathbb{R}^2;H_z^{s_1}(\mathbb{R}_+))}
   \Vert g\Vert_{H^{r_2}_\hh(\mathbb{R}^2;H_z^{s_2}(\mathbb{R}_+))}
   \label{EQ.ani}
   ,
  \end{align}
  for any $H^{r_i}_\hh(\mathbb{R}^2) \subset L^{p_i}_\hh(\mathbb{R}^2)$
  and $H^{s_i}_z(\mathbb{R}_+) \subset L^{q_i}_z(\mathbb{R}_+)$, where $i=1,2$ and
  $\frac{1}{p_1} + \frac{1}{p_2} = \frac{1}{q_1} + \frac{1}{q_2} = \frac{1}{2}$. 
 \end{Lemma}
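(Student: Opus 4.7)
The proof of Lemma~\ref{L02} splits naturally into two inequalities, and I would handle them separately. For the first inequality, the plan is a two-stage H\"older argument, first in the normal variable $z$ and then in the horizontal variable $x_\hh \in \mathbb{R}^2$. Concretely, I would start from
\begin{align*}
 \Vert fg\Vert_{L^2(\Omega)}^2
 =
 \int_{\mathbb{R}^2} \int_0^\infty |f(x_\hh,z)|^2 |g(x_\hh,z)|^2 \, dz \, dx_\hh
\end{align*}
and, using the hypothesis $\tfrac{1}{q_1}+\tfrac{1}{q_2}=\tfrac12$ so that $\tfrac{2}{q_1}+\tfrac{2}{q_2}=1$, apply H\"older in $z$ to obtain the pointwise (in $x_\hh$) bound
\begin{align*}
 \int_0^\infty |f|^2|g|^2 \, dz
 \le
 \Vert f(x_\hh,\cdot)\Vert_{L^{q_1}_z}^2 \Vert g(x_\hh,\cdot)\Vert_{L^{q_2}_z}^2.
\end{align*}
Then, using $\tfrac{2}{p_1}+\tfrac{2}{p_2}=1$, a second application of H\"older in $x_\hh$ yields the first inequality in~\eqref{EQ.ani}.

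For the second inequality, I would use the assumed one-dimensional Sobolev embeddings variable by variable. Pointwise in $x_\hh$, $H^{s_i}_z(\mathbb{R}_+)\subset L^{q_i}_z(\mathbb{R}_+)$ gives
\begin{align*}
 \Vert f(x_\hh,\cdot)\Vert_{L^{q_1}_z}
 \lec
 \Vert f(x_\hh,\cdot)\Vert_{H^{s_1}_z},
\end{align*}
so taking the $L^{p_1}_\hh$-norm of both sides and invoking $H^{r_1}_\hh(\mathbb{R}^2)\subset L^{p_1}_\hh(\mathbb{R}^2)$ applied to the scalar function $x_\hh \mapsto \Vert f(x_\hh,\cdot)\Vert_{H^{s_1}_z}$ produces
\begin{align*}
 \Vert f\Vert_{L^{p_1}_\hh(L^{q_1}_z)}
 \lec
 \Vert f\Vert_{H^{r_1}_\hh(H^{s_1}_z)}.
\end{align*}
The same argument handles $g$ with indices $(p_2,q_2,r_2,s_2)$, and combining the two bounds with the first inequality finishes the proof.

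The only technical points I anticipate are (a) the endpoint cases $p_i$ or $q_i$ equal to $2$ or $\infty$, which force one of the H\"older exponents above to be $1$ or $\infty$ and are covered directly by Minkowski/Cauchy--Schwarz; and (b) the measurability required to apply the scalar embedding $H^{r_1}_\hh\subset L^{p_1}_\hh$ to $x_\hh \mapsto \Vert f(x_\hh,\cdot)\Vert_{H^{s_1}_z}$. The latter is routine for smooth $f$, which is the standing assumption in the lemma, so one does not need the full Bochner-space machinery; once the estimate is established for smooth functions, the general case follows by density. No step of the argument looks genuinely hard, and the entire lemma is essentially H\"older plus successive one-dimensional Sobolev embeddings, consistent with how \eqref{EQ.ani} is used elsewhere in the paper.
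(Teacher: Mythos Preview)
Your argument is correct and is exactly the standard proof of this anisotropic product estimate: H\"older in $z$ followed by H\"older in $x_\hh$ gives the first inequality, and the one-dimensional Sobolev embeddings applied successively in $z$ and in $x_\hh$ give the second. The paper in fact does not supply a proof of Lemma~\ref{L02} at all; it introduces it as ``the well-known product estimate'' and moves on, so there is nothing to compare against beyond noting that your write-up matches the expected elementary argument.
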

 \colb
 
 We shall also use the inequalities
 \begin{align}
  \Vert Z^{\alpha_1}f 
  Z^{\alpha_2}g\Vert_{L^2}
  \lec
  \Vert f\Vert_{L^\infty}
  \Vert g\Vert_{k}
  +
  \Vert f\Vert_{k}
  \Vert g\Vert_{L^\infty}
  \comma |\alpha_1|+|\alpha_2|=k       
  \comma f,g \in L^\infty \cap H^k_\cco
  \label{EQ.int}
 \end{align}
 and
 \begin{align}
  \Vert f\Vert_{L^\infty}^2
  \lec
  \Vert (I-\Delta_h)^\frac{s_1}{2}\partial_z f\Vert_{L^2}
  \Vert (I-\Delta_h)^\frac{s_2}{2}f\Vert_{L^2}
  \comma s_1,s_2 \ge 0 \comma s_1+s_2 >2.
  \label{EQ.emb}
 \end{align}
 For the proof of \eqref{EQ.int}, see~\cite{G}, while 
 \eqref{EQ.emb} is proven in~\cite{MR2}.
 Next, using the slip boundary condition, the Hardy inequality and the 
 divergence-free condition, we obtain
 \begin{align}
  \left\Vert \frac{u^\nu_3}{\varphi}\right\Vert_{W^{k,p}_\cco}
  \lec \Vert Z_\hh u^\nu_\hh\Vert_{W^{k,p}_\cco}
  ,
  \label{EQ.u3}
 \end{align}
 for all $p \in [1,\infty]$, with $W^{k,p}_\cco$ defined
 analogously to~\eqref{con.def}.
 An additional way that we utilize the divergence-free condition is
by means of Lemma~\ref{L02}.
 Since for any conormal derivative
 $Z^\alpha$ we have
 \begin{align}
  \Vert Z^\alpha \partial_z u^\nu_3\Vert_{L^2}
  \lec
  \Vert u^\nu\Vert_{|\alpha|+1}
  \comma \alpha \in \mathbb{N}^3_0
  ,\label{EQ.u3div}
 \end{align}
 the expression $\Vert Z^{\alpha} u^\nu_3 \Vert_{L^4_\hh L^\infty_z}$
 may be estimated by writing  
 \begin{align}
  \begin{split}
   \Vert Z^{\alpha} u^\nu_3 \Vert_{L^4_\hh L^\infty_z}
   &\lec
   \Vert Z^{\alpha} u^\nu_3 \Vert_{H^\frac{1}{2}_\hh H_z^{\frac{1}{2}+}}
   \lec
   \Vert Z_h Z^{\alpha} u^\nu_3 \Vert_{L^2}
   +\Vert Z^{\alpha} u^\nu_3 \Vert_{L^2}
   +\Vert \partial_z Z^{\alpha} u^\nu_3 \Vert_{L^2}
   +\Vert \partial_z Z_h Z^{\alpha} u^\nu_3 \Vert_{L^2}
   \\&
   \lec
   \Vert u^\nu\Vert_{|\alpha|+2}. 
   \label{EQ.u3div2}
  \end{split}
 \end{align}
 The same reasoning gives
 \begin{align}
  \Vert Z^{\alpha} f \Vert_{L^4_\hh L^\infty_z}
  \lec
  \Vert f\Vert_{|\alpha|+1}+\Vert \partial_z f\Vert_{|\alpha|+1}
  ,\label{EQ15}  
 \end{align}
 for every $f \in H_\cco^{|\alpha|+1}(\Omega)$ such that
 $\partial_z f \in H_\cco^{|\alpha|+1}(\Omega)$.
 
 Now, we fix $\nu \in (0,1]$,
 and denote by $(u,p)$, instead of $(u^\nu,p^\nu)$, the smooth solution to the Navier~Stokes system
 with the viscosity~$\nu$.
 
 \cole
 \begin{Proposition}[A priori estimates]
  \label{P.Ap}
  Any smooth solution $u$
  to \eqref{NSE0}--\eqref{hnavierbdry} defined on $[0,T]$,
  satisfies the following:
  \begin{itemize}
   \item[i.] If $\mu \in \mathbb{R}$, we have
   \begin{align}
    N^2(t)=
    (\Vert u(t)\Vert_{4}
    +\Vert u(t)\Vert_{2,\infty}
    +\Vert \nabla u(t)\Vert_{2}
    +\Vert \nabla u(t)\Vert_{L^\infty})^2   
    \le C \left(N^2(0) + \int_0^t N^3(s)\,ds\right)
    \label{ap1}
   \end{align}
   and
   \begin{align}
    \nu\int_0^t (\Vert \nabla u\Vert_{4}^2
    +\Vert D^2 u\Vert_{2}^2) \,ds
    \le C \left(N^2(0) + \int_0^t N^3(s)\,ds\right)
    ,\label{ap2}
   \end{align}
   for $t \in [0,T]$.
   \item[ii.] If $\mu \ge 0$, we have
   \begin{align}
    \bar{N}^2(t)=
    (\Vert u(t)\Vert_{4}
    +\Vert u(t)\Vert_{2,\infty}
    +\Vert \nabla u(t)\Vert_{1}
    +\Vert \nabla u(t)\Vert_{L^\infty})^2   
    \le C \left(\bar{N}^2(0) + \int_0^t \bar{N}^3(s)\,ds\right)
    \label{ap3}
   \end{align}
and
   \begin{align}
    \nu\int_0^t (\Vert \nabla u\Vert_{4}^2
    +\Vert D^2 u\Vert_{1}^2) \,ds
    \le C \left(\bar{N}^2(0) + \int_0^t \bar{N}^3(s)\,ds\right)
    ,\label{ap4}
   \end{align}
for $t \in [0,T]$.
\end{itemize}
\end{Proposition}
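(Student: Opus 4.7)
The plan is to establish Proposition~\ref{P.Ap} by combining the conormal estimate for $u$ from Proposition~\ref{P.Con}, the normal estimate for $\nabla u$ from Proposition~\ref{P.Nor}, the pressure estimate \eqref{EQ.Pre}, and separate $L^\infty$-type estimates for $\|u\|_{2,\infty}$ and $\|\nabla u\|_{L^\infty}$ into a single coupled differential inequality that closes in the quantity $N(t)$ (resp.\ $\bar{N}(t)$). The schematic $\tfrac{d}{dt}\|u\|_4^2 \lesssim \|\nabla p\|_3\|u\|_4 + (1+\|\nabla u\|_{L^\infty}+\|u\|_{2,\infty})\|u\|_4^2$, combined with the normal inequality applied at $m=2$ in case (i) and $m=1$ in case (ii), already contains all the unknowns defining $N$ or $\bar N$; together with the pressure bound, which by design of \eqref{EQ.Pre} depends only on $\|u\|_{H^m_\cco}$ and not on conormal derivatives of $\nabla u$, it is purely a matter of summing and applying Young's inequality to absorb mixed products into a cubic in $N$ (or $\bar N$).

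For the $L^\infty$-type quantities, I would handle $\|\nabla u\|_{L^\infty}$ via the vorticity equation: splitting $\omega$ into tangential and normal components and exploiting the divergence-free identity \eqref{EQ.u3} together with the anisotropic embeddings \eqref{EQ.ani}, \eqref{EQ.emb}, and \eqref{EQ.u3div2}, so that $\|\nabla u\|_{L^\infty}$ is bounded by a combination of $\|\omega\|_{H^m_\cco}$ and $\|u\|_{H^{m+2}_\cco}$-type norms already accounted for in $N$. For $\|u\|_{2,\infty}$ one cannot simply use the anisotropic embedding $\|u\|_{W^{2,\infty}_\cco}^2 \lesssim \|\nabla u\|_{H^{2+s_1}_\cco}\|u\|_{H^{2+s_2}_\cco}$ referenced after \eqref{ourassumption3}, because that requires $\nabla u \in H^3_\cco$, defeating the whole point of the paper. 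Instead, one applies $Z^\alpha$ with $|\alpha|\le 2$ to the equation and estimates it directly in $L^\infty$, bringing in a Laplacian contribution that, after using \eqref{hnavierbdry} near the boundary, reduces to a boundary term whose sign is governed by $\mu$.

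This is precisely where (i) and (ii) diverge. For $\mu \ge 0$, the boundary contribution coming from the Laplacian in the $W^{2,\infty}_\cco$ estimate is coercive (as previewed by the computation displayed after Theorem~\ref{T04}), which allows one to close using only $\|\nabla u\|_1$, yielding \eqref{ap3}. For $\mu \in \mathbb{R}$, the boundary term has no definite sign and must be controlled by a trace argument, which forces an extra conormal derivative on $\nabla u$; thus case (i) requires $\nabla u \in H^2_\cco$, yielding \eqref{ap1}. The viscous dissipation estimates \eqref{ap2} and \eqref{ap4} are harvested simultaneously: integrating by parts the $-\nu\Delta u$ term in both the $H^4_\cco$ estimate of $u$ and the $H^m_\cco$ estimate of $\nabla u$ produces the terms $\nu\int_0^t\|\nabla u\|_4^2$ and $\nu\int_0^t \|D^2 u\|_m^2$ on the left-hand side, with the associated Navier boundary contributions absorbed into the right-hand side by~$\mu$-dependent Young inequalities.

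The main obstacle will be the $W^{2,\infty}_\cco$ estimate for $u$. One must extract a usable bound on $\|u\|_{2,\infty}$ without being able to invoke conormal Sobolev embedding on $\nabla u$; this forces working with the raw equation and handling the resulting boundary contribution from the Laplacian term carefully, which is where the sign restriction on $\mu$ enters in case (ii) and where the regularity assumption $\nabla u \in H^2_\cco$ is unavoidable in case (i). Once this estimate is in place, assembling the pieces and closing by Young's inequality to produce the cubic integral bound is mechanical, and the final inequalities \eqref{ap1}--\eqref{ap4} then follow by summation.
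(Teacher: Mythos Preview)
Your overall architecture matches the paper: combine Proposition~\ref{P.Con}, Proposition~\ref{P.Nor} at $m=2$ (case~(i)) or $m=1$ (case~(ii)), the pressure bound~\eqref{EQ.Pre}, and the $L^\infty$ estimates, then sum and absorb cross terms to close in $N$ or $\bar N$. Your treatment of $\|u\|_{2,\infty}$ is also right in spirit---the paper carries it out by testing \eqref{EQ102} against $Z^\alpha u|Z^\alpha u|^{p-2}$ and sending $p\to\infty$---and you correctly locate the bifurcation between (i) and (ii) in the boundary contribution from the Laplacian (see \eqref{EQ145}--\eqref{EQ146}).

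The gap is in your handling of $\|\nabla u\|_{L^\infty}$. You propose that it be ``bounded by a combination of $\|\omega\|_{H^m_\cco}$ and $\|u\|_{H^{m+2}_\cco}$-type norms'' via the anisotropic embeddings. This does not work: the conormal spaces $H^m_\cco$ do not embed into $L^\infty$, since $Z_3=\varphi\partial_z$ vanishes at the boundary and gives no control on normal traces. Any attempt to apply \eqref{EQ.emb} with $f=\omega$ forces a factor of $\|\partial_z\omega\|$ in an $L^2$-based norm, i.e.\ second normal derivatives of $u$, which are not uniformly bounded in $\nu$ (only $\nu\int_0^t\|D^2 u\|_m^2$ is). The paper instead works with the shifted quantity $\eta=\omega_\hh-2\mu u_\hh^\perp$ of \eqref{eta}, which solves the transport--diffusion problem~\eqref{EQ.eta} with \emph{homogeneous Dirichlet} data; the maximum principle then yields directly $\tfrac{d}{dt}\|\eta\|_{L^\infty}\lesssim(\|\eta\|_{L^\infty}+\|u\|_{1,\infty})^2+\|p\|_{1,\infty}$ (see~\eqref{EQ101}), and $\|\partial_z u\|_{L^\infty}$ is recovered from $\|\eta\|_{L^\infty}$ via~\eqref{EQ62}. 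This evolution-by-maximum-principle step is the missing ingredient; the zero boundary condition on $\eta$ (rather than on $\omega$) is precisely what makes the argument clean in the viscous case. Once this is in place, your assembly proceeds as you describe.
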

\colb
 
 We prove Proposition~\ref{P.Ap} in four steps. In Sections~\ref{sec.co}, \ref{sec.no},
 and \ref{sec.p}, we derive estimates for conormal derivatives of $u$, $\nabla u$,
 and~$\nabla p$.
 Then, in Section~\ref{sec.inf}, we perform $L^\infty$ estimates for $u$ and~$\nabla u$.
 We conclude the proof of Proposition~\ref{P.Ap}, Theorem~\ref{T01}, and
 Theorem~\ref{T03}(i) in Section~\ref{sec.apri}.
 Next, we establish 
 the inviscid limit in Section~\ref{sec.main} by proving Theorems~\ref{T02}
 and~\ref{T03}(ii). Finally, in Section~\ref{sec.e}, we sketch the proof of Theorem~\ref{T04}.
 
 \startnewsection{Conormal Derivative Estimates}{sec.co}
 
 In this section, we estimate~$\Vert u\Vert_{4}$.
 
 \cole
 \begin{Proposition}
  \label{P.Con}
  Let $u$ be a smooth solution of \eqref{NSE0}--\eqref{hnavierbdry} on~$[0,T]$.
  Then we have the inequality
  \begin{align}
   \Vert u(t)\Vert_{4}^2
   +
   c_0 \nu\int_0^t
   \Vert \nabla u(s)\Vert_{4}^2 \,ds
   \lec
   \Vert u_0\Vert_{4}^2
   +\int_0^t \bigl(
   \Vert u(s)\Vert_{4}^2 
   (\Vert u(s)\Vert_{W^{1,\infty}}+
   \Vert u(s)\Vert_{2,\infty}+1
   )+\Vert u(s)\Vert_{4}\Vert \nabla p(s)\Vert_{3} \bigr)\,ds
   ,
   \label{EQ.Con}
  \end{align}
where $c_0>0$ is independent of~$\nu$. 
 \end{Proposition}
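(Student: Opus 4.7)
\medskip

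\noindent\textbf{Proof proposal.}
The plan is a weighted energy estimate in the conormal space $H^4_\cco$. For each multi-index $\alpha$ with $|\alpha|\le 4$, written as $\alpha = (\tilde\alpha,k)$ in the notation \eqref{ZhZ3}, I apply $Z^\alpha$ to \eqref{NSE0} and pair the result with $Z^\alpha u$ in $L^2(\Omega)$, then sum. The time derivative produces $\tfrac12\frac{d}{dt}\Vert u\Vert_{4}^2$, and the core of the argument consists of controlling three terms: the viscous term, the nonlinear transport, and the pressure contribution, in such a way that no conormal norm of $\nabla u$ appears on the right-hand side.

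\emph{Viscous and pressure terms.} For the viscous term $-\nu\int_\Omega \Delta Z^\alpha u\cdot Z^\alpha u$, I use Lemma~\ref{L01}(iii)--(iv) to commute $Z_3^k$ with $\partial_{zz}$ and integrate by parts. The principal part gives $\nu\Vert\nabla Z^\alpha u\Vert_{L^2}^2$, and the Navier boundary condition \eqref{hnavierbdry} produces the boundary term $2\nu\mu\int_{\{z=0\}}|Z^{\tilde\alpha}_\hh u_\hh|^2$ (the case $k\ge 1$ vanishes since $Z_3$ contains $\varphi$). For $\mu<0$, this boundary term is absorbed into a small fraction of the dissipation by the standard trace inequality $\Vert f\Vert_{L^2(\{z=0\})}^2 \lec \Vert f\Vert_{L^2}\Vert \partial_z f\Vert_{L^2} + \Vert f\Vert_{L^2}^2$ and Young's inequality, leaving the coercive constant~$c_0>0$. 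The commutator remainders from Lemma~\ref{L01}(iii) are of the form $\nu(\partial_z Z_3^j u)$ with $j<k$, which pair into the dissipation via Cauchy--Schwarz and Young. For the pressure, I split $\alpha$ into purely tangential and $Z_3$-carrying parts. When $\alpha=\tilde\alpha$ is tangential, $\int Z^{\tilde\alpha}\nabla p \cdot Z^{\tilde\alpha} u = 0$ after one integration by parts using $\nabla\cdot u=0$ and $u_3|_{z=0}=0$. When $\alpha$ contains $Z_3^k$, I integrate by parts in the horizontal direction and apply Lemma~\ref{L01}(i)--(ii) to swap $Z_3$ and $\partial_z$; the net contribution is bounded by $\Vert\nabla p\Vert_{3}\Vert u\Vert_{4}$, which is exactly the pressure term appearing on the right of \eqref{EQ.Con}.

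\emph{Transport term and the crucial normal-derivative piece.} The commutator $Z^\alpha(u\cdot\nabla u) - u\cdot\nabla Z^\alpha u$ is expanded by Leibniz and split into tangential contributions $Z^\beta u_\hh \cdot \nabla_\hh Z^{\alpha-\beta}u$ and a normal contribution $Z^\beta u_3\,\partial_z Z^{\alpha-\beta} u$. The tangential pieces are handled by the anisotropic estimate Lemma~\ref{L02} combined with \eqref{EQ.int}, \eqref{EQ.emb}, \eqref{EQ.u3div2}, and \eqref{EQ15}, which distribute derivatives so that the low-order factor is controlled in $\Vert u\Vert_{W^{1,\infty}}$ or $\Vert u\Vert_{2,\infty}$ and the high-order factor in $\Vert u\Vert_{4}$. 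The normal piece is the delicate one: before distributing derivatives, I rewrite $u_3\,\partial_z = (u_3/\varphi)\,Z_3$, so that the factor of a derivative of $u$ is exchanged for a conormal derivative at the price of the extra factor $u_3/\varphi$, which is bounded in $L^\infty$ by $\Vert Z_\hh u_\hh\Vert_{L^\infty}\le \Vert u\Vert_{1,\infty}$ via \eqref{EQ.u3}. The main obstacle is precisely here: one must perform this rewriting \emph{before} applying the Leibniz rule, since otherwise the most-differentiated term $\partial_z Z^{\alpha-\beta}u$ would force $\Vert\nabla u\Vert_{|\alpha-\beta|}$ onto the right-hand side, defeating the purpose of \eqref{EQ.Con}. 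After this manoeuvre, every remaining high-order factor lies in $\Vert u\Vert_{4}$ and every low-order factor is absorbed into $\Vert u\Vert_{W^{1,\infty}}+\Vert u\Vert_{2,\infty}+1$.

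Summing over $|\alpha|\le 4$, absorbing the trace, commutator, and $\mu$-boundary contributions into a fixed fraction $c_0\nu$ of the dissipation, and integrating in time from $0$ to $t$ yields \eqref{EQ.Con}.
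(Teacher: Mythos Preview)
Your overall strategy matches the paper's: apply $Z^\alpha$ to \eqref{NSE0}, test against $Z^\alpha u$, and control the viscous, pressure, and convection commutators so that only $\Vert u\Vert_4$, $\Vert u\Vert_{W^{1,\infty}}$, $\Vert u\Vert_{2,\infty}$, and $\Vert\nabla p\Vert_3$ appear on the right. The treatment of the viscous boundary term, the Laplacian commutator via Lemma~\ref{L01}, and the pressure term are all essentially as in the paper (the paper organizes the absorption of the lower-order dissipation $\nu\Vert\nabla u\Vert_3^2$ by an induction on $|\alpha|$, but summing with suitable weights works equally well).

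There is, however, a genuine slip in your handling of the normal convection piece. You write that one must rewrite $u_3\partial_z=(u_3/\varphi)Z_3$ \emph{before} applying Leibniz. If you do this, the commutator produces the term $Z^\alpha(u_3/\varphi)\cdot Z_3 u$ when $|\beta|=|\alpha|=4$, and by \eqref{EQ.u3} one has $\Vert Z^\alpha(u_3/\varphi)\Vert_{L^2}\lec\Vert Z_\hh u_\hh\Vert_4$, which is a fifth-order conormal norm of $u$ and is not available. The paper avoids this by applying Leibniz \emph{first} and then using the $\varphi$-trick selectively: for $1\le|\beta|\le 3$ one multiplies and divides by $\varphi$ (so that $Z^\beta(u_3/\varphi)$ costs at most $\Vert u\Vert_4$ via \eqref{EQ.u3}), while for $|\beta|=4$ one simply estimates
\[
\Vert Z^\beta u_3\,\partial_z u\Vert_{L^2}\lec \Vert u\Vert_4\Vert\partial_z u\Vert_{L^\infty},
\]
which is precisely where the factor $\Vert u\Vert_{W^{1,\infty}}$ in \eqref{EQ.Con} is needed. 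So the $\varphi$-trick is applied after Leibniz, term by term, and not to the top-order term; your claim that ``every remaining high-order factor lies in $\Vert u\Vert_4$'' fails for that one term unless you proceed this way. With this correction the argument closes as you outlined.
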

 \colb
 
 \begin{proof}[Proof of Proposition~\ref{P.Con}]
  We proceed by induction on $0\le|\alpha|\le 4$ and
  first consider $|\alpha| = 0$,
which coincides with the usual $L^2$ estimate
  \begin{align}
   \frac{1}{2}\frac{d}{dt}\Vert u\Vert_{L^2}^2
   +\nu \Vert \nabla u\Vert_{L^2}^2
   = -2\mu \nu \Vert u_h\Vert_{L^2(\partial \Omega)}^2
   ,\label{L^2}
  \end{align}
where the boundary term results from 
$
   -\nu\int u\Delta u \,dx
   =\nu \Vert \nabla u\Vert_{L^2}^2
   +\int_{\partial \Omega} \partial_z u u \,d\sigma,
$
since $u \cdot n = 0$ and $n = (0,0,-1)$ on~$\partial \Omega$.
Using \eqref{hnavierbdry},
it follows that
  \begin{align}
   -\nu\int u\Delta u \,dx
   =\nu \Vert \nabla u\Vert_{L^2}^2
   +2\mu \nu \Vert u\Vert_{L^2(\partial \Omega)}^2,
   \label{b2L^2}
  \end{align}
  which is a coercive term, for~$\mu \ge 0$.
  On the other hand, when $\mu < 0$ we use the trace theorem 
  and Young's inequality to obtain
  \begin{align}
   2|\mu| \nu \Vert u\Vert_{L^2(\partial \Omega)}^2
   \le
   \epsilon \nu \Vert \nabla u\Vert_{L^2}^2
   +C_\epsilon \mu^2 \nu \Vert u\Vert_{L^2}^2
   \le
   \epsilon \nu \Vert \nabla u\Vert_{L^2}^2
   +C_\epsilon\Vert u\Vert_{L^2}^2
   ,\label{b3L^2}
  \end{align}
  where $\epsilon>0$ is arbitrarily small.
  Therefore, combining \eqref{L^2} and \eqref{b3L^2}
  and integrating in time, we get
  \begin{align}
   \Vert u(t)\Vert_{L^2}
   +c_0 \nu\int_0^t \Vert \nabla u\Vert_{L^2}^2 \,ds
   \lec
   \Vert u_0\Vert_{L^2} + \int_0^t \Vert u(s)\Vert_{L^2}^2 \,ds
   ,\label{0L^2}
  \end{align}
  concluding the base step.
  For the remaining part of the induction, we only 
  present the step from $|\alpha|=3$ to~$|\alpha|=4$.
  Thus, we assume that
  \begin{align}
   \Vert u\Vert_{3}^2 
   +c_0 \nu \int_0^t \Vert \nabla u\Vert_{3}^2 \,ds
   \lec \Vert u_0\Vert_{3}^2 + 
   \int_0^t \Bigl( \Vert u\Vert_{3}^2 
   (\Vert u\Vert_{W^{1,\infty}}+
   \Vert u\Vert_{2,\infty}+1
   )
   + \Vert u\Vert_{3}\Vert \nabla p\Vert_{2} \Bigr)\,ds
   \label{EQ02} 
  \end{align}
and aim to show~\eqref{EQ.Con}.
We first consider the case when all four conormal derivatives are horizontal, 
i.e., $Z^\alpha = Z^{\tilde{\alpha}}_\hh$ for $|\alpha|=4$, 
and then the general case. Applying $Z^\alpha$ to \eqref{NSE0}$_1$, we obtain
  \begin{align}
   \partial_t Z^{\tilde{\alpha}}_\hh u
   -\nu \Delta Z^{\tilde{\alpha}}_\hh u
   + u \cdot \nabla Z^{\tilde{\alpha}}_\hh u
   + \nabla Z^{\tilde{\alpha}}_\hh p
   =
   u \cdot \nabla Z^{\tilde{\alpha}}_\hh u
   -
   Z^{\tilde{\alpha}}_\hh(u \cdot \nabla u)
   ,
   \label{EQ01}
  \end{align}
  since $Z_h \nabla = \nabla Z_h$.
  We test \eqref{EQ01} with $Z^{\tilde{\alpha}}_\hh u$ and write
  \begin{align}
   \frac{1}{2}\frac{d}{dt}\Vert Z^{\tilde{\alpha}}_\hh u\Vert_{L^2}^2
   +\nu\Vert \nabla Z^{\tilde{\alpha}}_\hh u\Vert_{L^2}^2
   +2\mu \nu \Vert Z^{\tilde{\alpha}}_\hh u\Vert_{L^2(\partial \Omega)}^2
   =
   -
   ( u \cdot \nabla Z^{\tilde{\alpha}}_\hh u
   -Z^{\tilde{\alpha}}_\hh(u \cdot \nabla u)
   ,Z^{\tilde{\alpha}}_\hh u)
   .
   \label{EQ54}
  \end{align} 
  Note that to obtain \eqref{EQ54},
  we used incompressibility,
  while as in \eqref{b2L^2}, the diffusion term leads to 
  \begin{align}
   -\nu\int \Delta Z^{\tilde{\alpha}}_\hh u Z^{\tilde{\alpha}}_\hh u\,dx
   =\nu \Vert \nabla Z^{\tilde{\alpha}}_\hh u\Vert_{L^2}^2
   +\int_{\partial \Omega} \partial_z Z^{\tilde{\alpha}}_\hh u Z^{\tilde{\alpha}}_\hh u \,d\sigma
   =\nu \Vert \nabla Z^{\tilde{\alpha}}_\hh u\Vert_{L^2}^2
   +2\mu \nu \Vert Z^{\tilde{\alpha}}_\hh u\Vert_{L^2(\partial \Omega)}^2 
   ,\label{EQ142} 
  \end{align}
  where the last equality follows by 
  applying $Z^{\tilde{\alpha}}_\hh$ to~\eqref{hnavierbdry}.
  For the boundary term, as in \eqref{b3L^2}, we use the trace theorem 
  and Young's inequality to obtain, 
  \begin{align}
   |\mu| \nu \Vert Z^{\tilde{\alpha}}_\hh u\Vert_{L^2(\partial \Omega)}^2
   \le
   \epsilon \nu \Vert \nabla Z^{\tilde{\alpha}}_\hh u\Vert_{L^2}^2
   +C_\epsilon\Vert u\Vert_{4}^2
   ,\label{EQ144}
  \end{align}
  where $\epsilon>0$ is arbitrarily small.
  Therefore, we only have to estimate the commutator term,
  which we expand as
  \begin{align}
   u \cdot \nabla Z^{\tilde{\alpha}}_\hh u
   -
   Z^{\tilde{\alpha}}_\hh(u \cdot \nabla u)
   =
   -\sum_{1\le |\tilde{\beta}| \le 4} 
   {\tilde{\alpha} \choose \tilde{\beta}}
   (Z^{\tilde{\beta}}_\hh u_\hh 
   \cdot 
   \nabla_\hh Z^{\tilde{\alpha}-\tilde{\beta}}_\hh u
   + 
   Z^{\tilde{\beta}}_\hh u_3 
   \partial_z Z^{\tilde{\alpha}-\tilde{\beta}}_\hh u)
   .
   \label{EQ45}                              
  \end{align}
  For the terms involving $\nabla_\hh = (\partial_1,\partial_2)$, 
  we take the uniform norm of the factor with the lower number of derivatives, which gives
  \begin{align}
   \Vert 
   Z^{\tilde{\beta}}_\hh u_\hh 
   \cdot 
   \nabla_\hh Z^{\tilde{\alpha}-\tilde{\beta}}_\hh u
   \Vert_{L^2}
   \lec
   \Vert u\Vert_{2,\infty}\Vert u\Vert_{4}.
   \label{EQ30}
  \end{align} 
For the remaining terms in \eqref{EQ45},
  when $|\tilde{\beta}|\neq 4$, we first divide and multiply by $\varphi$ to write
  \begin{equation}
   \label{EQ143}
   \left\Vert Z_\hh^{\tilde{\beta}} \frac{u_3}{\varphi} Z_3 Z_\hh^{\tilde{\alpha} - {\tilde{\beta}}} u\right\Vert_{L^2}  
   \lec 
   \begin{cases}
    \bigl\Vert Z \frac{u_3}{\varphi}\bigr\Vert_{L^\infty}\Vert u\Vert_{4}
    \lec \Vert u\Vert_{2,\infty}\Vert u\Vert_{4}, & |{\tilde{\beta}}|=1 \\
    \bigl\Vert Z_\hh \frac{u_3}{\varphi}\bigr\Vert_{L^\infty}\Vert Z_\hh Z_3 u\Vert_{2}
    +\left\Vert Z_\hh \frac{u_3}{\varphi}\right\Vert_{2}\Vert Z_\hh Z_3 u\Vert_{L^\infty}
    \lec \Vert u\Vert_{4}\Vert u\Vert_{2,\infty}, & |{\tilde{\beta}}|=2 \\
    \left\Vert Z_\hh \frac{u_3}{\varphi}\right\Vert_{2}\Vert Z_\hh Z_3 u\Vert_{L^\infty}
    \lec \Vert u\Vert_{4}\Vert u\Vert_{2,\infty}, &   |{\tilde{\beta}}|=3,
   \end{cases}
  \end{equation}
  where we have used \eqref{EQ.int}
  upon taking $(f,g)=(Z_\hh \frac{u_3}{\varphi}, Z_\hh Z_3 u)$
  for $|\tilde{\beta}|=2$, and~\eqref{EQ.u3} to bound $Z_\hh \frac{u_3}{\varphi}$.
  Finally, when $|\tilde{\beta}| = 4$, we immediately have 
  \begin{align}
   \Vert Z_\hh^{\tilde{\beta}} u_3 \partial_z Z_\hh^{\tilde{\alpha} - {\tilde{\beta}}} u\Vert_{L^2}  
   \lec 
   \Vert u\Vert_{4}\Vert \partial_z u\Vert_{L^\infty}
   \comma   |{\tilde{\beta}}|=4
   .\label{EQ33} 
  \end{align}
  Now, combining \eqref{EQ54}--\eqref{EQ33} and
  absorbing the factor of $\nu\Vert \nabla Z^{\tilde{\alpha}}_\hh u\Vert_{L^2}^2$, we obtain
  \begin{align}
   \begin{split}
   \frac{d}{dt}\Vert Z^{\tilde{\alpha}}_\hh u\Vert_{L^2}^2 
   +
   c_0\nu
   \Vert \nabla Z^{\tilde{\alpha}}_\hh u\Vert_{L^2}^2
   &
   \lec
   \Vert u\Vert_{4}
   (\Vert u\Vert_{4}+ 
   \Vert 
   u \cdot \nabla Z^{\tilde{\alpha}}_\hh u
   -
   Z^{\tilde{\alpha}}_\hh(u \cdot \nabla u)
   \Vert_{L^2}
   )
   \\&
   \lec
   \Vert u\Vert_{4}^2
   (\Vert u\Vert_{2,\infty}+\Vert u\Vert_{W^{1,\infty}}+1)
   ,  
   \label{EQ38}   
  \end{split}
  \end{align}
  where $c_0 >0$ is independent of~$\nu$.
  Integrating \eqref{EQ38} implies 
  \begin{align}
   \Vert Z^{\tilde{\alpha}}_\hh u(t)\Vert_{L^2}^2 
   &+
   c_0\nu\int_0^t
   \Vert \nabla Z^{\tilde{\alpha}}_\hh u(s)\Vert_{L^2}^2\,ds
   \lec
   \Vert Z^{\tilde{\alpha}}_\hh u_0\Vert_{L^2}^2
   +\int_0^t \Vert u(s)\Vert_{4}^2
   (\Vert u(s)\Vert_{2,\infty}+\Vert u(s)\Vert_{W^{1,\infty}}+1)\,ds
   ,\llabel{EQ04}
  \end{align}
  for~$t \in [0,T]$.
  
  Proceeding to the general case,
  we consider $Z^\alpha = Z^{\tilde{\alpha}}_\hh Z^k_3$ 
  where $1\le k \le 4$, and apply it to \eqref{NSE0} obtaining
  \begin{align}
   Z^\alpha u_t
   - \nu \Delta Z^\alpha u
   + u\cdot \nabla Z^\alpha u
   = 
   u\cdot \nabla Z^\alpha u - Z^\alpha(u\cdot \nabla u) 
   - Z^\alpha \nabla p
   + \nu Z^\alpha \Delta u
   - \nu \Delta Z^\alpha u
   .
   \label{EQ46}
  \end{align}
  Upon testing with $Z^\alpha u$, 
  the left-hand side of \eqref{EQ46} gives
  \begin{align}
   \frac{1}{2}\frac{d}{dt}\Vert Z^\alpha u\Vert_{L^2}^2
   +\nu \Vert \nabla Z^\alpha u \Vert_{L^2}^2.
   \label{EQ55}
  \end{align}
  Observe that there is no boundary term arising from $-\nu \Delta Z^\alpha u$
  since $Z_3 = \varphi \partial_z = 0$ on~$\partial \Omega$.
  As for the right-hand side of \eqref{EQ46}, 
  we first expand the convection term as
  \begin{align}
   \begin{split}
    u \cdot \nabla Z^\alpha u - Z^\alpha(u \cdot \nabla u)
    &=
    u \cdot \nabla Z^\alpha u - u \cdot Z^\alpha \nabla u
    -
    \sum_{1 \le |\beta|\le |\alpha|}
    {\alpha \choose \beta}
    Z^\beta u \cdot Z^{\alpha-\beta} \nabla u
    \\&=
    u_3 \partial_z Z^\alpha u - u_3 Z^\alpha \partial_z u    
    -
    \sum_{1\le |\beta|\le |\alpha|}
    {\alpha \choose \beta}
    Z^\beta u \cdot Z^{\alpha-\beta} \nabla u
    \\&=
    -
    \sum_{j=0}^{k-1}
    \tilde{c}^k_{j,\varphi} u_3 \partial_z Z^{\tilde{\alpha}}_\hh Z^j_3 u  
    -
    \sum_{1\le |\beta|\le |\alpha|}
    {\alpha \choose \beta}
    Z^\beta u \cdot Z^{\alpha-\beta} \nabla u
    \\&= I_1 + I_2
    .
    \label{EQ13}        
   \end{split}
  \end{align}
  To estimate $I_1$, we divide and multiply by $\varphi$, obtaining
  \begin{align}
   \sum_{j=0}^{k-1}
   \left\Vert \tilde{c}^k_{j,\varphi} \frac{u_3}{\varphi} Z^{\tilde{\alpha}}_\hh Z^{j+1}_3 u\right\Vert_{L^2}  
   \lec
   \left\Vert \frac{u_3}{\varphi}\right\Vert_{L^\infty}\Vert u\Vert_{4}
   \lec
   \Vert \partial_z u\Vert_{L^\infty}\Vert u\Vert_{4}
   ,
   \label{EQ56}
  \end{align}
  using~\eqref{EQ.u3}.
  As for $I_2$, we rewrite  
  \begin{align}
   I_2 = -
   \sum_{1\le |\beta| \le |\alpha|}
   {\alpha \choose \beta} 
   (Z^{\beta} u_\hh 
   \cdot 
   \nabla_\hh Z^{\alpha-\beta} u
   + 
   Z^{\beta} u_3 
   Z^{\alpha-\beta} \partial_z u)
   = I_{21} + I_{22}
   .
   \label{EQ57}
  \end{align}
  The sum $I_{21}$ is estimated
  using the bounds in~\eqref{EQ30}.
  Next, to bound $I_{22}$, we proceed as in \eqref{EQ143}--\eqref{EQ33},
  which gives lower order terms when~$|\beta|\neq 4$. 
  In particular, since
  \begin{align}
   \frac{Z_3 u_3}{\varphi} = \nabla_\hh \cdot u_\hh,
   \llabel{EQ32}
  \end{align}
  and $1/\varphi$ commutes with $Z_h$, we only commute $\partial_z$ and $Z_3$ as in
  \eqref{EQL02}(i), obtaining lower order terms.
  Omitting further details, we conclude
  \begin{align}
   (u \cdot \nabla Z^\alpha u - Z^\alpha(u \cdot \nabla u),
   Z^\alpha u)
   \lec 
   \Vert u\Vert_{4}
   (\Vert u\Vert_{2,\infty}+\Vert u\Vert_{W^{1,\infty}}).
   \label{EQ58}
  \end{align}
  Returning to the right-hand side of \eqref{EQ46},
  it remains to estimate the pressure term and the commutator term including the Laplacian.
  We start by testing the pressure term against $Z^\alpha u$, obtaining
  \begin{align}
   (Z^\alpha \nabla p, Z^\alpha u)
   =
   (Z^\alpha \nabla_\hh p, Z^\alpha u_\hh)
   +
   (Z^\alpha \partial_z p, Z^\alpha u_3) 
   .
   \label{EQ47}
  \end{align}
  Since $Z_3$ and $\partial_z$ do not commute, we aim to introduce lower order terms and
  gain a derivative using incompressibility. To achieve this, we first use Lemma~\ref{L01}(i) and write
  \begin{align}
   (Z^k_3 \partial_z Z^{\tilde{\alpha}}_\hh p, Z^\alpha u_3)
   =
   (\partial_z Z^k_3  Z^{\tilde{\alpha}}_\hh p, Z^\alpha u_3)
   +
   \sum_{j=0}^{k-1}
   (c^k_{j,\varphi} \partial_z Z^j_3  Z^{\tilde{\alpha}}_\hh p, Z^\alpha u_3) 
   .
   \label{EQ48}
  \end{align}
  Note that each term under the sum is bounded by $\Vert \nabla p\Vert_{3}\Vert u\Vert_{4}$.
  For the leading order term, we integrate by parts and write
  \begin{align}
   (\partial_z Z^k_3 Z^{\tilde{\alpha}}_\hh p, Z^\alpha u_3) 
   =
   (Z^\alpha p, \partial_z Z^\alpha u_3)
   =
   (Z^\alpha p, Z^\alpha \partial_z  u_3)
   -
   \sum_{j=0}^{k-1}
   (c^k_{j,\varphi} Z^\alpha p, Z^j_3 Z^{\tilde{\alpha}}_\hh \partial_z u_3) 
   ,
   \label{EQ49}
  \end{align}
  as there is no boundary term due to $Z_3 f = 0$
  on $\partial \Omega$ for any sufficiently smooth function~$f$. 
  Moreover, each term under the sum is bounded by $\Vert \nabla p\Vert_{3}\Vert u\Vert_{4}$.
  Since $Z^\alpha \nabla_\hh \cdot u_\hh + Z^\alpha \partial_z u_3 = 0$,
  \eqref{EQ47}--\eqref{EQ49} imply
  \begin{align}
   (Z^\alpha \nabla p, Z^\alpha u)
   \lec 
   \Vert \nabla p\Vert_{3}\Vert u\Vert_{4}
   ,
   \label{EQ50}
  \end{align}
  leaving us only the commutator term for the Laplacian. 
  Using Lemma~\ref{L01}, we rewrite 
  \begin{align}
   \begin{split}
    \nu Z^\alpha \Delta u
    - \nu \Delta Z^\alpha u
    =&
    \nu Z^\alpha \partial_{zz} u - \nu \partial_{zz} Z^\alpha u
    =
    \nu Z^{\tilde{\alpha}}_\hh (Z_3^k \partial_{zz} u - \partial_{zz} Z_3^k u)
    \\
    =&
    \nu
    \sum_{j=0}^{k-1}
    \sum_{l=0}^{j}
    \left(
    c^j_{l,\varphi} c^k_{j,\varphi} \partial_{zz} Z^l_3 Z^{\tilde{\alpha}}_\hh u 
    +
    (c^j_{l,\varphi})' c^k_{j,\varphi} \partial_{z} Z^l_3 Z^{\tilde{\alpha}}_\hh u
    \right)
    \\
    &+
    \nu
    \sum_{l=0}^{k-1}
    \left(
    c^k_{l,\varphi}\partial_{zz} Z^l_3 Z^{\tilde{\alpha}}_\hh u  
    +
    (c^k_{l,\varphi})'\partial_{z} Z^l_3 Z^{\tilde{\alpha}}_\hh u 
    \right)
    \label{EQ51}
    .
   \end{split}
  \end{align}
  Note that the highest-order term in \eqref{EQ51}, i.e.,
  $(j,l)=(k-1,j)$ for the first sum and 
  $l=k-1$ for the second sum, is
  \begin{align}
   2\nu c^k_{k-1,\varphi}
   \partial_{zz} Z^{k-1}_3 Z^{\tilde{\alpha}}_\hh u,
   \llabel{EQ52}
  \end{align}
and when we integrate it against $Z^\alpha u$, we obtain
  \begin{align}
   \begin{split}
    2\nu
    \int c^k_{k-1,\varphi}
    \partial_{zz} Z^{k-1}_3 Z^{\tilde{\alpha}}_\hh u
    Z^\alpha u
    \,ds
    &=
    -2\nu
    \int 
    \partial_{z} Z^{k-1}_3 Z^{\tilde{\alpha}}_\hh u
    \left(
    c^k_{k-1,\varphi} \partial_z Z^\alpha u                     
    + 
    (c^k_{k-1,\varphi})' Z^\alpha u  
    \right)
    \,ds
    \\ &\lec
    \nu
    \Vert \partial_{z} Z^{k-1}_3 Z^{\tilde{\alpha}}_\hh u\Vert_{L^2}
    (\Vert \partial_z Z^\alpha u\Vert_{L^2}
    +
    \Vert u\Vert_{4})
    \\ &     \le
    C_\epsilon\nu\Vert \partial_{z} Z^{k-1}_3 Z^{\tilde{\alpha}}_\hh u\Vert_{L^2}^2
    +
    \epsilon\nu
    (\Vert \partial_z Z^\alpha u\Vert_{L^2}^2
    +
    \Vert u\Vert_{4}^2)
    ,
    \label{EQ53}
   \end{split}
  \end{align}
  where $\epsilon>0$ is an arbitrarily small parameter. 
  Repeating this procedure for the
  remaining terms in \eqref{EQ51} yields
  \begin{align}
   (\nu Z^\alpha \Delta u
   - \nu \Delta Z^\alpha u, Z^\alpha u)
   \le 
   \epsilon\nu\Vert \partial_z Z^\alpha u\Vert_{L^2}^2
   + 2C_\epsilon \nu \Vert \nabla u\Vert_{3}^2 + \Vert u\Vert_{4}^2
   .
   \label{EQ61}
  \end{align}
  Therefore, collecting \eqref{EQ46}, \eqref{EQ55}, \eqref{EQ58}, \eqref{EQ50}, \eqref{EQ61},
  absorbing all the factors of 
  $\nu\Vert \partial_z Z^\alpha u\Vert_{L^2}^2$, and integrating in time,
  we obtain 
  \begin{align}
   \begin{split}
    &\Vert Z^\alpha u(t)\Vert_{L^2}^2
    +c_0 \nu \int_0^t \Vert \nabla Z^\alpha u(s)\Vert_{L^2}^2 \,ds
    \\&
    \lec
    \Vert Z^\alpha u_0\Vert_{L^2}^2
    +\int_0^t (\nu\Vert \nabla u(s)\Vert_{3}^2  
    +\Vert u(s)\Vert_{4}^2 
    (\Vert u(s)\Vert_{W^{1,\infty}}+
    \Vert u(s)\Vert_{2,\infty}+1
    ) 
    + \Vert u(s)\Vert_{4}\Vert \nabla p(s)\Vert_{3}) \,ds
    .\label{EQ05}  
   \end{split}
  \end{align} 
 Note that the inductive hypothesis \eqref{EQ02} implies
  \begin{align}
   \nu \int_0^t \Vert \nabla u(s)\Vert_{3}^2 \,ds
   \lec
   \int_0^t \Bigl(\Vert u(s)\Vert_{3}^2 
   (\Vert u(s)\Vert_{W^{1,\infty}}+
   \Vert u(s)\Vert_{2,\infty}+1
   )  
   + \Vert u(s)\Vert_{3}\Vert \nabla p(s)\Vert_{2}\Bigr) \,ds
   .\llabel{EQ006}
  \end{align}
  Consequently, using this in \eqref{EQ05} and summing with \eqref{EQ02}
  yields~\eqref{EQ.Con}.
 \end{proof}
 
 \startnewsection{Normal Derivative Estimates}{sec.no}
 
 In this section, our goal is to estimate $\partial_z u$ in~$H^2_\cco(\Omega)$.
 However, as in~\cite{MR1}, instead of studying this term directly, it is more convenient
 to work with
 \begin{align}
  \eta = \omega_\hh - 2\mu u_\hh^\perp,
  \label{eta}
 \end{align}
 where $\omega = \curl u$ and 
 $u_\hh^\perp = (-u_2,u_1)^T$. 
 Observe that $\eta$ solves
 \begin{align}
  \begin{split}  
   \eta_t
   -\nu \Delta \eta
   +u\cdot \nabla \eta
   &=
   \omega\cdot \nabla u_\hh
   +2\mu \nabla_\hh^\perp p
   \comma x \in \Omega
   \\
   \eta &= 0
   \comma z=0
   ,
   \label{EQ.eta}   
  \end{split}
 \end{align}
 where $\nabla_\hh^\perp p = (-\partial_2,\partial_1)^T$.
 The main advantage of analyzing $\eta$, rather than $\omega$,
 is that it vanishes on the boundary.
Moreover, 
 \begin{align}
  \Vert \partial_z u\Vert_{m}
  \lec
  \Vert \eta\Vert_{m}+\Vert u\Vert_{m+1}
  \comma
  \Vert \partial_z u\Vert_{L^\infty}
  \lec
  \Vert \eta\Vert_{L^\infty}+\Vert u\Vert_{1,\infty},
  \label{EQ62}
 \end{align}
 showing that the first-order normal derivative of $u$
 is controlled by the conormal derivatives of $u$ and~$\eta$.
 Due to \eqref{EQ62}, we may replace the term 
 $\Vert u\Vert_{W^{1,\infty}}$
 in \eqref{EQ.Con} by~$\Vert \eta\Vert_{L^\infty}$.
 We are now ready to state the main result of this section.
 
 \cole
 \begin{Proposition}
  \label{P.Nor}
  Let $u$ be a smooth solution of \eqref{NSE0}--\eqref{hnavierbdry} on~$[0,T]$.
  Then we have the inequality
  \begin{align}
   \begin{split}
    &\Vert \eta(t)\Vert_{m}^2
    +
    c_0 \nu \int_0^t
    \Vert \nabla \eta(s)\Vert_{m}^2 \,ds
    \\&
    \lec
    \Vert \eta_0\Vert_{m}^2
    + \int_0^t (
    \Vert \eta\Vert_{m}^2
    (\Vert u\Vert_{2,\infty}+\Vert \eta\Vert_{L^\infty})
    +\Vert \eta\Vert_{m}
    (\Vert u\Vert_{m+2}\Vert \eta\Vert_{L^\infty}
    +\Vert u\Vert_{m+1}\Vert u\Vert_{1,\infty}
    +\Vert \nabla p\Vert_{m})
    \,ds
    \label{EQ.Nor}
   \end{split}
  \end{align}
on $[0,T]$ for $m=1,2$, where $c_0>0$ is independent of~$\nu$. 
\end{Proposition}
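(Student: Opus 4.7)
The plan is to carry out $Z^\alpha$-weighted $L^2$ energy estimates on the $\eta$-equation~\eqref{EQ.eta} for every $|\alpha|\le m$, proceeding by induction on $|\alpha|$. The structural advantage over Proposition~\ref{P.Con} is the homogeneous Dirichlet condition $\eta|_{z=0}=0$: since $Z_3=\varphi\partial_z$ vanishes at $z=0$ and $Z_h$ is tangential, the property $Z^\alpha\eta|_{z=0}=0$ is preserved, so integrating $-\nu\Delta Z^\alpha\eta$ against $Z^\alpha\eta$ produces only the coercive $\nu\|\nabla Z^\alpha\eta\|_{L^2}^2$ with no boundary contribution. Testing \eqref{EQ.eta} against $Z^\alpha\eta$ then yields $\tfrac12\frac{d}{dt}\|Z^\alpha\eta\|_{L^2}^2+\nu\|\nabla Z^\alpha\eta\|_{L^2}^2$ on the left, and the leading transport piece $(u\cdot\nabla Z^\alpha\eta,Z^\alpha\eta)$ vanishes by $\div u=0$ together with $u\cdot n=0$.

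The transport commutator $[Z^\alpha,u\cdot\nabla]\eta$ is treated in the spirit of Proposition~\ref{P.Con}: its $u_3\partial_z\eta$ piece is rewritten by factoring $u_3/\varphi$ and controlling this quantity through~\eqref{EQ.u3}, while each time $\alpha$ contains a $Z_3$ factor, Lemma~\ref{L01} swaps $Z_3$ with $\partial_z$ at the cost of only lower-order conormal derivatives. The remaining Leibniz pieces $Z^\beta u\cdot Z^{\alpha-\beta}\nabla\eta$ are split via~\eqref{EQ.int}, placing the low-derivative factor in $L^\infty$ so as to generate only $\|u\|_{2,\infty}\|\eta\|_m$- or $\|\eta\|_{L^\infty}\|\eta\|_m$-type quantities. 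The Laplacian commutator $\nu([Z^\alpha,\Delta]\eta,Z^\alpha\eta)$ is handled as in~\eqref{EQ51}--\eqref{EQ53}: one integration by parts shifts a $\partial_z$ onto $Z^\alpha\eta$, Young's inequality absorbs a small multiple of $\nu\|\nabla Z^\alpha\eta\|_{L^2}^2$, and the subleading remainder of order $\nu\|\nabla\eta\|_{m-1}^2$ is controlled by the inductive hypothesis. Finally, the pressure term $2\mu(\nabla_h^\perp Z^\alpha p,Z^\alpha\eta)$ is dispatched by Cauchy-Schwarz as $\|\nabla p\|_m\|\eta\|_m$.

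The genuinely new difficulty is the vortex-stretching term $Z^\alpha(\omega\cdot\nabla u_h)$. I would first decompose $\omega_h=\eta+2\mu u_h^\perp$ and note that $\omega_3=\partial_1u_2-\partial_2u_1$ is purely tangential, so its conormal derivatives cost only one extra order of $u$. Leibniz then produces sums $Z^\beta\omega\cdot Z^{\alpha-\beta}\nabla u_h$; using~\eqref{EQ.int}, each factor involving $\nabla_h u_h$ is routine, while every factor involving $\partial_z u_h$ is traded, via~\eqref{EQ62}, for an $\eta$-contribution at the price of one additional conormal derivative of $u$. Placing $\|\eta\|_{L^\infty}$ or $\|u\|_{1,\infty}$ on the low-derivative factor is precisely what produces the structure $\|u\|_{m+2}\|\eta\|_{L^\infty}+\|u\|_{m+1}\|u\|_{1,\infty}$ on the right-hand side of~\eqref{EQ.Nor}. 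The main obstacle, I anticipate, is the bookkeeping at this step: every invocation of~\eqref{EQ62} that replaces a normal derivative of $u$ must be matched by a conormal derivative placed on $u$ without exceeding $m+2$ in total, and every $[Z_3,\partial_z]$ commutation from Lemma~\ref{L01} must not spawn a top-order $\|\nabla\eta\|_m$ term that cannot be absorbed into $c_0\nu\|\nabla\eta\|_m^2$. Once this accounting is arranged, integration in time and summation over $|\alpha|\le m$ closes~\eqref{EQ.Nor} for $m=1,2$.
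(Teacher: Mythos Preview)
Your proposal is correct and follows essentially the same approach as the paper's own proof: induction on $|\alpha|$, separate treatment of purely tangential versus $Z_3$-containing conormal derivatives, exploitation of $\eta|_{z=0}=0$ to kill boundary terms, the transport and Laplacian commutators handled exactly as in Proposition~\ref{P.Con} (with the inductive hypothesis absorbing the $\nu\|\nabla\eta\|_{m-1}^2$ remainder), and the vortex-stretching term bounded via~\eqref{EQ.int} together with the decompositions~\eqref{EQ07}--\eqref{EQ08} and~\eqref{EQ62}. The bookkeeping you anticipate does indeed close without exceeding $m+2$ conormal derivatives on $u$.
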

\colb
 
We note that \eqref{EQ.Nor} for $m=2$ is needed to prove the first part 
of Proposition~\ref{P.Ap}, while $m=1$ is needed for the second part.
 
 \begin{proof}[Proof of Proposition~\ref{P.Nor}]
  
  We proceed by induction on $0\le |\alpha| \le 2$.
  When $|\alpha| = 0$, the standard $L^2$ estimates yield
  \begin{align}
   \Vert \eta(t)\Vert_{L^2}^2
   +\nu \int_0^t
   \Vert \nabla \eta\Vert_{L^2}^2 \,ds
   \lec
   \Vert \eta_0\Vert_{L^2}^2
   +\int_0^t 
   (\Vert \omega(s)\Vert_{L^\infty}\Vert \nabla u(s)\Vert_{L^2}+\Vert \nabla p(s)\Vert_{L^2})
   \Vert \eta(s)\Vert_{L^2} \,ds
   .
   \label{EQ06}
  \end{align}
  Using \eqref{eta}, we obtain
  \begin{align}
   \Vert \omega_\hh\Vert_{L^\infty}\lec \Vert \eta\Vert_{L^\infty}+ \Vert u\Vert_{L^\infty}
   \text{ and }
   \Vert \omega_\hh\Vert_{m}\lec \Vert \eta\Vert_{m} + \Vert u\Vert_{m}
   \comma m \in \mathbb{N}_0\label{EQ07}
   ,
  \end{align}
  while for the normal component of the vorticity we have
  \begin{align}
   \Vert \omega_3\Vert_{L^\infty} \lec \Vert u\Vert_{1,\infty}
   \text{ and }
   \Vert \omega_3\Vert_{m} \lec \Vert u\Vert_{m+1}  
   \comma m \in \mathbb{N}_0,
   \label{EQ08}
  \end{align}
  since $\omega_3 = \partial_1 u_2 - \partial_2 u_1$. 
  Therefore, using \eqref{EQ62}, \eqref{EQ07}, and \eqref{EQ08} in \eqref{EQ06},
  we conclude
  \begin{align}
   \begin{split}
    &\Vert \eta(t)\Vert_{L^2}^2
    +\nu \int_0^t
    \Vert \nabla \eta\Vert_{L^2}^2 \,ds
    \\&
    \lec
    \Vert \eta_0\Vert_{L^2}^2
    +\int_0^t 
    (
    \Vert \eta\Vert_{L^2}^2
    (\Vert \eta\Vert_{L^\infty}\Vert u\Vert_{1,\infty}) 
    +\Vert \eta\Vert_{L^2} 
    (\Vert u\Vert_{1}\Vert \eta\Vert_{L^\infty}
    + \Vert u\Vert_{1}\Vert u\Vert_{1,\infty}
    +\Vert \nabla p\Vert_{L^2})
    \,ds
    .
    \llabel{EQ09}
   \end{split}
  \end{align}
  
  It remains to establish similar inequalities for $Z^\alpha \eta$,
  where~$|\alpha|=1,2$. We only present our estimates for $|\alpha|=2$, assuming
  \eqref{EQ.Nor} for~$m=1$.We first
  consider the case with only tangential derivatives.\\
  
  \noindent\texttt{Step~1.} The case $Z^\alpha = Z^{\tilde{\alpha}}_\hh$. \\
  
  Applying $Z^{\tilde{\alpha}}_\hh$ to \eqref{EQ.eta}$_1$, we obtain
  \begin{align}
   Z^{\tilde{\alpha}}_\hh \eta_t
   -\nu \Delta Z^{\tilde{\alpha}}_\hh \eta
   +u\cdot \nabla Z^{\tilde{\alpha}}_\hh \eta
   &=
   Z^{\tilde{\alpha}}_\hh (\omega\cdot \nabla u_\hh)
   +2\mu Z^{\tilde{\alpha}}_\hh \nabla_\hh^\perp p
   +u\cdot \nabla Z^{\tilde{\alpha}}_\hh \eta
   -Z^{\tilde{\alpha}}_\hh (u \cdot \nabla \eta)
   .
   \label{EQ63}
  \end{align}
  Upon testing with $Z^{\tilde{\alpha}}_\hh \eta$,
  the left-hand side of \eqref{EQ63} gives
  \begin{align}
   \frac{1}{2}\Vert Z^{\tilde{\alpha}}_\hh \eta\Vert_{L^2}^2
   +\nu\Vert \nabla Z^{\tilde{\alpha}}_\hh \eta\Vert_{L^2}^2
   \label{EQ64}
  \end{align}
since there is no boundary term thanks to~\eqref{EQ.eta}$_2$.
  For the right-hand side of \eqref{EQ63}, 
  the pressure term yields
  \begin{align}
   (2\mu Z^{\tilde{\alpha}}_\hh \nabla_\hh^\perp p, Z^{\tilde{\alpha}}_\hh \eta)
   \lec
   \Vert \nabla p\Vert_{2}\Vert \eta\Vert_{2}
   ,
   \label{EQ65}
  \end{align}
  since~$|\alpha|=2$.
  For the term involving vorticity, we use
  \eqref{EQ.int} and write
  \begin{align}
   \begin{split}
    (Z^{\tilde{\alpha}}_\hh (\omega\cdot \nabla u_\hh),Z^{\tilde{\alpha}}_\hh \eta)
    \lec&
    \Vert Z^{\tilde{\alpha}}_\hh (\omega\cdot \nabla u_\hh)\Vert_{L^2}\Vert \eta\Vert_{2}
    \lec
    (\Vert \omega\Vert_{L^\infty}\Vert  \nabla u_\hh\Vert_{2}
    +\Vert \omega\Vert_{2}\Vert \nabla u_\hh\Vert_{L^\infty})\Vert \eta\Vert_{2}
    \\
    \lec&
    \Vert \eta\Vert_{2}^2
    (\Vert \eta\Vert_{L^\infty}+\Vert u\Vert_{1,\infty})
    +\Vert \eta\Vert_{2}\Vert u\Vert_{3}
    (\Vert \eta\Vert_{L^\infty}+\Vert u\Vert_{1,\infty}) 
    ,
    \label{EQ66}
   \end{split}
  \end{align}
  where, in the last inequality we have used \eqref{EQ07} and~\eqref{EQ08}.
  Finally, for the commutator term in \eqref{EQ63},
  we as in~\eqref{EQ45} first separate the normal and tangential derivatives. 
  Then, we divide and multiply 
  by $\varphi$ the terms involving $\partial_z$ obtaining
  \begin{align}
   I_\text{com} = u\cdot \nabla Z^{\tilde{\alpha}}_\hh \eta
   -Z^{\tilde{\alpha}}_\hh (u \cdot \nabla \eta)
   =
   \sum_{1\le |\tilde{\beta}| \le 2}
   {\tilde{\alpha} \choose \tilde{\beta}} 
   \left(Z^{\tilde{\beta}}_\hh u_\hh 
   \cdot 
   \nabla_\hh Z^{\tilde{\alpha}-\tilde{\beta}}_\hh \eta
   + 
   Z^{\tilde{\beta}}_\hh \frac{u_3}{\varphi}  
   Z_3 Z^{\tilde{\alpha}-\tilde{\beta}}_\hh \eta\right)
   .
   \label{EQ68}
  \end{align}
  Once again, we use \eqref{EQ.int} and estimate the sum in \eqref{EQ68}
  \begin{align}
   \begin{split}
    (I_\text{com}, Z^{\tilde{\alpha}}_\hh \eta)
    &\lec
    \left(\Vert Z_\hh u\Vert_{L^\infty}+\left\Vert Z_\hh \frac{u_3}{\varphi}\right\Vert_{L^\infty}\right)
    \Vert \eta\Vert_{2}^2
    +
    \left(\Vert Z_\hh u\Vert_{2}+\left\Vert Z_\hh \frac{u_3}{\varphi}\right\Vert_{2}\right)
    \Vert \eta\Vert_{L^\infty}\Vert \eta\Vert_{2}
    \\&\lec
    \Vert \eta\Vert_{2}^2\Vert u\Vert_{2,\infty}
    +
    \Vert \eta\Vert_{2}\Vert u\Vert_{4}\Vert \eta\Vert_{L^\infty}
    ,
    \label{EQ69}     
   \end{split}
  \end{align}
  where we have also used \eqref{EQ.u3}.
  Now, collecting \eqref{EQ64}--\eqref{EQ69}
  and integrating in time,
  it follows that 
  \begin{align}
   \begin{split}
    &\Vert Z^{\tilde{\alpha}}_\hh \eta(t)\Vert_{L^2}^2
    +\nu \int_0^t \Vert \nabla Z^{\tilde{\alpha}}_\hh \eta\Vert_{L^2}^2 \,ds
    \\&
    \lec 
    \int_0^t (
    \Vert \eta\Vert_{2}^2(\Vert \eta\Vert_{L^\infty}+\Vert u\Vert_{2,\infty})
    +\Vert \eta\Vert_{2}(\Vert u\Vert_{4}\Vert \eta\Vert_{L^\infty}
    +\Vert u\Vert_{3}\Vert u\Vert_{1,\infty}+\Vert \nabla p\Vert_{2})
    ) \,ds,
    \label{EQ72}
   \end{split}
  \end{align}
  concluding Step~1.\\
  
  \noindent\texttt{Step~2.} We now consider $Z^\alpha = Z^{\tilde{\alpha}}_\hh Z^k_3$,  
  for $1 \le k \le |\alpha| \le 2 $.\\
  
  Applying $Z^\alpha$ to the \eqref{EQ.eta}$_1$, we obtain
  \begin{align}
   \begin{split}
    Z^\alpha \eta_t
    -\nu \Delta Z^\alpha \eta
    +u\cdot \nabla Z^\alpha \eta
    =&
    Z^\alpha (\omega\cdot \nabla u_\hh)
    +2\mu Z^\alpha \nabla_\hh^\perp p
    +u\cdot \nabla Z^\alpha \eta
    -Z^\alpha (u \cdot \nabla \eta)
    \\&
    -\nu Z^\alpha \Delta \eta
    +\nu \Delta Z^\alpha \eta
    .
    \label{EQ73}
   \end{split}
  \end{align}
  Upon testing with $Z^\alpha \eta$, the left-hand side of the above equation gives
  \begin{align}
   \frac{1}{2}\frac{d}{dt}\Vert Z^\alpha \eta\Vert_{L^2}^2
   +\nu\Vert \nabla Z^\alpha \eta\Vert_{L^2}^2
   ,\label{EQ74}
  \end{align}
  since there is no boundary term due to the presence of~$Z_3$. Now, for the
  vorticity and the pressure terms, we repeat \eqref{EQ65} and \eqref{EQ66}, obtaining
  \begin{align}
   \begin{split}
    (Z^\alpha (\omega\cdot \nabla u_\hh)
    +2\mu Z^\alpha \nabla_\hh^\perp p,
    Z^\alpha \eta)
    \lec&
    \Vert \eta\Vert_{2}^2
    (\Vert \eta\Vert_{L^\infty}+\Vert u\Vert_{1,\infty})
    \\
    &+\Vert \eta\Vert_{2}
    (\Vert u\Vert_{3}\Vert \eta\Vert_{L^\infty}
    +\Vert u\Vert_{3}\Vert u\Vert_{1,\infty}
    +\Vert \nabla p\Vert_{2}) 
    .
    \label{EQ75}
   \end{split}
  \end{align}
  The first commutator term in \eqref{EQ73} is expanded as in 
  \eqref{EQ13} and \eqref{EQ57} by writing
  \begin{align}
   \begin{split}
    u \cdot \nabla Z^\alpha \eta - Z^\alpha(u \cdot \nabla \eta)
    =&
    u_3 \partial_z Z^\alpha \eta - u_3 Z^\alpha \partial_z \eta    
    -
    \sum_{1\le |\beta|\le |\alpha|}
    {\alpha \choose \beta}
    Z^\beta u \cdot Z^{\alpha-\beta} \nabla \eta
    \\=&
    -
    \sum_{j=0}^{k-1}
    \tilde{c}^k_{j,\varphi} u_3 \partial_z Z^{\tilde{\alpha}}_\hh Z^j_3 \eta  
    -
    \sum_{1\le |\beta|\le |\alpha|}
    {\alpha \choose \beta}
    Z^\beta u \cdot Z^{\alpha-\beta} \nabla \eta
    \\=&
    -
    \sum_{j=0}^{k-1}
    \tilde{c}^k_{j,\varphi} u_3 \partial_z Z^{\tilde{\alpha}}_\hh Z^j_3 \eta  
    -
    \sum_{1\le |\beta| \le |\alpha|}
    {\alpha \choose \beta} 
    Z^{\beta} u_\hh 
    \cdot 
    \nabla_\hh Z^{\alpha-\beta} \eta
    \\&-
    \sum_{1\le |\beta| \le |\alpha|}
    {\alpha \choose \beta}  
    Z^{\beta} u_3 
    Z^{\alpha-\beta} \partial_z \eta
    \\=& I_1 + I_2 + I_3
    .
    \label{EQ76}        
   \end{split}
  \end{align}
  We divide and multiply $I_1$ by $\varphi$ and estimate it as
  \begin{align}
   (I_1, Z^\alpha \eta) \lec
   \left\Vert \frac{u_3}{\varphi}\right\Vert_{L^\infty}
   \Vert \eta\Vert_{2}^2
   \lec \Vert \eta\Vert_{2}^2\Vert u\Vert_{1,\infty}.
   \label{EQ77}
  \end{align} 
  Next, we bound $I_2$ as
  \begin{align}
   (I_2, Z^\alpha \eta) \lec \Vert \eta\Vert_{2}^2\Vert u\Vert_{2,\infty}
   .\label{EQ78}
  \end{align}
  Lastly, for $I_3$, we first divide and multiply by $\varphi$
  and introduce lower order terms. It follows that
  \begin{align}
   Z^{\beta} u_3 
   Z^{\alpha-\beta} \partial_z \eta
   =
   Z^{\beta} \frac{u_3}{\varphi} 
   Z^{\alpha-\beta} Z_3 \eta
   + I_l,
   \label{EQ79}
  \end{align}
  where $I_l$ stands for the lower order terms.
  We now proceed as in \eqref{EQ69},
  obtaining through \eqref{EQ.int}
  \begin{align}
   (I_3, Z^\alpha \eta)\lec 
   \Vert \eta\Vert_{2}^2\Vert u\Vert_{2,\infty}
   +
   \Vert \eta\Vert_{2}\Vert u\Vert_{4}\Vert \eta\Vert_{L^\infty}
   .\label{EQ80}
  \end{align}
  Collecting \eqref{EQ76}--\eqref{EQ80}, we have the bound
  \begin{align}
   (u \cdot \nabla Z^\alpha \eta - Z^\alpha(u \cdot \nabla \eta)
   , Z^\alpha \eta) \lec 
   \Vert \eta\Vert_{2}^2\Vert u\Vert_{2,\infty}
   +\Vert \eta\Vert_{2}\Vert u\Vert_{4}\Vert \eta\Vert_{L^\infty}.
   \label{EQ81}
  \end{align}
  Now, it only remains to estimate the commutator term involving the Laplacian. 
  We use Lemma~\ref{L01} to write
  \begin{align}
   \begin{split}
    \nu Z^\alpha \Delta \eta
    - \nu \Delta Z^\alpha \eta
    =&
    \nu
    \sum_{j=0}^{k-1}
    \sum_{l=0}^{j}
    \left(
    c^j_{l,\varphi} c^k_{j,\varphi} \partial_{zz} Z^l_3 Z^{\tilde{\alpha}}_\hh \eta 
    +
    (c^j_{l,\varphi})' c^k_{j,\varphi} \partial_{z} Z^l_3 Z^{\tilde{\alpha}}_\hh \eta
    \right)
    \\
    &+
    \nu
    \sum_{l=0}^{k-1}
    \left(
    c^k_{l,\varphi}\partial_{zz} Z^l_3 Z^{\tilde{\alpha}}_\hh \eta  
    +
    (c^k_{l,\varphi})'\partial_{z} Z^l_3 Z^{\tilde{\alpha}}_\hh  \eta
    \right)
    \label{EQ82}
    ,
   \end{split}
  \end{align}
  with the highest-order term
  \begin{align}
   2\nu c^k_{k-1,\varphi}
   \partial_{zz} Z^{k-1}_3 Z^{\tilde{\alpha}}_\hh \eta
   .
   \llabel{EQ83}
  \end{align}
  When we integrate this term against $Z^\alpha \eta$
  and repeat the steps in \eqref{EQ53}, we obtain
  \begin{align}
   (2\nu c^k_{k-1,\varphi}
   \partial_{zz} Z^{k-1}_3 Z^{\tilde{\alpha}}_\hh \eta
   ,Z^\alpha \eta)
   \le
   C_\epsilon\nu\Vert \partial_{z} Z^{k-1}_3 Z^{\tilde{\alpha}}_\hh \eta\Vert_{L^2}^2
   +
   \epsilon\nu
   (\Vert \partial_z Z^\alpha \eta\Vert_{L^2}^2
   +
   \Vert \eta\Vert_{2}^2)
   ,
   \label{EQ84}
  \end{align}
  with $\epsilon >0$ arbitrarily small.
  Dealing with the remaining terms in \eqref{EQ82}, we get
  \begin{align}
   (\nu Z^\alpha \Delta \eta
   - \nu \Delta Z^\alpha \eta, Z^\alpha \eta)
   \le
   \epsilon \nu \Vert \partial_z Z^\alpha \eta\Vert_{L^2}^2
   +C_\epsilon (\Vert \partial_z \eta\Vert_{1}^2 + \Vert \eta\Vert_{2}^2)
   .\label{EQ11}
  \end{align}
  Now combining \eqref{EQ74}, \eqref{EQ75}, \eqref{EQ81},
  \eqref{EQ11} and integrating in time, we obtain
  \begin{align}
   \begin{split}
    &
    \Vert Z^\alpha \eta(t)\Vert_{L^2}^2
    +c_0\nu \int_0^t \Vert \nabla \eta\Vert_{L^2}^2\,ds
    \lec \Vert Z^\alpha \eta_0\Vert_{L^2}^2
    +\nu \int_0^t \Vert \partial_z \eta\Vert_{1}^2 \,ds
    \\&+
    \int_0^t (
    \Vert \eta\Vert_{2}^2
    (\Vert \eta\Vert_{L^\infty}+\Vert u\Vert_{2,\infty})
    +\Vert \eta\Vert_{2}
    (\Vert u\Vert_{4}\Vert \eta\Vert_{L^\infty}
    +\Vert u\Vert_{3}\Vert u\Vert_{1,\infty}
    +\Vert \nabla p\Vert_{2})\,ds
    .\label{EQ12}
   \end{split}
  \end{align}
  Consequently, using the inductive hypothesis, i.e., \eqref{EQ.Nor} for $m=1$
  to control the normal derivative of $\eta$,
  we obtain~\eqref{EQ.Nor}.
 \end{proof}
 
 \startnewsection{Pressure Estimates}{sec.p}
 
 In this section, we estimate the pressure term.
 We note that $\Vert \nabla p\Vert_{3}$
 is needed for the conormal and normal estimates, while 
 $\Vert D^2 p\Vert_{3}$ is required to bound
 $\Vert u\Vert_{2,\infty}$.
 
 \cole
 \begin{Proposition}
  \label{P.Pre}
  Let $(u,p)$ be a smooth solution of \eqref{NSE0}--\eqref{hnavierbdry}.
  Then we have the inequality
  \begin{align}
   \Vert D^2 p\Vert_{3}+\Vert \nabla p\Vert_{3}
   \lec 
   \Vert u\Vert_{4}(\Vert u\Vert_{2,\infty}+\Vert \eta\Vert_{L^\infty}+1)+\nu \Vert \nabla u\Vert_{4}
   ,
   \label{EQ.Pre}
  \end{align}
where the implicit constant is independent of~$\nu$. 
 \end{Proposition}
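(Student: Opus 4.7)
The first step is to derive the Poisson problem for $p$. Taking the divergence of \eqref{NSE0}$_1$ and using incompressibility gives
\[
\Delta p = -\partial_i u_j\,\partial_j u_i = -\partial_i\partial_j(u_iu_j) \quad\text{in } \Omega.
\]
Reading off the third component of \eqref{NSE0}$_1$ at $\{z=0\}$, the relations $u_3|_{z=0}=0$ (so that $\partial_t u_3|_{z=0}=\nabla_h u_3|_{z=0}=0$) and $\Delta_h u_3|_{z=0}=0$ yield $\partial_z p|_{z=0}=\nu\,\partial_{zz}u_3|_{z=0}$. Applying $\partial_z$ to the incompressibility relation $\partial_z u_3=-\nabla_h\cdot u_h$ and inserting the Navier condition \eqref{hnavierbdry} on $\partial_z u_h$ then gives the Neumann condition
\[
\partial_z p|_{z=0} = -2\mu\nu\,(\nabla_h\cdot u_h)|_{z=0}.
\]

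Next, I would decompose $p=p_E+p_S$, where $p_E$ solves the homogeneous Neumann problem $\Delta p_E=-\partial_i\partial_j(u_iu_j)$ with $\partial_z p_E|_{z=0}=0$, while $p_S$ is harmonic in $\Omega$ with Neumann data $-2\mu\nu(\nabla_h\cdot u_h)|_{z=0}$. Since $p_S$ is harmonic with purely boundary data of size $O(\nu)$, a horizontal Fourier representation (equivalently the half-space Poisson kernel) gives exponential decay in $z$, and a standard trace estimate in Sobolev conormal spaces yields
\[
\|\nabla p_S\|_3 + \|D^2 p_S\|_3 \lec \nu\,\|\nabla u\|_4,
\]
contributing the $\nu\|\nabla u\|_4$ term in \eqref{EQ.Pre}.

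For $p_E$, I would proceed by induction on $0\le|\alpha|\le 3$, commuting $Z^\alpha$ with $\Delta$ via Lemma~\ref{L01} and applying the $L^2$ Neumann elliptic estimate in the half-space to $Z^\alpha p_E$. The crucial point is to keep the source in its double-divergence form $-\partial_i\partial_j(u_iu_j)$ and integrate by parts twice against the test function, transferring both derivatives onto $p_E$; each summand then reduces to $u_iu_j$ paired with two derivatives of $p_E$, and after distributing $Z^\alpha$ via Leibniz and applying \eqref{EQ.int}, at most one derivative lands on each factor of $u$. Residual $\partial_z u_h$ factors are rewritten through \eqref{eta} in terms of $\eta$, $u_h$, and tangential derivatives of $u_3$, so that every normal derivative is controlled either by $\|\eta\|_{L^\infty}$ or by $\|u\|_{2,\infty}$ and $\|u\|_4$. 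This yields the bound $\|u\|_4(\|u\|_{2,\infty}+\|\eta\|_{L^\infty}+1)$ without introducing $\|\nabla u\|_{H^m_\cco}$ for any $m$.

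The main obstacle is precisely to avoid $\|\nabla u\|_{H^m_\cco}$ (without the $\nu$ factor) in the estimate for $p_E$, since a direct Leibniz expansion of $\partial_i u_j\,\partial_j u_i$ inside the $H^3_\cco$ norm would immediately produce such a term. The whole plan therefore hinges on exploiting the double-divergence structure and the $\eta$-representation of $\partial_z u_h$ throughout. Carefully tracking the commutators arising from Lemma~\ref{L01}(iii)--(iv) and the boundary terms generated by the integrations by parts, and verifying that the latter either vanish by $\partial_z p_E|_{z=0}=0$ or are absorbed into the $\nu\|\nabla u\|_4$ contribution from $p_S$, is the delicate bookkeeping step of the argument.
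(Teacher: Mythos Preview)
Your central premise---that a direct Leibniz expansion of $\partial_i u_j\,\partial_j u_i$ in $H^3_\cco$ would ``immediately produce'' $\|\nabla u\|_{H^m_\cco}$---is incorrect, and this misreading drives you toward a scheme that does not close. The paper estimates $\|Z^\alpha(\partial_i u_j\,\partial_j u_i)\|_{L^2}$ \emph{directly} and obtains only $\|u\|_4(\|u\|_{2,\infty}+\|\eta\|_{L^\infty})$. The mechanism you are missing is the $\varphi$-trick: using incompressibility, the only genuinely normal contribution is the cross-term $Z_\hh u_3\cdot \partial_z u_\hh$. When conormal derivatives land on $\partial_z u_\hh$, one writes
\[
Z^{\tilde\beta}_\hh Z_\hh u_3 \cdot Z^{\tilde\alpha-\tilde\beta}_\hh \partial_z u_\hh
=\Bigl(\frac{Z^{\tilde\beta}_\hh Z_\hh u_3}{\varphi}\Bigr)\cdot Z_3 Z^{\tilde\alpha-\tilde\beta}_\hh u_\hh,
\]
so the normal derivative becomes the conormal $Z_3$, and the prefactor is controlled via the Hardy-type bound \eqref{EQ.u3} by $\|u\|_{2,\infty}$ (or interpolated with $\|u\|_4$ as in \eqref{EQ91b}). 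Only the single case $|\tilde\beta|=|\tilde\alpha|$ keeps $\partial_z u_\hh$ bare, and there it is paired with $\|\partial_z u_\hh\|_{L^\infty}\lec\|\eta\|_{L^\infty}+\|u\|_{1,\infty}$. No $\|\nabla u\|_{H^m_\cco}$ ever appears.

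Your proposed fix---keep the source as $-\partial_i\partial_j(u_iu_j)$ and integrate by parts twice against the test function---does not yield the $\|D^2 p\|_3$ part of the estimate. The $H^2$ Neumann bound is $\|D^2 q\|_{L^2}+\|\nabla q\|_{L^2}\lec\|\Delta q\|_{L^2}+\|\partial_n q\|_{H^{1/2}(\partial\Omega)}$, which requires the source in $L^2$; there is no test function to absorb the two divergences. If instead you pair with $Z^\alpha p_E$ in a variational step, you obtain at best $\|\nabla Z^\alpha p_E\|_{L^2}^2\lec \|Z^\alpha(u_iu_j)\|_{L^2}\,\|D^2 Z^\alpha p_E\|_{L^2}$, which is circular. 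The decomposition $p=p_E+p_S$ is harmless but unnecessary: the paper applies the Neumann estimate to the full $Z^\alpha p$, and the boundary data $-2\mu\nu Z^{\tilde\alpha}_\hh\nabla_\hh\cdot u_\hh$ in $H^1$ already gives the $\nu\|\nabla u\|_4$ term. The induction on the number of $Z_3$ factors then proceeds by rewriting $\partial_{zz}p$ via the equation as $-\Delta_\hh p-\partial_iu_j\partial_ju_i$, reducing to the tangential case.
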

 \colb
 
 Observe that upon applying the divergence to \eqref{NSE0}, we  obtain
 \begin{align}
  -\Delta p = \partial_i u_j \partial_j u_i
  ,\label{pre} 
 \end{align}
 while on the boundary, using \eqref{hnavierbdry}, we have
 \begin{align}
  \nabla p \cdot n = -\partial_z p
  = -\nu \partial_{ii} u \cdot n
  = \nu \partial_{ii} u_3 
  = \nu \partial_{zz} u_3
  = -\nu \partial_z \nabla_\hh \cdot u_\hh
  = -2\mu \nu \nabla_\hh \cdot u_\hh.
  \label{pre.bdry}
 \end{align}
 
 We start by treating $ Z^{\tilde{\alpha}}_\hh p$ and then
 we use the induction on $k$ to cover the general case,
 recalling that $Z^\alpha = Z^{\tilde{\alpha}}_\hh Z^k_3$.
 
 \cole
 \begin{Lemma}
  \label{L.Pre}
  Let $(u,p)$ be a smooth solution of \eqref{NSE0}--\eqref{hnavierbdry}.
  Then we have the inequality
  \begin{align}
   \Vert D^2 Z^{\tilde{\alpha}}_\hh p\Vert_{L^2}+
   \Vert \nabla Z^{\tilde{\alpha}}_\hh p\Vert_{L^2}
   \lec 
   \Vert u\Vert_{4}(\Vert u\Vert_{2,\infty}+\Vert \eta\Vert_{L^\infty}+1)
   + \nu\Vert \nabla u\Vert_{|\tilde{\alpha}|+1},
   \label{EQ.hPre}
  \end{align}
  where $0 \le |\tilde{\alpha}|\le 3$.
 \end{Lemma}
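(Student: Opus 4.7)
The plan rests on the elliptic Neumann problem $-\Delta p = \partial_i u_j \partial_j u_i = \nabla\cdot(u\cdot\nabla u)$ in $\Omega$, together with $\partial_z p = -2\mu\nu\nabla_\hh\cdot u_\hh$ on $\partial\Omega$. Since $Z_\hh=\nabla_\hh$ commutes with $\Delta$, with every Cartesian partial derivative, and with $\nabla\cdot$, applying $Z_\hh^{\tilde\alpha}$ yields an analogous Neumann problem for $Z_\hh^{\tilde\alpha} p$ with source $\nabla\cdot Z_\hh^{\tilde\alpha}(u\cdot\nabla u)$ and Navier trace $-2\mu\nu Z_\hh^{\tilde\alpha}(\nabla_\hh\cdot u_\hh)$, which is the starting point for both estimates.

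For the gradient estimate I would test the equation against $Z_\hh^{\tilde\alpha} p$ and integrate by parts, exploiting the divergence form of the source to shift one derivative onto the pressure:
\begin{equation*}
 \Vert\nabla Z_\hh^{\tilde\alpha} p\Vert_{L^2}^2 = -(Z_\hh^{\tilde\alpha}(u\cdot\nabla u),\nabla Z_\hh^{\tilde\alpha} p) -2\mu\nu\int_{\partial\Omega} Z_\hh^{\tilde\alpha}(\nabla_\hh\cdot u_\hh)Z_\hh^{\tilde\alpha} p\,d\sigma.
\end{equation*}
The interior IBP boundary term vanishes because $(u\cdot\nabla u)\cdot n = -u_i\partial_i u_3 = 0$ on $\partial\Omega$: both $u_3$ and its tangential derivatives vanish there, and $u_3\partial_z u_3 = 0$. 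Applying Cauchy--Schwarz, Young's inequality, and the trace theorem then gives $\Vert\nabla Z_\hh^{\tilde\alpha} p\Vert_{L^2}\lec \Vert Z_\hh^{\tilde\alpha}(u\cdot\nabla u)\Vert_{L^2}+\nu\Vert\nabla u\Vert_{|\tilde\alpha|+1}$, with the second term coming from the Navier piece. The convective bound is then handled by writing $u\cdot\nabla u = u_\hh\cdot\nabla_\hh u + (u_3/\varphi)Z_3 u$, applying \eqref{EQ.int} with $k=|\tilde\alpha|\le 3$, controlling $u_3/\varphi$ through \eqref{EQ.u3}, and using $Z_3 u = \varphi\partial_z u$ together with \eqref{EQ62} to bound $\Vert Z_3 u\Vert_{L^\infty}\lec\Vert\eta\Vert_{L^\infty}+\Vert u\Vert_{1,\infty}$. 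The key point is that the only normal derivative of $u$ entering the source sits in an $L^\infty$ slot, so no conormal norm of $\nabla u$ enters the estimate.

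For the $D^2$ estimate I would invoke standard elliptic regularity on the half-space,
\begin{equation*}
 \Vert D^2 Z_\hh^{\tilde\alpha} p\Vert_{L^2}\lec \Vert\Delta Z_\hh^{\tilde\alpha} p\Vert_{L^2}+\Vert\partial_z Z_\hh^{\tilde\alpha} p\Vert_{H^{1/2}(\partial\Omega)},
\end{equation*}
so that the trace of $\nu Z_\hh^{\tilde\alpha}\nabla_\hh u_\hh$ provides the announced $\nu\Vert\nabla u\Vert_{|\tilde\alpha|+1}$. To estimate the interior source $\Vert Z_\hh^{\tilde\alpha}(\partial_i u_j\partial_j u_i)\Vert_{L^2}$ I would split into purely tangential pieces (immediately handled by \eqref{EQ.int}) and the mixed piece $\partial_z u_\hh\cdot\nabla_\hh u_3$, for which the identity $\partial_z u_\hh = -\eta^\perp + 2\mu u_\hh + \nabla_\hh u_3$, obtained by inverting the vorticity relation $\omega_\hh = \eta + 2\mu u_\hh^\perp$, confines the appearance of $\eta$ to a single factor that I would place in $L^\infty$.

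The hard part will be the top-order Leibniz piece $Z_\hh^3\eta\cdot\nabla_\hh u_3$ that arises in this mixed term when $|\tilde\alpha|=3$: a naive product bound yields $\Vert\eta\Vert_3\Vert u\Vert_{1,\infty}$, which is outside the form allowed by~\eqref{EQ.hPre}. To close this, I would again exploit the identity $\partial_i u_j\partial_j u_i = \nabla\cdot(u\cdot\nabla u)$ and route this specific contribution through an integration by parts that transfers the outermost $Z_\hh$ onto $p$ (the interior IBP boundary term vanishes as in the gradient estimate), so that the leftover top-order factor of $\nabla u$ is weighted by $\nu$ and absorbed into the $\nu\Vert\nabla u\Vert_{|\tilde\alpha|+1}$ term on the right-hand side of~\eqref{EQ.hPre}, leaving only $\Vert u\Vert_4(\Vert u\Vert_{2,\infty}+\Vert\eta\Vert_{L^\infty}+1)$ from the remaining Leibniz pieces.
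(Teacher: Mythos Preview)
Your treatment of $\Vert\nabla Z_\hh^{\tilde\alpha} p\Vert_{L^2}$ is essentially fine, but the $D^2$ argument has a genuine gap. Once you substitute $\partial_z u_\hh = -\eta^\perp + 2\mu u_\hh + \nabla_\hh u_3$ into the source and expand $Z_\hh^{\tilde\alpha}(\eta^\perp\cdot\nabla_\hh u_3)$ by Leibniz, every term in which $\eta$ carries at least one $Z_\hh$ falls outside the right-hand side of \eqref{EQ.hPre}, which contains no $\Vert\eta\Vert_m$ with $m\ge 1$; the top-order piece you flag is only the most visible instance. Your proposed fix cannot work as stated: the $D^2$ bound comes from elliptic regularity and requires the bare norm $\Vert Z_\hh^{\tilde\alpha}(\partial_i u_j\partial_j u_i)\Vert_{L^2}$, with no inner product against $p$ in which to integrate by parts. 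Nor does any factor of $\nu$ sit in the interior source, so nothing there can be absorbed into $\nu\Vert\nabla u\Vert_{|\tilde\alpha|+1}$; the $\nu$ originates solely from the Navier boundary datum.

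The paper never introduces $\eta$ into this source estimate. In each Leibniz piece $Z_\hh^{\tilde\beta} Z_\hh u_3\cdot Z_\hh^{\tilde\alpha-\tilde\beta}\partial_z u_\hh$ with $|\tilde\beta|\le |\tilde\alpha|-1$, one multiplies and divides by $\varphi$ to obtain $Z_\hh^{\tilde\beta} Z_\hh(u_3/\varphi)\cdot Z_3 Z_\hh^{\tilde\alpha-\tilde\beta} u_\hh$, a product of purely \emph{conormal} derivatives of $u$; then \eqref{EQ.u3} together with \eqref{EQ.int} gives $\Vert u\Vert_4\Vert u\Vert_{2,\infty}$. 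Only when $|\tilde\beta|=|\tilde\alpha|$ does a bare $\partial_z u_\hh$ remain, and there it sits in $L^\infty$, bounded by $\Vert\eta\Vert_{L^\infty}+\Vert u\Vert_{1,\infty}$ via \eqref{EQ62}. This is exactly the $\varphi$-device you already used on $u\cdot\nabla u$ in your gradient estimate; applying it to $\partial_i u_j\partial_j u_i$ removes the ``hard part'' entirely.
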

 \colb
 
 \begin{proof}[Proof of Lemma~\ref{L.Pre}]
  
  We first present our estimates for $|\tilde{\alpha}|=3$.
  We start by applying $Z^{\tilde{\alpha}}_\hh $ to \eqref{pre} and~\eqref{pre.bdry}. 
  It follows that $Z^{\tilde{\alpha}}_\hh p$ solves
  \begin{align}
   -\Delta Z^{\tilde{\alpha}}_\hh p = Z^{\tilde{\alpha}}_\hh (\partial_i u_j \partial_j u_i)
   ,
   \llabel{hpre}
  \end{align}
  with the boundary condition
  \begin{align}
   \nabla Z^{\tilde{\alpha}}_\hh p \cdot n
   = -2\mu \nu Z^{\tilde{\alpha}}_\hh \nabla_\hh \cdot u_\hh.
   \llabel{hpre.bdry}
  \end{align}
  Applying the $H^2$ elliptic estimate for the Neumann problem (see~\cite{G}), we obtain
  \begin{align}
   \Vert D^2 Z^{\tilde{\alpha}}_\hh p\Vert_{L^2}
   +
   \Vert \nabla Z^{\tilde{\alpha}}_\hh p\Vert_{L^2}
   \lec
   \Vert Z^{\tilde{\alpha}}_\hh (\partial_i u_j \partial_j u_i)\Vert_{L^2}
   +
   2|\mu| \nu \Vert Z^{\tilde{\alpha}}_\hh \nabla_\hh \cdot u_\hh\Vert_{H^1}
   .\label{EQ87}
  \end{align}
  Note that
  to estimate $Z^{\tilde{\alpha}}_\hh (\partial_i u_j \partial_j u_i)$
  it suffices to consider 
  $Z^{\tilde{\alpha}}_\hh (Z_\hh u_\hh Z_\hh u_\hh )$ 
  and 
  $Z^{\tilde{\alpha}}_\hh (Z_\hh u_3 \partial_z u_\hh)$  
  since we may use the divergence-free condition to eliminate 
  the normal derivatives in $\partial_z u_3 \partial_z u_3$.
  For the first case, we use \eqref{EQ.int} and obtain
  \begin{align}
   \Vert Z^{\tilde{\alpha}}_\hh (Z_\hh u_\hh Z_\hh u_\hh )\Vert_{L^2}
   \lec
   \Vert u\Vert_{4}\Vert u\Vert_{1,\infty},
   \llabel{EQ89}
  \end{align}
  while for the second case, we first expand by writing
  \begin{align}
   Z^{\tilde{\alpha}}_\hh (Z_\hh u_3 \partial_z u_\hh)=
   \sum_{0\le |\tilde{\beta}|\le |\tilde{\alpha}|}
   {\alpha \choose \beta}
   Z^{\tilde{\beta}}_\hh Z_\hh u_3
   Z^{\tilde{\alpha}-\tilde{\beta}}_\hh \partial_z u_\hh
   .\label{EQ90}
  \end{align}
  When $|\tilde{\beta}|=3$, we obtain
  \begin{align}
   \Vert Z_\hh^{\tilde{\beta}} Z_\hh u_3 \partial_z Z_\hh^{\tilde{\alpha} - {\tilde{\beta}}} u_\hh\Vert_{L^2}\lec
   \Vert u\Vert_{4}(\Vert \eta\Vert_{L^\infty}+\Vert u\Vert_{1,\infty})
   .\label{EQ91a}
  \end{align} 
  For the values of $|\tilde{\beta}|\neq 3$,
  we divide and multiply by $\varphi$ and write
  \begin{equation}
   \label{EQ91b}
   \left\Vert Z_\hh^{\tilde{\beta}} Z_\hh \frac{u_3}{\varphi} 
   Z_3 Z_\hh^{\tilde{\alpha} - {\tilde{\beta}}} u_\hh\right\Vert_{L^2}
   \lec
   \begin{cases}
    \bigl\Vert Z_\hh \frac{u_3}{\varphi}\bigr\Vert_{L^\infty}\Vert Z^{\tilde{\alpha}-\tilde{\beta}}_\hh Z_3 u\Vert_{L^2}
    \lec \Vert u\Vert_{4}\Vert u\Vert_{2,\infty}, & |{\tilde{\beta}}|=0
    \\
    \left(\bigl\Vert Z_\hh \frac{u_3}{\varphi}\bigr\Vert_{L^\infty}\Vert Z_\hh Z_3 u\Vert_{2}
    +\left\Vert Z_\hh \frac{u_3}{\varphi}\right\Vert_{2} \Vert Z_\hh Z_3 u\Vert_{L^\infty}\right)
    \lec\Vert u\Vert_{4}\Vert u\Vert_{2,\infty}, & |{\tilde{\beta}}|=1
    \\
    \Vert u\Vert_{4}\Vert u\Vert_{2,\infty}, & |{\tilde{\beta}}|=2,
   \end{cases}
  \end{equation}
  where, for $|\tilde{\beta}|= 1$, we have used \eqref{EQ.int}
  by taking $f$ and $g$ as $Z_\hh \frac{u_3}{\varphi}$ and $Z_\hh Z_3 u$, respectively. 
  
  Now, it only remains to estimate the last term in \eqref{EQ87},
  where we have
  \begin{align}
   \nu \Vert Z^{\tilde{\alpha}}_\hh \nabla_\hh \cdot u_\hh\Vert_{H^1}
   \lec
   \nu\Vert u\Vert_{4}+ \nu\Vert \nabla u\Vert_{|\tilde{\alpha}|+1}
   .\label{EQ92}
  \end{align}
  As a consequence, combining \eqref{EQ87}--\eqref{EQ92}
  gives \eqref{EQ.hPre}, for~$|\tilde{\alpha}|=3$.
  When we consider a lower order conormal derivative, i.e., 
  $|\tilde{\alpha}|\le 2$, the only difference in our estimates is
  the treatment of the boundary term. Therefore, upon performing
  \eqref{EQ87}--\eqref{EQ92}, for $|\tilde{\alpha}|\le 2$, we may conclude~\eqref{EQ.hPre}.
 \end{proof}
 
 Now, we move on to the general case.
 
 \begin{proof}[Proof of Proposition~\ref{P.Pre}]
  
  We present our estimates for
  $\Vert D^2 Z^\alpha p\Vert_{L^2}$ and
   $\Vert \nabla Z^\alpha p\Vert_{L^2}$ with $|\alpha|=3$,
  using induction on $0\le k \le 3$ satisfying 
  $k + |\tilde{\alpha}| = 3$. The case $k=0$ follows by using Lemma~\ref{L.Pre}.
  Thus, we assume 
  \begin{align}
   \begin{split}
    \Vert D^2 Z^{\tilde{\beta}}_\hh Z^{k}_3 p\Vert_{L^2}+
    \Vert \nabla Z^{\tilde{\beta}}_\hh Z^{k}_3 p\Vert_{L^2}
    \lec& \Vert u\Vert_{4}(\Vert u\Vert_{2,\infty}+\Vert \eta\Vert_{L^\infty}+1) 
    \\&+
    \nu \Vert \nabla \nabla_\hh Z^{\tilde{\beta}}_\hh u\Vert_{L^2}
    \comma |\tilde{\beta}| + k = 3
    \label{EQ.pre.k-1} 
   \end{split}
  \end{align}
  and aim to prove that
  \begin{align}
   \begin{split}
    \Vert D^2 Z^{\tilde{\alpha}}_\hh Z^{k+1}_3 p\Vert_{L^2}+
    \Vert \nabla Z^{\tilde{\alpha}}_\hh Z^{k+1}_3 p\Vert_{L^2}
    \lec& \Vert u\Vert_{4}(\Vert u\Vert_{2,\infty}+\Vert \eta\Vert_{L^\infty}+1) 
    \\
    &+
    \nu \Vert \nabla \nabla_\hh Z^{\tilde{\alpha}}_\hh u\Vert_{L^2}
    \comma |\tilde{\alpha}| + k + 1 = 3.
    \label{EQ.pre.k}
   \end{split}
  \end{align}
  Now, applying $Z^\alpha = Z^{\tilde{\alpha}}_\hh Z^{k+1}_3$ to \eqref{pre}, it follows that
  \begin{align}
   -\Delta Z^\alpha p  
   = Z^\alpha(\partial_i u_j \partial_j u_i)
   + Z^\alpha \Delta p -\Delta Z^\alpha p
   .\llabel{kpre}  
  \end{align}
  To find the boundary condition, we write
  \begin{align}
   \nabla Z^\alpha p \cdot n
   = -\partial_z Z^{\tilde{\alpha}}_\hh Z^{k+1}_3 p
   = -Z^{\tilde{\alpha}}_\hh \sum_{j=0}^{k}
   \tilde{c}_{j,\varphi}^{k+1} Z_3^j \partial_z p 
   = -\tilde{c}_{0,\varphi}^{k+1} Z^{\tilde{\alpha}}_\hh \partial_z p 
   = -2\mu \nu \tilde{c}_{0,\varphi}^{k+1} Z^{\tilde{\alpha}}_\hh \nabla_\hh \cdot u_\hh    
   ,\llabel{kpre.bdry}
  \end{align}
  where we have used \eqref{pre.bdry} and the fact that $Z_3$ vanishes on the boundary.
  Applying the elliptic estimates, as in \eqref{EQ92}, we obtain
  \begin{align}
   \Vert D^2 Z^\alpha p\Vert_{L^2}+
   \Vert \nabla Z^\alpha p\Vert_{L^2}
   \lec
   \Vert Z^\alpha(\partial_i u_j \partial_j u_i)\Vert_{L^2}
   +
   \Vert Z^\alpha \Delta p -\Delta Z^\alpha p\Vert_{L^2}
   +
   |\mu| \nu \Vert Z^{\tilde{\alpha}}_\hh \nabla_\hh \cdot u_\hh\Vert_{H^1}
   .\label{EQ93}
  \end{align}
  For the first term on the right-hand side of the above inequality,
  we introduce lower order terms when necessary and use the estimates
  in \eqref{EQ91a} and \eqref{EQ91b} to conclude that
  \begin{align}
   \Vert Z^\alpha(\partial_i u_j \partial_j u_i)\Vert_{L^2}
   +|\mu| \nu \Vert Z^{\tilde{\alpha}}_\hh \nabla_\hh \cdot u_\hh\Vert_{H^1}
   \lec 
   \Vert u\Vert_{4}(\Vert u\Vert_{2,\infty}+\Vert \eta\Vert_{L^\infty}+1)
   +\nu \Vert \nabla \nabla_\hh Z^{\tilde{\alpha}}_\hh u\Vert_{L^2}
   .
   \label{EQ94}
  \end{align}
  Therefore, it only remains to estimate the commutator term for the pressure.
  Using Lemma~\ref{L01}, we may expand this term as
  \begin{align}
   \begin{split}
    Z^\alpha \Delta p -\Delta Z^\alpha p
    &=Z^\alpha \partial_{zz} p - \partial_{zz}Z^\alpha p
    = Z^\alpha \partial_{zz} p -
    \sum_{j=0}^{k+1} \sum_{l=0}^{j}
    \tilde{c}^j_{l,\varphi} \tilde{c}^{k+1}_{j,\varphi} Z^l_3 Z^{\tilde{\alpha}}_\hh \partial_{zz} p 
    +
    \sum_{j=0}^{k+1}
    (\tilde{c}^{k+1}_{j,\varphi})' Z^j_3 Z^{\tilde{\alpha}}_\hh \partial_z p 
    \\&=
    -\sum_{j=0}^{k} \sum_{l=0}^{j}
    \tilde{c}^j_{l,\varphi} \tilde{c}^{k+1}_{j,\varphi} Z^l_3 Z^{\tilde{\alpha}}_\hh \partial_{zz} p 
    -
    \sum_{l=0}^{k}
    \tilde{c}^{k+1}_{l,\varphi} Z^l_3 Z^{\tilde{\alpha}}_\hh \partial_{zz} p 
    +
    \sum_{j=0}^{k}
    (\tilde{c}^{k+1}_{j,\varphi})' Z^j_3 Z^{\tilde{\alpha}}_\hh \partial_z p
    \\&= I_1 + I_2 + I_3     
    ,\llabel{EQ95}
   \end{split}
  \end{align}
  where we have used that $(\tilde{c}_{k+1,\varphi}^{k+1})' = (1)' = 0$.
  We shall only treat the highest-order terms in the above equation. Observe that for $I_3$
  this term is $\partial_z Z^{\tilde{\alpha}}_\hh Z^{k}_3 p$ which is
  of lower order and is thus bounded as in~\eqref{EQ.pre.k-1}.
  The highest-order terms of $I_1$ and $I_2$ are both equal to 
  $\tilde{c}_{k,\varphi}^{k+1} Z^{\tilde{\alpha}}_\hh Z^{k}_3 \partial_{zz} p$.
  Using \eqref{pre}, we rewrite this term as
  \begin{align}
   \tilde{c}_{k,\varphi}^{k+1} Z^{\tilde{\alpha}}_\hh Z^{k}_3 \partial_{zz} p
   =
   -\tilde{c}_{k,\varphi}^{k+1} Z^{\tilde{\alpha}}_\hh Z^{k}_3 \Delta_\hh p
   -\tilde{c}_{k,\varphi}^{k+1} Z^{\tilde{\alpha}}_\hh Z^{k}_3 (\partial_i u_j \partial_j u_i)
   .\llabel{EQ97}
  \end{align}
  Since $Z^{\tilde{\alpha}}_\hh Z^{k}_3 (\partial_i u_j \partial_j u_i)$
  is of lower order, it may be estimated as in \eqref{EQ91a} and~\eqref{EQ91b}.
  For $Z^{\tilde{\alpha}}_\hh Z^{k}_3 \Delta_\hh p$, observe that
  \begin{align}
   \Vert Z^{\tilde{\alpha}}_\hh Z^{k}_3 \Delta_\hh p\Vert_{L^2}
   \lec
   \Vert D^2 Z^{\tilde{\alpha}}_\hh Z^{k}_3 p\Vert_{L^2}
   \lec 
   \Vert u\Vert_{4}(\Vert u\Vert_{2,\infty}+\Vert \eta\Vert_{L^\infty}+1)
   +
   \nu \Vert \nabla \nabla_\hh Z^{\tilde{\alpha}}_\hh u\Vert_{L^2}
   ,\llabel{EQ98}
  \end{align}
  thanks to the inductive hypothesis~\eqref{EQ.pre.k-1}.
  Repeating these estimates for the remaining terms in $I_1, I_2$, and $I_3$,
  we conclude that
  \begin{align}
   \Vert Z^\alpha \Delta p -\Delta Z^\alpha p\Vert_{L^2}
   \lec \Vert u\Vert_{4}(\Vert u\Vert_{2,\infty}+\Vert \eta\Vert_{L^\infty}+1)
   +
   \nu \Vert \nabla u\Vert_{4}
   .\label{EQ99}
  \end{align}
  Therefore, by \eqref{EQ93}, \eqref{EQ94}, and \eqref{EQ99},
  we obtain \eqref{EQ.pre.k},
  concluding the proof of Proposition~\ref{P.Pre}.
 \end{proof}
 
 \startnewsection{$L^\infty$ estimates}{sec.inf}
 
 In this section, we estimate $\Vert \eta\Vert_{L^\infty}$ and~$\Vert u\Vert_{2,\infty}$.
 
 \cole
 \begin{Proposition}
  \label{P.Inf}
  Let $u$ be a smooth solution of \eqref{NSE0}--\eqref{hnavierbdry}.
  Then we have the following statements.
  \begin{itemize}
   \item[i.] If $\mu \in \mathbb{R}$, we have
   \begin{align}
    \begin{split}
     \Vert u(t)\Vert_{2,\infty}^2+\Vert \eta(t)\Vert_{L^\infty}^2
     \lec& 
     \Vert u_0\Vert_{2,\infty}^2+\Vert \eta_0\Vert_{L^\infty}^2
     + \int_0^t (\Vert u\Vert_{2,\infty}+\Vert \eta\Vert_{L^\infty}+1)^3
     +\Vert u\Vert_{2,\infty}\Vert D^2 p\Vert_{3}\,ds
     \\&
     +\nu\int_0^t (\Vert \nabla \eta\Vert_{2}^2+\Vert \nabla u\Vert_{4}^2)\,ds,
     \label{EQ.Inf1}
    \end{split}
   \end{align}
   where the implicit constant is independent of~$\nu$.
   \item[ii.] If $\mu \ge 0$, we have
   \begin{align}
    \begin{split}
     \Vert u(t)\Vert_{2,\infty}^2+\Vert \eta(t)\Vert_{L^\infty}^2
     \lec& 
     \Vert u_0\Vert_{2,\infty}^2+\Vert \eta_0\Vert_{L^\infty}^2
     + \int_0^t (\Vert u\Vert_{2,\infty}+\Vert \eta\Vert_{L^\infty}+1)^3
     +\Vert u\Vert_{2,\infty}\Vert D^2 p\Vert_{3}\,ds
     \\&
     +\nu\int_0^t (\Vert \nabla \eta\Vert_{1}^2+\Vert \nabla u\Vert_{4}^2)\,ds
     ,
     \label{EQ.Inf2}
    \end{split}
   \end{align}
   where the implicit constant is independent of~$\nu$.
  \end{itemize} 
 \end{Proposition}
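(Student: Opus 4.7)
The plan is to estimate $\Vert \eta\Vert_{L^\infty}$ and each $\Vert Z^\alpha u\Vert_{L^\infty}$ for $|\alpha|\le 2$ separately, treating the two cases (i) and (ii) identically except where the Navier boundary condition forces a difference.

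For $\Vert \eta\Vert_{L^\infty}$, I would take advantage of the homogeneous Dirichlet boundary condition in \eqref{EQ.eta} and apply either the parabolic maximum principle or, equivalently, an $L^p$ energy estimate on \eqref{EQ.eta}$_1$ followed by the limit $p\to\infty$. Incompressibility annihilates the transport term, and the Dirichlet condition kills the boundary integral from the diffusion, so that
\begin{equation*}
\Vert \eta(t)\Vert_{L^\infty}\le \Vert \eta_0\Vert_{L^\infty}+\int_0^t \Vert \omega\cdot\nabla u_\hh+2\mu\nabla_\hh^\perp p\Vert_{L^\infty}\,ds.
\end{equation*}
The nonlinearity $\omega\cdot\nabla u_\hh$ is controlled via \eqref{EQ62}, \eqref{EQ07}, and \eqref{EQ08} by $(\Vert \eta\Vert_{L^\infty}+\Vert u\Vert_{1,\infty})^2$, while the pressure is handled by the anisotropic embedding \eqref{EQ.emb}, giving $\Vert \nabla p\Vert_{L^\infty}^2\lec \Vert D^2 p\Vert_{3}\Vert \nabla p\Vert_{3}$; invoking Proposition~\ref{P.Pre} together with Young's inequality then produces both the $(\Vert u\Vert_{2,\infty}+\Vert \eta\Vert_{L^\infty}+1)^3$ contribution and the cross term $\Vert u\Vert_{2,\infty}\Vert D^2 p\Vert_{3}$ seen in \eqref{EQ.Inf1}--\eqref{EQ.Inf2}.

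For the conormal $L^\infty$ estimates of $u$, I would apply $Z^\alpha$ to \eqref{NSE0} and assemble an equation for $Z^\alpha u$ whose right-hand side consists of the convection commutator (expanded as in \eqref{EQ13} and \eqref{EQ57}), the Laplacian commutator $\nu Z^\alpha\Delta u-\nu\Delta Z^\alpha u$ given by Lemma~\ref{L01} as in \eqref{EQ51}, and $-\nabla Z^\alpha p$. When $Z^\alpha$ contains at least one factor of $Z_3$, the identity $Z_3|_{z=0}=0$ makes $Z^\alpha u$ vanish on $\partial\Omega$, so the parabolic maximum principle is available and gives
\begin{equation*}
\Vert Z^\alpha u(t)\Vert_{L^\infty}\le \Vert Z^\alpha u_0\Vert_{L^\infty}+\int_0^t\Vert R^\alpha(s)\Vert_{L^\infty}\,ds,
\end{equation*}
where $R^\alpha$ bundles the three sources. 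Each term in $R^\alpha$ is then bounded using \eqref{EQ.int}, \eqref{EQ.u3}, \eqref{EQ.emb}, and Proposition~\ref{P.Pre}; the Laplacian commutator carries an explicit $\nu$ and, after Young's inequality, contributes the $\nu\Vert \nabla u\Vert_4^2$ term visible in \eqref{EQ.Inf1}--\eqref{EQ.Inf2}.

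The delicate case is $Z^\alpha=Z^{\tilde{\alpha}}_\hh$ with $|\tilde{\alpha}|\le 2$, where the Navier condition becomes $\partial_z Z^{\tilde{\alpha}}_\hh u_\hh=2\mu Z^{\tilde{\alpha}}_\hh u_\hh$ at $z=0$, so the maximum principle is no longer available. Here I would test $Z^{\tilde{\alpha}}_\hh$ applied to \eqref{NSE0} against $|Z^{\tilde{\alpha}}_\hh u|^{p-2}Z^{\tilde{\alpha}}_\hh u$ and then pass $p\to\infty$; the dissipation produces the boundary contribution
\begin{equation*}
2\mu\nu\int_{\partial\Omega}|Z^{\tilde{\alpha}}_\hh u_\hh|^p\,d\sigma,
\end{equation*}
analogous to \eqref{b2L^2} and \eqref{EQ142}. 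When $\mu\ge 0$ this integral has the correct sign and is simply discarded, giving \eqref{EQ.Inf2} with only $\nu\Vert \nabla\eta\Vert_1^2$ surviving from the commutator of the Laplacian with $Z^{\tilde{\alpha}}_\hh$. When $\mu\in\mathbb{R}$ it must be absorbed, and the trace theorem together with \eqref{eta} and \eqref{EQ62} forces one additional conormal derivative on $\nabla\eta$, producing the $\nu\Vert \nabla\eta\Vert_2^2$ term appearing in \eqref{EQ.Inf1}. The main obstacle is to carry out this absorption in a manner compatible with $p\to\infty$, i.e., with constants that do not blow up in $p$; this is exactly the mechanism that separates parts (i) and (ii) of Proposition~\ref{P.Inf} and dictates the different regularity requirements on $\nabla u$ in Theorems~\ref{T01} and~\ref{T03}.
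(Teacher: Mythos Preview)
Your treatment of $\Vert\eta\Vert_{L^\infty}$ and of the purely tangential case $Z^\alpha=Z^{\tilde\alpha}_\hh$ is essentially correct, and you rightly locate the distinction between (i) and (ii) in the boundary term produced by the Navier condition. There is, however, a genuine gap in the case $k\geq 1$.

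When $Z^\alpha$ contains a factor of $Z_3$ you plan to apply the maximum principle and bound $\Vert R^\alpha\Vert_{L^\infty}$ directly. But the Laplacian commutator in $R^\alpha$ (cf.~\eqref{EQ105}) contains the leading contribution $\nu\varphi'\partial_{zz}Zu$, which carries \emph{two} normal derivatives. There is no way to control $\Vert\partial_{zz}Zu\Vert_{L^\infty}$ by $\Vert\nabla u\Vert_4$ or $\Vert\nabla\eta\Vert_m$ for any~$m$: the embedding \eqref{EQ.emb} would demand one further normal derivative in~$L^2$, and none of the quantities on the right of \eqref{EQ.Inf1}--\eqref{EQ.Inf2} supplies that. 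Young's inequality cannot repair this, since the obstruction is the order of the normal derivative, not the power of~$\nu$. Note also that for $k=0$ the Laplacian commutator vanishes identically, so the $\nu\Vert\nabla\eta\Vert_1^2$ term in (ii) cannot arise from ``the commutator of the Laplacian with $Z^{\tilde\alpha}_\hh$'' as you write.

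The paper's remedy is to run the $L^p$ energy argument for \emph{all} $|\alpha|\le 2$, not only $k=0$, and to exploit the structure $Z^\alpha u=Z_3(Zu)$ together with the identity $\partial_{zz}f=\varphi^{-1}\partial_z Z_3 f-\varphi^{-1}\varphi'\partial_z f$ of~\eqref{EQ106}. After testing $-\nu\varphi'\partial_{zz}Zu$ against $Z^\alpha u|Z^\alpha u|^{p-2}$, one piece becomes $-\tfrac{\nu}{p}\int\tfrac{\varphi'}{\varphi}\partial_z|Z^\alpha u|^p\,dx$, which after integration by parts equals $\tfrac{\nu}{p}\int(\varphi''\varphi-(\varphi')^2)|\partial_z Zu|^2|Z^\alpha u|^{p-2}\,dx\le 0$, because $\varphi''\varphi-(\varphi')^2<0$ on $(0,\infty)$; see \eqref{EQ107}--\eqref{EQ108}. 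This sign condition is the key cancellation: what survives involves only $\nu\Vert\partial_z u\Vert_{1,\infty}^2$, and it is this quantity that, via \eqref{EQ115}, produces the $\nu\Vert\nabla\eta\Vert_1^2+\nu\Vert\nabla u\Vert_4^2$ appearing in both (i) and~(ii). The upgrade to $\nu\Vert\nabla\eta\Vert_2^2$ in (i) then enters separately through the boundary absorption \eqref{EQ145}--\eqref{EQ146} when $\mu<0$.
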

 \colb
 
 \begin{proof}[Proof of Proposition~\ref{P.Inf}]
  
  First, the maximum principle for \eqref{EQ.eta} implies
  \begin{align}
   \frac{d}{dt}\Vert \eta\Vert_{L^\infty}
   \lec
   \Vert \omega\Vert_{L^\infty}\Vert \nabla u\Vert_{L^\infty}
   + \Vert \nabla_\hh p\Vert_{L^\infty}
   \lec
   (\Vert \eta\Vert_{L^\infty} + \Vert u\Vert_{1,\infty})^2
   + \Vert p\Vert_{1,\infty},
   \label{EQ101}
  \end{align}
  where we have used \eqref{EQ62}, \eqref{EQ07}, and \eqref{EQ08}
  in the last inequality. 
  Note that the inequality independent of the sign of~$\mu$.
  Next, to treat the term $\Vert u\Vert_{2,\infty}$,
  we only consider the highest order derivative and note that lower order derivatives
  are bounded using the same methods.
  Now let 
  $Z^\alpha = Z^{\tilde{\alpha}}_\hh Z^k_3$ be a conormal
  derivative of order two, i.e., $|\tilde{\alpha}| + k =2$.
  We apply $Z^\alpha$ to \eqref{NSE0} and write
  \begin{align}
   Z^\alpha u_t
   - \nu \Delta Z^\alpha u
   + u\cdot \nabla Z^\alpha u
   = 
   u\cdot \nabla Z^\alpha u - Z^\alpha(u\cdot \nabla u) 
   - Z^\alpha \nabla p
   + \nu Z^\alpha \Delta u
   - \nu \Delta Z^\alpha u
   .
   \label{EQ102}
  \end{align}
  We complete the proof of Proposition~\ref{P.Inf} in two steps. 
  First, we take $p\ge4$, test the above equation with 
  $Z^\alpha u |Z^\alpha u|^{p-2}$, and then pass to the
  limit as~$p \to \infty$. 
  In the second step, we estimate the terms that emerge
  in the limit.\\
  
  \noindent\texttt{Step~1.} Test \eqref{EQ102} with $Z^\alpha u |Z^\alpha u|^{p-2}$.\\
  
  Upon integrating against $Z^\alpha u |Z^\alpha u|^{p-2}$, 
  the left-hand side of \eqref{EQ102} yields  
  \begin{align}
   \frac{1}{p}\frac{d}{dt}\Vert Z^\alpha u\Vert_{L^p}^p
   +\nu\int\left(
   \frac{(p-2)}{4} \big| \nabla |Z^\alpha u|^2 \big|^2 |Z^\alpha u|^{p-4}
   +|\nabla Z^\alpha u|^2 |Z^\alpha u|^{p-2}\right) \,dx
   + 2\mu \nu \delta_{0k} \Vert Z^\alpha u_\hh\Vert_{L^p(\partial \Omega)}^p
   ,\label{EQ103}
  \end{align}
  since the boundary term appears only for~$k=0$.
  When $\mu$ is non-negative, the last term in \eqref{EQ103} is coercive.
  This is the only instance where the sign of $\mu$ affects 
  the regularity assumption on~$\eta_0$. 
  In other words, when $\mu \ge 0$, we are able to close the estimates using $\eta_0 \in H^1_\cco(\Omega)$.
  On the other hand,
  when $\mu < 0$, we use the trace theorem and interpolation
  to obtain
  \begin{align}
   \nu\Vert Z^\alpha u_\hh\Vert_{L^p(\partial \Omega)}^p
   \lec
   \nu\Vert Z^\alpha u_\hh\Vert_{W^{\frac{1}{p},p}}^p
   \lec
   \nu\Vert Z^\alpha u_\hh\Vert_{L^p}^\frac{(p+4)p}{p+6} 
   \Vert \nabla Z^{\alpha} u_\hh\Vert_{L^6}^\frac{2p}{6+p}
   .\label{EQ145}
  \end{align}
  Recalling that $|\alpha|=2$, it is evident from  
  $\Vert \nabla Z^{\alpha} u_\hh\Vert_{L^6}$ that we need 
  to control at least two conormal derivatives of~$\eta$.
  Proceding with the right hand side of \eqref{EQ102},
  the first three terms give
  \begin{align}
   (u\cdot \nabla Z^\alpha u - Z^\alpha(u\cdot \nabla u) 
   - Z^\alpha \nabla p,
   Z^\alpha u |Z^\alpha u|^{p-2})
   \lec 
   \Vert u\cdot \nabla Z^\alpha u - Z^\alpha(u\cdot \nabla u) 
   - Z^\alpha \nabla p\Vert_{L^p}
   \Vert Z^\alpha u\Vert_{L^p}^{p-1} 
   \label{EQ104},
  \end{align}
  which leaves us with the commutator term for the Laplacian,
  for which we have
  \begin{equation}
   \label{EQ105}
   \nu Z^\alpha \Delta u
   - \nu \Delta Z^\alpha u =
   \begin{cases}
    0, &k = 0 \\
    -\nu 
    (2\varphi' \partial_{zz} Z_\hh u  + \varphi'' \partial_z Z_\hh u), &k=1  \\
     -\nu 
    (4\varphi' \partial_{zz} Z_3 u
    -4 (\varphi')^2 \partial_{zz} u
    +4\varphi'' \partial_z Z_3 u
    -5 \varphi' \varphi'' \partial_z u
    +\varphi''' Z_3 u), &k=2
    .
    \end{cases}
  \end{equation}
  Note that when $k\ge 1$ the leading order term is of the form
  $-\nu \varphi' \partial_{zz} Z u$, and to treat this term
  we use
  \begin{align}
   \partial_{zz} f
   = \frac{1}{\varphi} Z_3 \partial_z f
   = \frac{1}{\varphi} \partial_z Z_3 f 
   -\frac{\varphi'}{\varphi}\partial_z f
   .\label{EQ106} 
  \end{align}
  Upon letting $f = Zu$, we obtain
  \begin{align}
   \begin{split}
    -\nu \int \varphi' \partial_{zz} Zu Z_3 Z u |Z^\alpha u|^{p-2}\,dx
    =& -\nu 
    \int \frac{\varphi'}{\varphi} \partial_z Z_3 Z u Z_3 Z u |Z^\alpha u|^{p-2}\,dx
    \\&+\nu
    \int \frac{(\varphi')^2}{\varphi} \partial_z Zu Z_3 Zu |Z^\alpha u|^{p-2} \,dx
    = I_1 + I_2.
    \label{EQ107}  
   \end{split}
  \end{align}
  We point out that $I_1 \le 0$, and to show this, we write
  \begin{align}
   \begin{split}
    I_1&=
    -\frac{\nu}{p}
    \int \frac{\varphi'}{\varphi} \partial_z |Z^\alpha u|^p\,dx
    =\frac{\nu}{p}
    \int \frac{\varphi''\varphi - (\varphi')^2}{\varphi^2} |Z^\alpha u|^p\,dx
    \\&=\frac{\nu}{p}
    \int (\varphi''\varphi - (\varphi')^2) |\partial_z Z u|^2 |Z^\alpha u|^{p-2}\,dx
    \le 0,
    \label{EQ108}
   \end{split}
  \end{align} 
  where the last inequality follows from
  \begin{align}
   \varphi''(z)\varphi(z) - (\varphi'(z))^2 < 0 \comma z>0
   .\llabel{EQ109}
  \end{align}
  We proceed with $I_2$ by writing
  \begin{align}
   I_2 = 
   \nu \int (\varphi')^2 |\partial_z Z u|^2 |Z^\alpha u|^{p-2}\,dx
   \lec 
   \nu \Vert \partial_z Z u\Vert_{L^p}^2 \Vert Z^\alpha u\Vert_{L^p}^{p-2}
   .\label{EQ110}
  \end{align}
  Going back to \eqref{EQ105}, the next term we treat is~$\partial_{zz} u$.
  Upon applying \eqref{EQ106} for $f = u$, it follows that
  \begin{align}
   \begin{split}
    \nu \int (\varphi') \partial_{zz} u Z_3^2 u |Z_3^2 u|^{p-2}\,dx 
    &=\nu \int \frac{(\varphi')^2}{\varphi} \partial_z Z_3 u Z_3^2 u |Z_3^2 u|^{p-2}\,dx 
    -\nu \int \frac{(\varphi')^3}{\varphi} \partial_z u \partial_z Z_3 u |Z_3^2 u|^{p-2}\,dx 
    \\&=\nu \int (\varphi')^2 |\partial_z Z_3 u|^2 |Z_3^2 u|^{p-2}\,dx 
    -\nu \int (\varphi')^3 \partial_z u \partial_z Z_3 u |Z_3^2 u|^{p-2}\,dx 
    \\&\lec
    \nu 
    (\Vert \partial_z Z_3 u\Vert_{L^p}^2 
    +\Vert \partial_z u\Vert_{L^p}
    \Vert \partial_z Z_3 u\Vert_{L^p}) 
    \Vert Z^\alpha u\Vert_{L^p}^{p-2}
    .\label{EQ111}
   \end{split}
  \end{align}
We remark that the remaining terms in \eqref{EQ105}
are of lower order, and we estimate them using H\"older's inequality.
Therefore, collecting \eqref{EQ105}, \eqref{EQ107}, \eqref{EQ108},
\eqref{EQ110}, and \eqref{EQ111}, it follows that
  \begin{align}
   \begin{split}
    \nu\int  (Z^\alpha \Delta u- \nu \Delta Z^\alpha u)
    Z^\alpha u |Z^\alpha u|^2\,dx
    \lec I_1&+
    \nu\Vert Z^\alpha u\Vert_{L^p}^{p-2} 
    (\Vert \partial_z Z u\Vert_{L^p}^2 +\Vert \partial_z u\Vert_{L^p}\Vert \partial_z Z_3 u\Vert_{L^p})
    \\&+\nu \Vert Z^\alpha u\Vert_{L^p}^{p-1}
    (\Vert \partial_z Z u\Vert_{L^p} + \Vert \partial_z u\Vert_{L^p} + \Vert Z_3 u\Vert_{L^p})
    .\label{EQ112}
   \end{split}
  \end{align}
  Now, thanks to \eqref{EQ102}--\eqref{EQ104}, \eqref{EQ112} and since $I_1 < 0$
  we arrive at
  \begin{align}
   \begin{split}
    \frac{1}{p}\frac{d}{dt}\Vert Z^\alpha u\Vert_{L^p}^p
    \lec&
    \Vert Z^\alpha u\Vert_{L^p}^{p-1}
    (\Vert u\cdot \nabla Z^\alpha u - Z^\alpha(u\cdot \nabla u) 
    - Z^\alpha \nabla p\Vert_{L^p})
    \\&
    +\nu \Vert Z^\alpha u\Vert_{L^p}^{p-1}
    (\Vert \partial_z Z u\Vert_{L^p} + \Vert \partial_z u\Vert_{L^p} + \Vert Z_3 u\Vert_{L^p})
    \\&
    +\nu\Vert Z^\alpha u\Vert_{L^p}^{p-2} 
    (\Vert \partial_z Z u\Vert_{L^p}^2 +\Vert \partial_z u\Vert_{L^p}\Vert \partial_z Z_3 u\Vert_{L^p})
    \\&
    +\nu\delta_{k0}\Vert Z^\alpha u_\hh\Vert_{L^p}^\frac{(p+4)p}{p+6} 
    \Vert \nabla Z^{\tilde{\alpha}}_\hh u_\hh\Vert_{L^6}^\frac{2p}{6+p}    
    ,\llabel{EQ113}
   \end{split}
  \end{align}
  from where, canceling $\Vert Z^\alpha u\Vert_{L^p}^{p-2}$,
  letting $p\to \infty$, and integrating in time, we obtain
  \begin{align}
   \begin{split}
    \Vert Z^\alpha u(t)\Vert_{L^\infty}^2
    \lec&
    \Vert u_0\Vert_{2,\infty}+\int_0^t 
    \Vert u\Vert_{2,\infty}
    (\Vert u\cdot \nabla Z^\alpha u - Z^\alpha(u\cdot \nabla u)\Vert_{L^\infty})
    +\Vert Z^\alpha \nabla p\Vert_{L^\infty}+ \Vert u\Vert_{W^{1,\infty}})
    \\&+\nu\Vert u\Vert_{2,\infty}\Vert \partial_z u\Vert_{1,\infty} 
    +\nu 
    (\Vert \partial_z u\Vert_{1,\infty}^2 
    +\Vert u\Vert_{W^{1,\infty}}\Vert \partial_z u\Vert_{1,\infty}
    +\delta_{k0}\Vert \nabla Z^{\tilde{\alpha}}_\hh u_\hh \Vert_{L^6}^2)
    \,ds         
    .\label{EQ114}
   \end{split}
  \end{align}
  \\
  
  \noindent\texttt{Step~2.} We estimate the terms on the right hand side of~\eqref{EQ114}.\\
  
  We start with $\nu\Vert \partial_z u\Vert_{1,\infty}^2$. Utilizing  
  \eqref{EQ.emb}, for $(s_1,s_2) = (0,3)$, it follows that
  \begin{align}
   \nu\int_0^t \Vert \partial_z u(s)\Vert_{1,\infty}^2\,ds
   \lec \nu\int_0^t (\Vert \nabla \eta(s)\Vert_{1}^2 + \Vert \nabla u(s)\Vert_{4}^2)\,ds
   .\label{EQ115}
  \end{align}
  Next, for the term involving the horizontal velocity,
  recalling that it is present only when $\mu <0$, we write
  \begin{align}
   \nu\int_0^t \Vert \nabla Z^{\tilde{\alpha}}_\hh u_\hh (s)\Vert_{L^6}^2\,ds
   \lec 
   \nu \int_0^t(\Vert \nabla \eta\Vert_{2}^2
   +\Vert \nabla u_\hh(s)\Vert_{4}^2) \,ds
   ,
   \label{EQ146}
  \end{align}
  where we have used the embedding $H^1 \subset L^6$.
  Next, we consider the commutator term by writing
  \begin{align}
   u\cdot \nabla Z^\alpha u - Z^\alpha(u\cdot \nabla u)
   = -(1-\delta_{k0})\sum_{j=0}^{k-1}
   \tilde{c}^k_{j,\varphi} \frac{u_3}{\varphi} Z_3 Z^{\tilde{\alpha}}_\hh Z^j_3 u  
   -\sum_{1\le |\beta|\le |\alpha|}
   {\alpha \choose \beta}
   Z^\beta u \cdot Z^{\alpha-\beta} \nabla u
   =J_1 + J_2
   ,\label{EQ116}     
  \end{align}
  from where it follows that $J_1$ satisfies
  \begin{align}
   \Vert J_1\Vert_{L^\infty} \lec \Vert u\Vert_{1,\infty}\Vert u\Vert_{2,\infty}
   ,\label{EQ117} 
  \end{align}
  while for $J_2$ we have
  \begin{align}
   J_2 =
   \sum_{1\le |\tilde{\beta}| \le |\alpha|}
   {\alpha \choose \beta} 
   (Z^{\beta} u_\hh 
   \cdot 
   \nabla_\hh Z^{\alpha-\beta} u
   + 
   Z^{\beta} u_3 
   Z^{\alpha-\beta} \partial_z u)
   = J_{21} + J_{22}
   \lec \Vert u\Vert_{1,\infty}\Vert u\Vert_{2,\infty} + J_{22}.
   \label{EQ118}
  \end{align}
  Upon introducing lower order terms when necessary, we obtain
  \begin{equation}
  	\label{EQ119}
  	\Vert Z^{\beta} u_3  Z^{\alpha - \beta} \partial_z u\Vert_{L^\infty}
  	\lec
  	\begin{cases}
  		\left\Vert Z^{\beta} \frac{u_3}{\varphi}\right\Vert_{\infty} \Vert Z^{\alpha - \beta} Z_3 u\Vert_{L^\infty}
  		\lec \Vert u\Vert_{2,\infty}^2, &|\beta|=1
  		 \\
  		\Vert Z^{\beta} u_3\Vert_{L^\infty}\Vert Z^{\alpha - \beta} \partial_z u\Vert_{L^\infty}
  		\lec \Vert u\Vert_{2,\infty}(\Vert \eta\Vert_{L^\infty}+\Vert u\Vert_{1,\infty})
  		, &|\beta|=2
  		,
  	\end{cases}
  \end{equation}
from where, by collecting \eqref{EQ116}--\eqref{EQ119}
  and integrating in time, it follows that
  \begin{align}
   \int_0^t \Vert u(s)\cdot \nabla Z^\alpha u(s) - Z^\alpha(u(s)\cdot \nabla u(s))\Vert_{L^\infty} 
   \Vert u\Vert_{2,\infty}\,ds
   \lec \int_0^t \Vert u\Vert_{2,\infty}^2(\Vert \eta\Vert_{L^\infty}+\Vert u\Vert_{2,\infty})\,ds
   .\label{EQ120}
  \end{align}
  Going back to the right hand side of \eqref{EQ114}, 
  it only remains to estimate the pressure term.
  Another application of \eqref{EQ.emb} yields
  \begin{align}
   \Vert \nabla p\Vert_{2,\infty}
   \lec
   \Vert \partial_z p\Vert_{2,\infty}+\Vert p\Vert_{3,\infty}
   \lec
   \Vert D^2 p\Vert_{3}^\frac{1}{2}\Vert \nabla p\Vert_{4}^\frac{1}{2}+\Vert D^2 p\Vert_{3}
   \lec
   \Vert D^2 p\Vert_{3}
   .\label{EQ121}
  \end{align}
  
  To conclude, we multiply \eqref{EQ101} by $\Vert \eta\Vert_{L^\infty}$ and integrate in time.
  Then, combining with \eqref{EQ114} and considering
  \eqref{EQ115}, \eqref{EQ146}, \eqref{EQ120}, and \eqref{EQ121}
  we obtain~\eqref{EQ.Inf1}. When $\mu \ge 0$, we omit \eqref{EQ146} and arrive at~\eqref{EQ.Inf2}.
 \end{proof}

 \startnewsection{Concluding the a~priori estimates and the proofs of Theorems~\ref{T01} and~\ref{T03}(i)}{sec.apri}
 
 Note that combining \eqref{EQ.Con} and \eqref{EQ.Nor} for $m=2$ gives
 \begin{align}
  \begin{split}
   \Vert u(t)\Vert_{5}^2+\Vert \eta(t)\Vert_{2}^2
   &+c_0 \nu\int_0^t 
   (\Vert \nabla u(s)\Vert_{5}^2 + \Vert \nabla \eta(s)\Vert_{2}^2)\,ds
   \\
   &\lec \Vert u_0\Vert_{5}^2+\Vert \eta_0\Vert_{2}^2 
   + \int_0^t N^3\,ds
   + \int_0^t (\Vert \nabla p\Vert_{3}\Vert u\Vert_{4} + \Vert \nabla p\Vert_{2}\Vert \eta\Vert_{2})\,ds
   ,\llabel{EQ147}          
  \end{split}
 \end{align}
 recalling that $N$ is as in~\eqref{ap1}.
 Now, using the pressure estimates \eqref{EQ.Pre}, we have
 \begin{align}
  \begin{split}
   \Vert u(t)\Vert_{4}^2+\Vert \eta(t)\Vert_{2}^2
   &
   +c_0 \nu\int_0^t 
   (\Vert \nabla u(s)\Vert_{4}^2 + \Vert \nabla \eta(s)\Vert_{2}^2)\,ds
   \\
   &
   \lec N^2(0) + \int_0^t N^3\,ds
   + \int_0^t \nu\Vert \nabla u\Vert_{4}(\Vert u\Vert_{4}+\Vert \eta\Vert_{2})\,ds
   .\llabel{EQ25}          
  \end{split}
 \end{align}
 Therefore, upon absorbing the factors of $\nu\Vert \nabla u\Vert_{4}$, we obtain
 \begin{align}
  \Vert u(t)\Vert_{5}^2+\Vert \eta(t)\Vert_{2}^2
  +c_0 \nu\int_0^t 
  (\Vert \nabla u(s)\Vert_{5}^2 + \Vert \nabla \eta(s)\Vert_{2}^2)\,ds
  \lec N^2(0) + \int_0^t N^3\,ds
  .\label{EQ26}          
 \end{align}
 Now, we use \eqref{EQ26} in \eqref{EQ.Inf1} so that we have
 \begin{align}
  \Vert u\Vert_{2,\infty}^2+\Vert \eta\Vert_{L^\infty}^2
  \lec
  N^2(0)+\int_0^t N^3 \,ds
  +\int_0^t \Vert u\Vert_{2,\infty}\Vert D^2 p\Vert_{3}\,ds
  .\llabel{EQ130}
 \end{align}
 We may again use the pressure estimates \eqref{EQ.Pre}
 and absorb the emerging factors of~$\nu\Vert \nabla u\Vert_{4}$.
 Then, we combine the resulting inequality with \eqref{EQ26} which yields
 \eqref{ap1} and~\eqref{ap2}. To establish \eqref{ap3} and \eqref{ap4},
 we repeat these steps using \eqref{EQ.Con}, \eqref{EQ.Nor} for $m=1$, \eqref{EQ.Pre},
 and~\eqref{EQ.Inf2}. This concludes the proof of Proposition~\ref{P.Ap}.
 
 Now, having established the a~priori estimates,
 we may prove Theorems~\ref{T01} and~\ref{T03}(i) simultaneously. 
 First, we may smoothen the initial
 data and use well-posedness results to obtain solutions
 to which Proposition~\ref{P.Ap} applies.
 Then, we can pass to the limit in \eqref{NSE0}--\eqref{hnavierbdry}
 showing that the sequence of solutions is Cauchy in $L^\infty(0,T;L^2(\Omega))$.
 This step does not require $\nabla u \in H^2_\cco(\Omega)$ and hence it applies in both cases, i.e.,
 when $\nabla u \in H^1_\cco(\Omega)$ and $\nabla u \in H^1_\cco(\Omega)$.
 Moreover, for each $\nu$ we have 
 a unique solution thanks to the Lipschitz continuity
 of the initial data.
 Finally, employing the a~priori estimates
 \eqref{ap1}--\eqref{ap4}, the lifespans of both solutions, 
 which may be different, 
 are independent of~$\nu$. 
 
 \startnewsection{Proofs of Theorems~\ref{T02} and~\ref{T03}(ii)}{sec.main}
 
 We first prove that, under weaker regularity and boundedness assumptions,
 the sequence of solutions $\{u^\nu\}_\nu$ is Cauchy in $L^\infty(0,T;L^2(\Omega))$.
 
 \cole
 \begin{Proposition}[Compactness of $\{u^\nu\}_\nu$]
\label{P01}
Let $u_0 \in H^1(\Omega) \cap W^{1,\infty}(\Omega)$ be
such that $\div u_0 = 0$, and $u_0 \cdot n= 0$ on~$\partial \Omega$.
  Assume that
  there exists a $T>0$ such that for all $\nu \in (0,1)$
  and $\mu \in \mathbb{R}$ fixed there exists a unique solution
  $u^\nu \in L^\infty(0,T;H^1(\Omega)\cap W^{1,\infty}(\Omega))$
  to \eqref{NSE0}--\eqref{hnavierbdry} on~$[0,T]$.
Moreover, assume that there exists a constant $M>0$ such that for all~$\nu \in (0,1)$
  \begin{align}
   \sup_{[0,T]}
   (\Vert u^\nu(t)\Vert_{H^1}^2
   +\Vert u^\nu(t)\Vert_{W^{1,\infty}}^2)
   +\nu
   \int_0^T \Vert D^2 u^\nu(s)\Vert_{L^2}^2\,ds
   \le M.
   \label{EQ.comp}
  \end{align}
  Then, $\{u^\nu\}_\nu$ is a Cauchy sequence in $L^\infty(0,T;L^2(\Omega))$
  with
  \begin{align}
   \sup_{[0,T]}\Vert u^{\nu_1}-u^{\nu_2}\Vert_{L^2}^2
   \lec \nu_1 + \nu_2,
   \label{cauchy}
  \end{align}
  where the implicit constant is independent of $\nu_1$ and~$\nu_2$.
 \end{Proposition}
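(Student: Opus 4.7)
The strategy is a difference/energy argument carried out on a \emph{symmetric} reformulation of the equation, so that the weighted bound $\nu\int_0^T\Vert D^2 u^\nu\Vert_{L^2}^2\,ds\le M$ from~\eqref{EQ.comp} can be exploited with the correct scaling. Setting $w=u^{\nu_1}-u^{\nu_2}$ and $q=p^{\nu_1}-p^{\nu_2}$, and rewriting $u^{\nu_1}\cdot\nabla u^{\nu_1}-u^{\nu_2}\cdot\nabla u^{\nu_2}=u^{\nu_1}\cdot\nabla w+w\cdot\nabla u^{\nu_2}$, the subtraction of the two momentum equations gives
\begin{equation*}
\partial_t w+u^{\nu_1}\cdot\nabla w+w\cdot\nabla u^{\nu_2}+\nabla q=\nu_1\Delta u^{\nu_1}-\nu_2\Delta u^{\nu_2},
\end{equation*}
with $\nabla\cdot w=0$ and the boundary conditions $w_3|_{z=0}=0$ and $\partial_z w_\hh|_{z=0}=2\mu w_\hh$ inherited from~\eqref{hnavierbdry}. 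The crucial point is to leave \emph{both} viscosity terms on the right-hand side, rather than keeping $\nu_1\Delta w$ on the left.

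Testing against $w$, the transport term $\int_\Omega(u^{\nu_1}\cdot\nabla w)\cdot w\,dx$ and the pressure term $\int_\Omega\nabla q\cdot w\,dx$ both vanish after integration by parts, using $\nabla\cdot u^{\nu_1}=\nabla\cdot w=0$ together with $u^{\nu_1}\cdot n=w\cdot n=0$ on~$\partial\Omega$. The stretching term obeys $\left|\int_\Omega(w\cdot\nabla u^{\nu_2})\cdot w\,dx\right|\le \Vert\nabla u^{\nu_2}\Vert_{L^\infty}\Vert w\Vert_{L^2}^2\le M\Vert w\Vert_{L^2}^2$ by~\eqref{EQ.comp}, while Cauchy--Schwarz and Young's inequality applied to the viscous right-hand side give
\begin{equation*}
\frac{d}{dt}\Vert w\Vert_{L^2}^2\lec \Vert w\Vert_{L^2}^2 +\nu_1^2\Vert\Delta u^{\nu_1}\Vert_{L^2}^2+\nu_2^2\Vert\Delta u^{\nu_2}\Vert_{L^2}^2.
\end{equation*}
Applying Gr\"onwall with $w(0)=0$ and using the factorization $\nu_i^2\int_0^T\Vert\Delta u^{\nu_i}\Vert_{L^2}^2\,ds=\nu_i\cdot\bigl(\nu_i\int_0^T\Vert\Delta u^{\nu_i}\Vert_{L^2}^2\,ds\bigr)\le M\nu_i$ produces $\sup_{[0,T]}\Vert w(t)\Vert_{L^2}^2\le Ce^{CMT}M(\nu_1+\nu_2)$, which is precisely~\eqref{cauchy}.

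The only genuine obstacle is spotting the need for this symmetric split. The more obvious approach of retaining $\nu_1\Delta w$ on the left produces the forcing $(\nu_1-\nu_2)\Delta u^{\nu_2}$, and any attempt to balance it against the dissipation $\nu_1\Vert\nabla w\Vert_{L^2}^2$ via Young's inequality generates the prefactor $(\nu_1-\nu_2)^2/\nu_1$, which fails to remain bounded as $\nu_1\to 0$ with $\nu_2$ fixed. Discarding the dissipation entirely and extracting one power of $\sqrt{\nu_i}$ from each right-hand side contribution via the weighted bound restores the claimed rate $\nu_1+\nu_2$. Observe also that the Navier boundary condition enters only through $w\cdot n=0$, so the relation on $\partial_z w_\hh$ plays no role in the argument; in particular, the sign of $\mu$ is irrelevant here.
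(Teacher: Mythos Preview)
Your proof is correct, but it takes a genuinely different route from the paper's.

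The paper writes the difference equation as
\[
U_t - (\nu_1-\nu_2)\Delta u^1 - \nu_2 \Delta U + U\cdot\nabla u^1 + u^2\cdot\nabla U + \nabla P = 0,
\]
keeps $-\nu_2\Delta U$ on the left, and integrates by parts on \emph{both} viscous contributions. This produces the dissipation $\nu_2\Vert\nabla U\Vert_{L^2}^2$, the forcing $(\nu_2-\nu_1)\int\nabla u^1\cdot\nabla U$, and two boundary terms coming from the Navier condition. The forcing is then bounded directly by $|\nu_1-\nu_2|\,\Vert u^1\Vert_{H^1}\Vert U\Vert_{H^1}\lesssim(\nu_1+\nu_2)M$, using only the uniform $H^1$ bound from~\eqref{EQ.comp}; the boundary terms are handled by the trace inequality at the same cost. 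In particular, the weighted hypothesis $\nu\int_0^T\Vert D^2 u^\nu\Vert_{L^2}^2\,ds\le M$ is not actually used in the paper's argument.

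Your symmetric split avoids integration by parts on the viscous terms entirely, so no boundary terms appear and the relation on $\partial_z w_\hh$ is indeed irrelevant; the price is that you must invoke the weighted $D^2$ bound. Both approaches yield the same rate. Your final paragraph slightly misdiagnoses the ``obvious'' alternative: retaining $\nu_2\Delta U$ on the left and integrating by parts does work, precisely because $\Vert\nabla U\Vert_{L^2}$ is itself uniformly bounded by~\eqref{EQ.comp} and need not be absorbed into the dissipation. The failure you describe only occurs if one insists on absorbing into $\nu_1\Vert\nabla w\Vert_{L^2}^2$, which is unnecessary here.
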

 \colb
 
 \begin{proof}[Proof of Proposition~\ref{P01}]
  Let $\nu_1,\nu_2 \in (0,1)$, and
  denote by $(u^1,p^1)$ and $(u^2,p^2)$ two solutions
  to \eqref{NSE0}--\eqref{hnavierbdry}, 
  emanating from the same initial data, 
  with the parameters $\nu_1$ and $\nu_2$ respectively.
  Upon letting $U=u_1-u_2$,
  and recalling that $\mu \ge 0$ is fixed, 
  we observe that the difference of solutions $U$ satisfies
  \begin{align}
   U_t - (\nu_1-\nu_2)\Delta u^1 - \nu_2 \Delta U
   +U \cdot \nabla u^1 + u^2 \cdot \nabla U + \nabla P 
   =0, \text{ and } \div U =0,
   \label{diff}
  \end{align} 
  where $P = p^1 - p^2$.
  This equation is coupled with the boundary conditions
  \begin{align}
   U_3 = 0 \text{ and } \partial_z U_\hh = 2\mu U_\hh
   \comma (x,t) \in \{z=0\}\times (0,T)
   .\label{b.diff}
  \end{align}
  Testing \eqref{diff} with $U$ and using~\eqref{b.diff},
  we obtain
  \begin{align}
   \begin{split}
    \frac{1}{2}\frac{d}{dt}\Vert U\Vert_{L^2}^2
    +\nu_2 \Vert \nabla U\Vert_{L^2}^2
    =&
    (\nu_2-\nu_1) \left(
    \int \nabla u^1 \nabla U \,dx
    +2\mu \int_{\partial \Omega} u^1_\hh U_\hh \,d\sigma\right)
    \\&
    -2\mu \nu_2 \Vert U_h\Vert_{L^2(\partial \Omega)}^2 
    - \int U \cdot \nabla u^1 U \,dx
    .\label{EQ149}
   \end{split}
  \end{align}
  As a consequence of the uniform bounds, we may 
  allow the implicit constant to depend on the upper bound~$C$
  in \eqref{EQ.comp} for the rest of this section.
  With this convention, \eqref{EQ149} implies
  \begin{align}
   \frac{d}{dt}\Vert U\Vert_{L^2}^2
   \lec
   (\nu_1+ \nu_2) (\Vert U\Vert_{H^1}\Vert u^1\Vert_{H^1} + \Vert U\Vert_{H^1}^2)
   +\Vert U\Vert_{L^2}^2\Vert \nabla u^1\Vert_{L^\infty}
   \lec (\nu_1+ \nu_2) + \Vert U\Vert_{L^2}^2
   ,\llabel{EQ151}  
  \end{align}
  from where, applying the Gronwall inequality on $(0,T)$, we get 
  \begin{align}
   \sup_{[0,T]}\Vert U\Vert_{L^2}^2 \lec \nu_1 + \nu_2
   ,\llabel{EQ154}
  \end{align}
  establishing~\eqref{cauchy}.
 \end{proof}
 
 We note that Proposition~\ref{P01} implies that 
 the sequence of solutions constructed in 
 Theorems~\ref{T01} and~\ref{T03}(i) is Cauchy in 
 $L^\infty(0,T;L^2(\Omega)$. This is true since Proposition~\ref{P01}
 holds under weaker regularity and boundedness assumptions and there is 
 no restriction on the sign of~$\mu$. To show that \eqref{invlimit} holds, 
 we use that $\nabla u^\nu \in L^\infty$ is uniformly bounded and 
 interpolate by writing
 \begin{align}
  \Vert u^1 - u^2\Vert_{L^\infty}^2
  \lec
  \Vert \nabla (u^1 -  u^2)\Vert_{L^\infty}^\frac{6}{5}\Vert u^1-u^2\Vert_{L^2}^\frac{4}{5}
  \lec
  (\nu_1+\nu_2)^\frac{2}{5}
  ,\llabel{EQ135}
 \end{align}
 where $u^1$ and $u^2$ are as in the proof of Proposition~\ref{P01}.
 Finally, passing to a subsequence concludes the proofs of Theorems~\ref{T02}
 and~\ref{T03}(ii). The uniqueness follows from the Lipschitz continuity of the data.
 
 \startnewsection{Euler equations in Sobolev conormal spaces}{sec.e}
 
 In this section, we sketch the proof of Theorem~\ref{T04}. For some $T>0$,
 we consider a smooth solution for \eqref{euler}--\eqref{eulerb} on $\Omega \times [0,T]$.
 We first note that, upon repeating the estimates leading to Proposition~\ref{P.Con},
 and to Proposition~\ref{P.Pre} for $\nu = 0$, we obtain
 \begin{align}
   \Vert u(t)\Vert_{4}^2
   \lec
   \Vert u_0\Vert_{4}^2
   +\int_0^t
   \bigl(
   \Vert u\Vert_{4}^2 
   (\Vert \partial_z u\Vert_{W^{1,\infty}}+
   \Vert u\Vert_{2,\infty}+1
   )+\Vert u\Vert_{4}\Vert \nabla p\Vert_{3}
   \bigr)\,ds
   ,\label{EQ160}
 \end{align}
 and
 \begin{align}
  \Vert D^2 p\Vert_{3}+\Vert \nabla p\Vert_{3}
  \lec 
  \Vert u\Vert_{4}(\Vert u\Vert_{2,\infty}+\Vert \partial_z u\Vert_{L^\infty}+1)
  ,\label{EQ161}
 \end{align} 
 where we have considered $\partial_z u$ instead of~$\eta$.
 Next, to estimate the Lipschitz norm of $u$,  we consider the vorticity formulation
 \begin{align}
  \omega_t + u \cdot \nabla \omega = \omega \cdot \nabla u 
  ,\llabel{EQ162}
 \end{align}
 from where the maximum principle leads to
 \begin{align}
  \frac{d}{dt}\Vert \omega\Vert_{L^\infty}^2
   \lec
    \Vert \omega\Vert_{L^\infty}^2\Vert \nabla u\Vert_{L^\infty}
    .\llabel{EQ163}
 \end{align}
 Next, we repeat the remaining estimates in Section~\ref{sec.inf}, recalling that $\nu=0$,
 yielding a bound on~$\Vert u\Vert_{2,\infty}$. Namely, we arrive at
  \begin{align}
    \Vert u(t)\Vert_{2,\infty}^2+\Vert \omega(t)\Vert_{L^\infty}^2
   \lec
   \Vert u_0\Vert_{2,\infty}^2+\Vert \omega_0\Vert_{L^\infty}^2
   + \int_0^t
   \bigl(
   (\Vert u\Vert_{2,\infty}+\Vert \omega\Vert_{L^\infty}+1)^3
   +\Vert u\Vert_{2,\infty}\Vert D^2 p\Vert_{3}
   \bigr)
   \,ds
   .\label{EQ164}
   \end{align}
 Since the term $\nu\Delta u$ does not appear, we do not obtain any
 boundary terms, and thus
 the assumptions on the sign of the friction parameter $\mu$ are not relevant.
 Now, we note that
 \begin{align}
  \Vert \nabla u\Vert_{L^\infty}
   \lec
    \Vert \omega\Vert_{L^\infty}+\Vert u\Vert_{1,\infty}
    .\label{EQ165}
 \end{align}
 Upon combining, \eqref{EQ160}, \eqref{EQ161}, and \eqref{EQ164},
 we employ \eqref{EQ165} to conclude~\eqref{EQ.main3}. 
 
 Having established the a~priori estimates, we may construct the unique solution
 in the following manner. We first approximate our initial data with a sequence satisfying
 \eqref{ourassumption2} and employ Theorem~\ref{T03}(ii) to obtain a sequence of approximate solutions.
 Note that the a~priori estimate established above applies to this sequence for some~$T>0$.
 Next, we show that the same $T$, which is independent of the approximation, 
 is a lower bound on maximum time of existence of the approximate solutions. 
 This is possible since the a~priori bounds linearizes the estimate on the conormal derivative of $\nabla u$;
 see~\eqref{EQ.Nor}. Finally, upon showing that the sequence of approximate solutions is Cauchy in $L^\infty(0,T;L^2(\Omega))$ we may pass to the limit. A detailed construction 
 will be addressed elsewhere.

 \colb
 \section*{Acknowledgments}
 The authors were supported in part by the
 NSF grant DMS-2205493.
 The research was performed during the program ``Mathematical Problems in Fluid Dynamics, Part~2,'' held during the summer of 2023 by the Simons Laufer Mathematical Sciences Institute (SLMath), which is supported by the National Science Foundation (Grant No.~DMS-1928930).

\end{document}